\documentclass[a4paper]{amsart}
\usepackage{a4wide}
\usepackage{amsmath,amssymb,amsthm,mathtools}
\usepackage[mathcal]{eucal}
\usepackage{xcolor}
\usepackage{url}
\usepackage[colorlinks=true,linkcolor=black,citecolor=black,urlcolor=blue]{hyperref}

\newtheorem{theorem}{Theorem}[section]
\newtheorem{lemma}[theorem]{Lemma}
\newtheorem{proposition}[theorem]{Proposition}
\newtheorem{corollary}[theorem]{Corollary}
\newtheorem*{theoremA}{Theorem A}
\newtheorem*{theoremB}{Theorem B}
\newtheorem*{theoremC}{Theorem C}
\newtheorem*{theoremD}{Theorem D}
\theoremstyle{remark}
\newtheorem{remark}[theorem]{Remark}

\DeclareMathOperator{\conv}{conv}

\newcommand{\D}{\mathbb D}
\newcommand{\T}{\mathbb T}
\newcommand{\C}{\mathbb C}
\newcommand{\N}{\mathbb N}
\newcommand{\calA}{\mathcal A}
\newcommand{\calB}{\mathcal B}
\newcommand{\calK}{\mathcal K}
\newcommand{\calQ}{\mathcal Q}
\newcommand{\calF}{\mathcal F}
\newcommand{\calS}{\mathcal S}
\newcommand{\eps}{\varepsilon}

\title[Spectral constants for numerical ranges]{Spectral constants for algebraic numerical ranges}
\author[T.~Kania]{Tomasz Kania}
\address[T.~Kania]{Mathematical Institute\\
Czech Academy of Sciences\\
\v Zitn\'a 25\\
115 67 Praha 1\\
Czech Republic
and
Institute of Mathematics and Computer Science\\
Jagiellonian University\\
{\L}ojasiewicza 6, 30-348 Krak\'{o}w, Poland
}
\email{kania@math.cas.cz, tomasz.marcin.kania@gmail.com}
\thanks{RVO: 67985840.}
\date{}
\subjclass[2020]{Primary 47A25, 47A12; Secondary 46H05, 46L05, 46B45, 47A60}
\keywords{Algebraic numerical range, spectral set, Banach algebra, $C^*$-algebra, Crouzeix constant, Jiang--Su algebra, polynomially bounded operator}

\begin{document}

\begin{abstract}
We study spectral-set constants associated with the algebraic numerical range
in unital Banach algebras.  If \(\Gamma_n\) denotes the universal constant
for algebraic elements of degree at most \(n\), then
\[
        \Gamma_1=1,
        \qquad
        2n-1\leqslant\Gamma_n<\infty\qquad(n\geqslant2).
\]
Thus the algebraic numerical range is a spectral set with a constant depending
only on the algebraic degree.  In degree two we obtain
\[
        3\leqslant\Gamma_2
        \leqslant\sqrt{1+(2\mathrm e-1)^2}<4.55.
\]
For unital \(C^*\)-algebras we prove a gap theorem: the algebra-level constant
is one precisely in the commutative case, whereas every non-commutative
algebra has constant at least two.  Moreover, the constant of the Jiang--Su
algebra is exactly the universal Crouzeix constant.  In the opposite
direction, we construct a norm-one operator with a contractive polynomial
calculus on the unit disc but infinite numerical-range spectral constant, and
we show that the canonical left shift on every spreading combinatorial space
in a broad class has infinite constant.  These results settle the three
questions posed by Blazhko, Homza, Schwenninger, de Vries and Wojtylak.
\end{abstract}

\maketitle

\section{Introduction}

Let \(\calA\) be a complex unital Banach algebra whose unit has norm one.
For \(a\in\calA\), the algebraic numerical range \(V(a,\calA)\) is obtained by
evaluating \(a\) on the states of \(\calA\).  We study the least constant in
the polynomial functional-calculus estimate
\[
        \|p(a)\|\leqslant C
        \sup_{z\in V(a,\calA)}|p(z)|.
\]
Equivalently, omitting polynomials for which the denominator vanishes, put
\[
        \Psi(a,\calA)=
        \sup_{p\in\C[z]}
        \frac{\|p(a)\|}{\sup_{z\in V(a,\calA)}|p(z)|}.
\]
The paper is organised around a rigidity--obstruction dichotomy.  Algebraic
relations and \(C^*\)-structure force finite numerical-range calculi, whereas
non-Hilbertian infinite-dimensional geometry may destroy them even in the
presence of a contractive polynomial calculus on the unit disc.

The questions originate in the work of Blazhko, Homza, Schwenninger, de Vries
and Wojtylak \cite{BHDVW}.  The results below are stated and proved as a
standalone theory, but they also settle the three questions in
\cite[Section~8]{BHDVW}.  The algebraic degree of an element means the degree
of its minimal polynomial.  For \(n\geqslant1\), define
\[
       \Gamma_n=
       \sup \{\Psi(a,\calA):\; \calA \text{ is a unital Banach algebra and }
       a\in\calA \text{ has algebraic degree at most }n\}.
\]
Our main positive result is the following.

\begin{theoremA}
One has
\[
       \Gamma_1=1,
       \qquad
       2n-1\leqslant\Gamma_n<\infty\quad(n\geqslant2).
\]
In particular, every fixed algebraic degree admits a universal
numerical-range spectral constant, and the same is true for every finite
matrix algebra equipped with an arbitrary unital submultiplicative norm.
\end{theoremA}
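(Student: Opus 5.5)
The plan has three parts: the trivial case \(n=1\), the finiteness of \(\Gamma_n\) (the heart of the theorem), and an explicit family of examples giving \(\Gamma_n\geqslant 2n-1\). The final clause about matrix algebras with an arbitrary unital submultiplicative norm follows from finiteness, since every element of \(M_k(\C)\) has algebraic degree at most \(k\).

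\emph{The case \(n=1\).} If \(a\) has degree one then \(a=\lambda 1\). Every state \(f\) satisfies \(f(a)=\lambda\), so \(V(a,\calA)=\{\lambda\}\). Moreover \(p(a)=p(\lambda)1\), hence \(\|p(a)\|=|p(\lambda)|\) and \(\Psi(a,\calA)=1\).

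\emph{Finiteness: reductions.} I first reduce to a finite-dimensional model. Let \(\calB\) be the unital subalgebra generated by \(a\); it is \(\C[z]/(m)\), where \(m\) is the minimal polynomial, of degree \(k\leqslant n\). Since \(1\in\calB\) and states on a unital subspace extend by Hahn--Banach, \(V(a,\calA)=V(a,\calB)\). Also \(\|p(a)\|\) is computed in \(\calB\). So it suffices to bound \(\Psi\) over all unital submultiplicative norms on \(\C[z]/(m)\) with \(\deg m\leqslant n\). Both \(\Psi\) and the setting are invariant under affine changes \(a\mapsto\alpha a+\beta\), so I normalise \(V(a)\) to have diameter \(1\) and to contain \(0\).

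\emph{Finiteness: key steps.}
\begin{enumerate}
\item \textbf{Semigroup bound.} Use the Lumer--Phillips estimate \(\|e^{za}\|\leqslant \exp\bigl(\sup_{w\in V(a)}\operatorname{Re}(zw)\bigr)\).
\item \textbf{Boundary eigenvalues are semisimple.} Write \(a=\sum_j(\lambda_je_j+n_j)\) (spectral idempotents plus nilpotents). If \(\lambda_j\) is a support point of \(V(a)\) in some direction and \(n_j\neq0\), then \(e^{t(a-\lambda_j)}\) would grow polynomially in \(t\) along that direction. This contradicts step 1. So any eigenvalue with a nontrivial nilpotent part lies in the relative interior of \(V(a)\). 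In the segment case \(a\) is, after an affine change, hermitian, and Sinclair's theorem \(\|h\|=r(h)\) kills the nilpotent part.
\item \textbf{Hermite interpolation.} Write
\[
p(a)=\sum_j\sum_{l}\frac{p^{(l)}(\lambda_j)}{l!}\,e_jn_j^l .
\]
Bound the derivatives by Cauchy estimates on discs of radius \(\delta_j=\operatorname{dist}(\lambda_j,\partial V(a))\).
\item \textbf{Matching bounds on the algebra elements.} It remains to show
\[
\|e_jn_j^l\|\leqslant C_n\,\delta_j^{\,l}.
\]
My plan is to express \(e_jn_j^l\) through contour integrals of the resolvent, or as combinations of \(e^{za}\) for finitely many \(z\), and then invoke step 1 together with Bohnenblust--Karlin, \(\|b\|\leqslant \mathrm e\,w(b)\).
\end{enumerate}

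\emph{Main obstacle.} Step 4 is where I expect difficulty. When eigenvalues cluster relative to the diameter of \(V(a)\), the individual idempotents \(e_j\) can be enormous, so the eigenvalue-by-eigenvalue decomposition is not uniform. I would instead work with the Newton form
\[
p(a)=\sum_k p[\mu_1,\dots,\mu_k]\prod_{i<k}(a-\mu_i),
\]
grouping clustered nodes together. Each divided difference would be bounded via a derivative bound on a slightly shrunk copy of \(V(a)\), and each product \(\prod_{i<k}(a-\mu_i)\) via Bohnenblust--Karlin applied to the factors.

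If the quantitative version resists, I would fall back on compactness. The admissible data (a monic \(m\) of degree at most \(n\) with roots in the normalised \(V\), together with a unital submultiplicative norm on \(\C[z]/(m)\)) form, after normalisation, a compact family. Steps 1--3 show that \(\Psi\) is finite at each point. The remaining work would then be to prove upper semicontinuity of \(\Psi\), which would be delicate precisely at degenerations where roots collide.

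\emph{Lower bound \(2n-1\).} I would look for a single explicit \(n\)-dimensional algebra \(\C[z]/(z^n)\), equipped with a weighted \(\ell^1\)-type norm. The weights should be chosen so that the numerical range of \(z\) is computable from the dual unit ball, which is a polydisc-type set. I would then exhibit a polynomial \(p\) of degree \(<n\) for which \(\|p(z)\|\) reaches \(2n-1\) times \(\sup_{V}|p|\); Fej\'er-type kernels are the first thing I would try. A first attempt to keep in mind: with plain \(\ell^1\) weights and \(V=\overline{\D}\), this ratio stays small (for \(n=2\) it is only \(5/4\)). So the weights, or a quotient norm, must be chosen so that \(V(z)\) is strictly smaller than the disc suggested by \(\|z\|\). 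Finding the right norm here is the second genuine difficulty.
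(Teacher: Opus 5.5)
\textbf{Uniform finiteness is not proved.} Your treatment of \(\Gamma_1=1\), the reduction to \(\C[z]/(m)\), and Steps~1--3 are sound. Together they show \(\Psi(a,\calA)<\infty\) for each \emph{individual} algebraic element. The theorem, however, asks for a bound depending only on \(n\), and here there is a genuine gap.

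Step~4 already fails for \(l=0\), as you note. The Newton-form repair fails for the same reason. When a tight cluster of eigenvalues sits near \(\partial V(a)\), no disc around the cluster lies in \(V(a)\), so the divided differences are not controlled by \(\sup_{V(a)}|p|\). For the paper's matrices \(A_\delta\) they are of order \((\eps_{i+1}\cdots\eps_n)^{-1}\). Nor do Bohnenblust--Karlin and submultiplicativity supply compensating smallness for the factors \(\prod(a-\mu_i)\); they only give \(\|a-\mu\|\leqslant\mathrm e\) in your normalisation.

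The missing ingredient is a norm dichotomy in which you normalise the \emph{roots}, not \(V(a)\): send two roots at maximal distance to \(0\) and \(1\).
\begin{itemize}
\item If \(\|T\|\) is large, a ratio-drop argument along the orbit \(x_j=T^jx_0\) puts a disc of radius \(\delta_n\|T\|\), centred at \(0\), into \(V(T)\). Because all roots lie in \(\overline\D\), the relation \(q(T)x_0=0\) forces a first index at which \(\|x_{j+1}\|/\|x_j\|\) drops by a fixed factor. The support-function formula is then applied to \(\|v+r\zeta w\|\), with \(v,w\) the normalised consecutive orbit vectors. Cauchy's divided-difference formula on that circle, together with \(\|\prod_{i\leqslant j}(T-\lambda_i)\|\leqslant(\|T\|+1)^j\), bounds the Newton--Hermite calculus. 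The one-root case \(T^n=0\) uses the same disc.
\item If \(\|T\|\) is bounded, a minimum spanning tree splits the roots into two clusters at distance at least \(1/(n-1)\). The B\'ezout idempotent \(v(T)q_1(T)\) has norm controlled by the Sylvester resultant, Cramer's rule, Hadamard's inequality and the bound on \(\|T\|\). The restrictions of \(T\) to its two invariant ranges have lower degree and numerical ranges inside \(V(T)\), so induction on the degree closes.
\end{itemize}
Your compactness fallback leaves exactly this uniformity open. At colliding roots the limiting norms are only seminorms: for \(a=\operatorname{diag}(0,1,\lambda)\in\ell_\infty^3\) one has \(\|a^2-a\|\to0\) as \(\lambda\to0\), although \(a^2-a\neq0\) in \(\C[z]/(z^2(z-1))\). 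Semicontinuity of \(\Psi\) there is the whole difficulty.

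\textbf{The lower bound is not established, and the proposed search space cannot reach it.} Take a diagonal weighted norm \(\|\sum_{k<n}c_kz^k\|=\sum_kw_k|c_k|\) on \(\C[z]/(z^n)\); quotients of weighted \(\ell^1\) by \((z^n)\) have this form. A state may take any value of modulus at most \(w_k\) on \(z^k\), so \(V(z)=w_1\overline\D\) exactly. Since \(w_k\leqslant w_1^k\) and \(\sum_k|c_k|^2w_1^{2k}\leqslant\sup_{w_1\overline\D}|p|^2\), the ratio never exceeds \(\sqrt n<2n-1\). Your \(5/4\) for \(n=2\) is the Schwarz--Pick maximum of \(s+1-s^2\). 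Such data see only Taylor coefficients at one point.

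The paper's mechanism is interpolation at \emph{distinct} eigenvalues accumulating at a boundary point of \(V\).
\begin{itemize}
\item For \(n=2\), the idempotent \(P=\bigl(\begin{smallmatrix}1&1\\0&0\end{smallmatrix}\bigr)\) on \(\ell_1^2\) has \(V(P)=\overline\D\) by column Gershgorin. The M\"obius maps \(g_\rho(z)=(z-\rho)/(1-\rho z)\) satisfy \(g_\rho(1)=1\) and \(g_\rho(0)=-\rho\), so \(\|g_\rho(P)\|=1+2\rho\to3\).
\item For general \(n\), take the upper bidiagonal \(A_\delta\) on \(\ell_1^n\) with diagonal entries \(1-\eps_j\), where \(\eps_1=0\) and \(\eps_j=\delta^{n-j}\) for \(2\leqslant j\leqslant n\), and superdiagonal entry \(\eps_j\) in column \(j\). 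Every column Gershgorin disc then lies in \(\overline\D\), so \(V(A_\delta)=\overline\D\).
\item Use a Blaschke product with zeros \(1-\eta_j\), each \(\eta_j\) on a scale strictly between \(\eps_j\) and \(\eps_{j+1}\). It takes nearly alternating values \((-1)^{i-1}\) at the eigenvalues. The bidiagonal divided-difference formula turns these into last-column entries tending to \(2(-1)^{i-1}\) for \(i<n\) and \((-1)^{n-1}\) for \(i=n\). Their \(\ell_1\)-sum tends to \(2n-1\).
\end{itemize}
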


The degree-two argument is considerably sharper.

\begin{theoremB}
One has
\[
       3\leqslant \Gamma_2
       \leqslant\sqrt{1+(2\mathrm e-1)^2}<4.55.
\]
\end{theoremB}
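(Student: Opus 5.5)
Both bounds of Theorem~B concern elements of algebraic degree at most two; the lower bound \(\Gamma_2\geqslant3\) is the case \(n=2\) of the lower bound \(2n-1\leqslant\Gamma_n\) in Theorem~A, so it suffices to prove the upper bound. The plan is to reduce the general degree-two element to a normal form, compute \(p(a)\) explicitly, and bound it using the Bohnenblust--Karlin inequality between the norm and the numerical radius.

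\emph{Normal form.} Let \(a\) have minimal polynomial of degree two. Since \(V(a+\lambda\mathbf 1,\calA)=V(a,\calA)+\lambda\), and similarly under multiplication by nonzero scalars, and \(\Psi\) is invariant under these affine changes, we may normalise \(a\). There are two cases. In the first, \(a=\lambda+N\) with \(N^2=0\) and \(N\neq0\). In the second, \(a\) has two distinct eigenvalues, which we normalise to \(\pm1\); then \(a=P-Q\), where \(P=(\mathbf 1+a)/2\) and \(Q=\mathbf 1-P\) are complementary idempotents.

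\emph{Polynomial calculus.} For every polynomial \(p\) the calculus reduces to a two-term formula:
\[
p(\lambda+N)=p(\lambda)\mathbf 1+p'(\lambda)N,
\qquad
p(a)=p(1)P+p(-1)Q=\tfrac{p(1)+p(-1)}2\mathbf 1+\tfrac{p(1)-p(-1)}2\,a .
\]
Write \(K=V(a,\calA)\) and \(M=\sup_K|p|\). In both cases we must bound \(\|p(a)\|\) by a multiple of \(M\). The available tools are
\begin{itemize}
\item the spectrum lies in \(K\);
\item the inclusion \(V(\alpha\mathbf 1+\beta a)=\alpha+\beta V(a)\);
\item the estimate \(\|x\|\leqslant \mathrm e\,v(x)\), where \(v\) denotes the numerical radius.
\end{itemize}

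\emph{Nilpotent case.} Here \(K\) is a compact convex set containing \(\lambda\). The key step is to show that \(K\) contains a disc around \(\lambda\) whose radius is comparable to \(v(N)\), and hence to \(\|N\|/\mathrm e\). For the model case \(N^2=0\) on Hilbert space, \(V(N)\) is the disc of radius \(\|N\|/2\). For a general algebra I would argue as follows. Fix a state \(\varphi\) with \(\varphi(N)=v(N)\). Rotating \(N\) by unimodular scalars keeps \(\lambda+e^{i\theta}N\) of the same form, and convexity gives a point of \(K\) at distance of order \(v(N)\) from \(\lambda\) in every direction. Since \(\lambda\in K\), convexity then yields a disc. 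Cauchy's estimate on this disc controls \(|p'(\lambda)|\), and so \(\|p(a)\|\leqslant M+|p'(\lambda)|\,\|N\|\) is a constant multiple of \(M\). The hard part is checking that this disc genuinely lies inside \(K\) rather than only inside the union of the rotated numerical ranges. If rotation invariance fails, I would instead use the spectral-radius identity applied to the exponential \(\exp(zN)=\mathbf 1+zN\) to see that \(K\) is symmetric in the relevant sense.

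\emph{Idempotent case.} The quantity to control is
\[
\|p(a)\|\leqslant \Bigl|\tfrac{p(1)+p(-1)}2\Bigr|+\Bigl|\tfrac{p(1)-p(-1)}2\Bigr|\,\|a\| .
\]
The naive bound \(M(1+\|a\|)\) fails, because \(\|a\|\) can be arbitrarily large. The fix is a refined estimate for \(x=\alpha\mathbf 1+\beta a\): take real and imaginary parts relative to \(K\), bounding one component trivially by \(M\) and the other by \(2\mathrm e-1\) times the corresponding width, using Bohnenblust--Karlin together with the fact that \(V(x)\) is the segment-like image \(\alpha+\beta K\). Combining the two orthogonal contributions in quadrature should give \(\sqrt{1+(2\mathrm e-1)^2}\). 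The factor \(2\mathrm e-1\) is where I expect the main obstacle: it should come from \(\|x-\xi\mathbf 1\|\leqslant \mathrm e\,v(x-\xi\mathbf 1)\) plus one triangle inequality. Making the geometry of \(K\) relative to \(\pm1\) produce exactly this constant, and not something worse, will require care. The nilpotent case should be absorbed as a limit of this case, which keeps the final constant unchanged.
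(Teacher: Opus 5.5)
Your lower bound is the paper's own argument (the case $n=2$ of Proposition~\ref{prop:gamma-n-lower}). The upper bound, however, has a genuine gap, and it sits in the involution case. Let $U^2=1$, write $p(U)=\alpha1+\beta U$, and put $\mu=\sup_{V(U)}|p|$. The whole difficulty is to show $|\beta|\,\|U\|\lesssim\mu$ when $\|U\|$ is large, and your plan has no mechanism for this.

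Bohnenblust--Karlin applied to $p(U)$ only gives $\|p(U)\|\leqslant\mathrm e\sup_{z\in V(U)}|h(z)|$, where $h(z)=\alpha+\beta z$ is the linear interpolant of $p$ at $\pm1$. For a general compact convex set containing $\pm1$, $\sup|h|$ is not controlled by $\sup|p|$. On $\conv\{-1,1,\mathrm iR\}$, take polynomials approximating a conformal map of the triangle onto $\D$ (extended to the closure) that sends $\pm1$ to $\pm1$. They have sup norm close to $1$, while $h(z)\approx z$ has sup close to $R$. So you must use information about the shape of $V(U)$ that is specific to involutions. Your step combining ``two orthogonal contributions in quadrature'' is fitted to the target constant rather than derived, and $2\mathrm e-1$ does not come from $\|x-\xi1\|\leqslant\mathrm e\,v(x-\xi1)$.

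Two ingredients are missing. (i) Lemma~\ref{lem:involution-fat}: $r\overline\D\subseteq V(U)$ with $r=\|U\|/(2\mathrm e-1)$ whenever $\|U\|>2\mathrm e-1$. The paper proves this by a differential inequality for $f(r)=\|x+r\zeta y\|$ with $y=Ux/\|Ux\|$; $2\mathrm e-1$ is exactly the threshold at which the integrated inequality contradicts $f(2\mathrm e-1)\geqslant2\mathrm e-2$. (ii) Two-point Pick interpolation for $p(r\,\cdot)/\mu$ at $\pm1/r$, which gives $|\alpha|^2+r^2|\beta|^2\leqslant\mu^2$. The quadrature is then just Cauchy--Schwarz: $|\alpha|+\|U\|\,|\beta|\leqslant\sqrt{1+\|U\|^2/r^2}\,\mu=\sqrt{1+(2\mathrm e-1)^2}\,\mu$. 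When $\|U\|\leqslant2\mathrm e-1$, one uses $|p(\pm1)|\leqslant\mu$ instead, i.e.\ $|\alpha|^2+|\beta|^2\leqslant\mu^2$, and obtains $\sqrt{1+\|U\|^2}\,\mu$.

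Your nilpotent paragraph also fails as written. $V(\lambda+\mathrm e^{\mathrm i\theta}N)=\lambda+\mathrm e^{\mathrm i\theta}V(N)$ is a different set, so rotating $N$ tells you nothing about $V(N)$ in other directions; $v(N)$ supplies only one far point. For example, $N=\begin{pmatrix}1&1\\-1&-1\end{pmatrix}$ on $\ell_1^2$ has $N^2=0$ and $v(N)=\|N\|=2$. Yet by the column Gershgorin description, $V(N)=\conv\bigl((1+\overline\D)\cup(-1+\overline\D)\bigr)$ contains no disc about $0$ of radius larger than $1$.

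Your exponential idea does work if you use it as a growth bound rather than a symmetry. Since $\|1+t\zeta N\|=\|\exp(t\zeta N)\|\leqslant\exp\bigl(t\max_{w\in V(N)}\operatorname{Re}(\zeta w)\bigr)$ and $\|1+t\zeta N\|\geqslant t\|N\|-1$, the support function is at least $\kappa\|N\|$ in every direction, with $\kappa=\sup_{R>2}\log(R-1)/R$. This is the paper's Lemma~\ref{lem:nilpotent-fat}, with the same constant. Even so, Cauchy's estimate on this disc gives only $1+1/\kappa\approx4.59$, which exceeds $\sqrt{1+(2\mathrm e-1)^2}\approx4.548$. You need Schwarz--Pick, $|p'(0)|\leqslant(\mu^2-|p(0)|^2)/(\mu\kappa\|N\|)$, to reach $1/\kappa+\kappa/4<3.7$.

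Your alternative of absorbing this case as a limit is legitimate. Pass to $L_N$ on the two-dimensional algebra $\C[N]$, where $L_N+\eps(2P-I)$, with $P$ a projection onto $\ker L_N$, squares to $\eps^2I$; then use that $V$ is $1$-Lipschitz for the Hausdorff metric. But this only helps once the involution case has actually been proved.
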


At the algebra level, \(C^*\)-structure yields a different rigidity
phenomenon.  Let \(C_{\mathrm{Cr}}\) denote the universal Crouzeix constant.

\begin{theoremC}
For every unital \(C^*\)-algebra \(\calA\),
\[
       \Psi_{\calA}<2
       \quad\Longleftrightarrow\quad
       \Psi_{\calA}=1
       \quad\Longleftrightarrow\quad
       \calA\text{ is commutative}.
\]
If \(\calA\) is non-commutative, then
\[
       2\leqslant\Psi_{\calA}\leqslant C_{\mathrm{Cr}}
       \leqslant1+\sqrt2.
\]
For the Jiang--Su algebra \(\mathcal Z\), one has the exact identification
\[
       \Psi_{\mathcal Z}=C_{\mathrm{Cr}}.
\]
\end{theoremC}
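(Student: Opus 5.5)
We need to prove Theorem C. Here \(\Psi_{\calA}=\sup_{a\in\calA}\Psi(a,\calA)\), and for a \(C^*\)-algebra the algebraic numerical range \(V(a,\calA)\) is the closure of the spatial numerical range \(W(\pi(a))\) in any faithful representation \(\pi\). The plan has four parts: an upper bound via Crouzeix, commutativity giving constant one, a lower bound of two in the non-commutative case, and a matching lower bound for \(\mathcal Z\).

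\emph{Upper bound.} Embed \(\calA\) faithfully into \(B(H)\). Since \(\|p(a)\|=\|\pi(p(a))\|\) and the numerical range closure is preserved, Crouzeix's theorem (with the Crouzeix--Palencia bound) gives
\[
\|p(a)\|\leqslant C_{\mathrm{Cr}}\sup_{\overline{W(a)}}|p|,\qquad C_{\mathrm{Cr}}\leqslant 1+\sqrt2 .
\]
Hence \(\Psi_{\calA}\leqslant C_{\mathrm{Cr}}\) for every unital \(C^*\)-algebra.

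\emph{Commutative case.} If \(\calA=C(K)\), states are probability measures, so \(V(a)=\conv a(K)\supseteq\sigma(a)=a(K)\). Then
\[
\|p(a)\|=\sup_K|p\circ a|\leqslant\sup_{V(a)}|p|,
\]
so \(\Psi_{\calA}=1\); the value \(1\) is attained by scalars.

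\emph{Non-commutative lower bound.} This is the key step. A non-commutative \(C^*\)-algebra contains a nonzero \(x\) with \(x^2=0\), since it is not the case that all elements are normal, and the standard argument via a non-commutative hereditary subalgebra produces a nilpotent. I would first take \(x\) of norm one with \(x^2=0\). Then \(V(x)\) is contained in the closed disc of radius \(1/2\), because \(w(x)=\|x\|/2\) for square-zero elements. With \(p(z)=z\),
\[
\frac{\|x\|}{\sup_{V(x)}|z|}\geqslant\frac{1}{1/2}=2 .
\]
So \(\Psi_{\calA}\geqslant2\), giving \(\Psi_{\calA}\in\{1\}\cup[2,C_{\mathrm{Cr}}]\) and all three equivalences. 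The obstacle is producing the square-zero element in a general non-commutative \(C^*\)-algebra. I would argue as follows. If every element is normal, the algebra is commutative by Fuglede-type reasoning: \(a=h+ik\) is normal iff \(hk=kh\), and normality of all \(h+ik\) forces self-adjoint elements to commute. So some self-adjoint \(h,k\) fail to commute. Then, by Kaplansky's result, a \(C^*\)-algebra with no nonzero nilpotents is commutative, which yields \(x\ne0\) with \(x^2=0\).

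\emph{Jiang--Su identification.} I need \(\Psi_{\mathcal Z}\geqslant C_{\mathrm{Cr}}\). The constant \(C_{\mathrm{Cr}}\) is approached by matrices \(T\in M_k\), and by a compactness and Schur-form argument it is attained on finite matrices up to \(\eps\). The dimension-drop algebras inside \(\mathcal Z\) contain unital copies of \(M_k\otimes\)(functions) at interior points; more simply, \(\mathcal Z\) admits unital embeddings of \(M_k\) only approximately. I would use instead that \(\mathcal Z\) contains, for every \(k\), a unital copy of \(M_k\oplus M_{k+1}\), or at least a unital \(Z_{k,k+1}\) whose evaluation near the interior gives \(M_k\otimes M_{k+1}\). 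Then I place \(T\oplus\lambda\)-type blocks, for instance \(T\otimes 1_{k+1}\) at the interior. The subtlety is unitality versus numerical range: an extra block enlarges \(V\) only by points already in \(\overline{W(T)}\) if I use \(T\otimes1\) on \(M_k\otimes M_{k+1}\) and \(1\otimes T'\) suitably on the endpoint summands. This forces an element of the form \(t\mapsto T_t\) built from the block pieces, each with numerical range inside \(\overline{W(T)}\), while the norm \(\|p(T)\|\) is achieved at an interior point. Then \(\Psi(a,\mathcal Z)\geqslant\|p(T)\|/\sup_{W(T)}|p|\), which is arbitrarily close to \(C_{\mathrm{Cr}}\). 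The main obstacle is checking that the unital embedding of \(Z_{k,k+1}\) into \(\mathcal Z\) preserves numerical ranges; it does, since states extend along unital \(C^*\)-inclusions.
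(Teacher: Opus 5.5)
Your outline matches the paper's: Crouzeix--Palencia transported through a faithful representation, the functional calculus in the commutative case, a square-zero element tested on \(p(z)=z\), and unital embeddings \(Z_{k,k+1}\hookrightarrow\mathcal Z\) combined with subalgebra invariance. The first three parts are sound. Quoting the theorem that a \(C^*\)-algebra without non-zero nilpotents is commutative is a fine substitute for the paper's reference; the detour through non-normal elements adds nothing. That theorem only gives a nilpotent \(y\) with \(y^n=0\neq y^{n-1}\), so take \(x=y^{n-1}\). The identity \(w(x)=\|x\|/2\) is the one computation the paper actually carries out, and you should include it: \(4\bigl(\operatorname{Re}(\mathrm e^{\mathrm i\theta}x)\bigr)^2=xx^*+x^*x\), whose summands are orthogonal positive elements. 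Hence \(\|\operatorname{Re}(\mathrm e^{\mathrm i\theta}x)\|=\|x\|/2\) for every \(\theta\).

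The Jiang--Su part needs repair in two places.

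First, \(C_{\mathrm{Cr}}\) is defined over all Hilbert-space operators, so the reduction \(C_{\mathrm{Cr}}=\sup_kC_k\) is indispensable. ``A compactness and Schur-form argument'' does not prove it. The paper uses a Krylov compression. Take \(T\in\calB(H)\), \(p\) of degree \(m\), and a unit vector \(\xi\) almost norming \(p(T)\). Put \(E=\operatorname{span}\{\xi,T\xi,\ldots,T^m\xi\}\) and \(A=P_ET|_E\). Then \(A^k\xi=T^k\xi\) for \(k\leqslant m\), so \(p(A)\xi=p(T)\xi\), while \(W(A)\subseteq W(T)\) and \(\dim E\leqslant m+1\).

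Second, delete the suggestions that \(\mathcal Z\) contains unital copies of \(M_k\oplus M_{k+1}\) or of \(M_k\). Since \(\mathcal Z\) is projectionless, neither exists; for instance, \(1\oplus0\) would be a non-trivial projection.

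Your construction inside \(Z_{k,k+1}\) does work once it is written out. For \(T\in M_k\) and \(\lambda\in W(T)\), take
\[
 f(t)=(1-t)\,(T\otimes1_{k+1})+t\lambda1 .
\]
Then \(f(0)\in M_k\otimes1\) and \(f(1)\in1\otimes M_{k+1}\). By convexity, \(W(f(t))=(1-t)W(T)+t\lambda\subseteq W(T)\). Therefore
\[
V(f,\mathcal Z)=V(f,Z_{k,k+1})=V(f,C([0,1],M_{k(k+1)}))=\conv\bigcup_tW(f(t))\subseteq W(T),
\]
and \(\|p(f)\|\geqslant\|p(f(0))\|=\|p(T)\|\). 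This gives \(\Psi_{\mathcal Z}\geqslant C_k\) for every \(k\), which together with the finite-dimensional reduction suffices.

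The paper instead uses \(f(t)=\lambda I+h(t)(A-\lambda I)\), with an arbitrary \(A\in M_{rs}\) and a bump \(h\) vanishing at both endpoints. That yields the sharper identity \(\Psi_{Z_{r,s}}=C_{rs}\), but it then needs monotonicity of \((C_d)\) to pass along the sizes \(n(n+1)\). Your amplification \(T\otimes1_{k+1}\) reaches every matrix size directly.
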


The corresponding negative phenomena are already visible in constructions built from canonical shifts.

\begin{theoremD}
\begin{enumerate}
\item There are a Banach space \(X\) and an operator \(T\in\calB(X)\) with
      \(\|T\|=1\) such that
      \[
             \|p(T)\|\leqslant\sup_{|z|\leqslant1}|p(z)|
             \qquad(p\in\C[z]),
      \]
      but \(\Psi(T,\calB(X))=\infty\).
\item For every spreading family of finite subsets of \(\N\) which covers
      \(\N\) and contains sets of arbitrarily large cardinality, the canonical
      left shift on the associated combinatorial space is a contraction and
      has infinite numerical-range spectral constant.
\end{enumerate}
\end{theoremD}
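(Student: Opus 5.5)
We prove the two parts of Theorem~D separately. Both rest on one elementary observation. Since $\|T\|=1$, the numerical range $V(T,\calB(X))$ lies in the closed unit disc. So $\Psi(T,\calB(X))=\infty$ will follow once $V(T,\calB(X))$ is a \emph{small} compact set $K$ with empty interior, for instance a segment. On such a $K$ polynomials can be uniformly bounded while $\|p(T)\|$ grows without bound. Concretely, we aim to show that $V(T)\subseteq[-1,1]$, or at least that $V(T)$ lies in a thin lens around a segment. We then exhibit polynomials $p_k$ with $\sup_{K}|p_k|\leqslant 1$ but $\|p_k(T)\|\to\infty$. The natural candidates are Chebyshev-type polynomials $p_k$ of degree $k$ that are bounded by $1$ on $[-1,1]$ and have large leading coefficient ($2^{k-1}$). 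Applied to a shift, $\|p_k(T)\|$ then sees these large coefficients.

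\emph{Part (2).} Let $\calF$ be the spreading family and $X_\calF$ the completion of $c_{00}$ under
\[
\|x\|=\sup_{F\in\calF}\sum_{i\in F}|x_i|.
\]
Let $L$ be the left shift. Spreading gives $\|L\|\leqslant 1$, since shifting a set to the right keeps it in $\calF$.

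To control $V(L)$ we use the Lumer--Bohnenblust--Karlin description: $V(L)$ is the closed convex hull of the values $f(Lx)$ over norm-one $x$ and norm-one functionals $f$ with $f(x)=1$. Norming functionals are essentially $\pm$-signed indicator sums over a set $F\in\calF$ that is maximal for $x$. The key step is a bound of the form $\mathrm{Re}\, f(Lx)\leqslant\dots$, showing that $V(L)$ meets the unit circle only near at most finitely many points. Since the family contains sets of arbitrarily large size, this is really a bound on $w(L)$ along suitable directions, so that $V(L)$ misses an arc or lies in a polygon strictly inside the disc with $0$ on its boundary.

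The unboundedness comes from the $\ell_1$-behaviour on large sets. For $x=e_{n+1}+\dots+e_{n+k}$ supported on $F\in\calF$, one computes $\|p(L)x\|$ and finds that it is comparable to $\sum_j|c_j|$, the $\ell_1$-norm of the coefficients, up to the overlap structure. Taking $p_k$ as the Faber or Chebyshev polynomials of the compact set $V(L)$, whose coefficient $\ell_1$-norms grow exponentially while $\sup_{V(L)}|p_k|$ stays bounded, yields $\Psi=\infty$.

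\emph{Part (1).} Here we need a contractive von~Neumann-type inequality on the disc, so a combinatorial $\ell_1$-type norm alone will not do. We take $X$ to be the closure of the polynomials in a Banach algebra norm on $\C[z]$ that dominates the sup-norm on $\overline{\D}$ and is submultiplicative. Examples are a weighted Wiener algebra, or directly $X=A(\D)$ with a renormed multiplication operator $T=M_z$ on a suitable interpolation space. The condition $\|p(T)\|\leqslant\|p\|_\infty$ then follows if the norm is an $A(\D)$-module norm. We would try the following: let $X$ be $A(\D)$ renormed by
\[
\|f\|=\max\bigl(\|f\|_\infty,\ \sup_j \|f\|_{Y_j}\bigr),
\]
where the $Y_j$ are chosen so that $M_z$ keeps norm $1$ and polynomial-contractivity holds, but the states of $\calB(X)$ evaluated at $T$ collapse toward a segment.

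\emph{Main obstacle.} The main difficulty is the numerical-range computation: proving that $V(T,\calB(X))$, the algebraic numerical range over all states of $\calB(X)$, genuinely has empty interior or misses an arc. This is compatible with contractivity on the disc because the algebraic numerical range need not contain the spectrum's polynomial hull in any norm-controlled way. We first try to arrange the family so that $\|I+tT\|=1+o(t)$ for $t$ in an open cone of directions, which forces $V(T)$ into a half-plane. We then intersect several such half-planes.
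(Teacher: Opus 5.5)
There is a genuine gap. Your plan rests on showing that $V(T)$ is thin (a segment, a lens, or a set missing an arc). In part~(2) this is false. For $|\lambda|<1$ the geometric sequence $x_\lambda=(1,\lambda,\lambda^2,\ldots)$ lies in $\ell_1\subseteq\calS$, because $\|x\|_{\calS}\leqslant\|x\|_1$, and it satisfies $Lx_\lambda=\lambda x_\lambda$. Hence $\D\subseteq\sigma(L)\subseteq V(L)\subseteq\overline\D$, that is, $V(L)=\overline\D$.

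The Chebyshev and Faber polynomials of $\overline\D$ are just $z^k$, so there is no exponential coefficient growth. In fact, if $\deg p<n$, then $\|p(L)\|\leqslant\sum_j|c_j|\leqslant\sqrt n\,\|p\|_{L^2(\T)}\leqslant\sqrt n\sup_{V(L)}|p|$. The only room is therefore a $\sqrt n$ gap. The missing ingredient is a family realising it: the Rudin--Shapiro polynomials $q_n$, which have $n$ unimodular coefficients and satisfy $\sup_{\overline\D}|q_n|\leqslant\sqrt{6n}$.

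Your $\ell_1$ observation is the right complement, but it must be set up correctly. Spreading turns any $\{k_1<\cdots<k_n\}\in\calF$ into the consecutive block $B_n=\{k_n,\ldots,k_n+n-1\}\in\calF$. On that block $\calS$ is isometrically and $1$-complementedly $\ell_1^n$, and $L$ compresses to the nilpotent shift $N_n$. Evaluate $q_n(N_n)$ at the \emph{last} basis vector of the block. The indicator of the block, which you use, only produces partial sums of coefficients. This gives $\|q_n(L)\|\geqslant n$ and a ratio of at least $\sqrt{n/6}$. Only the trivial inclusion $V(L)\subseteq\overline\D$ is needed.

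In part~(1) you give no actual construction, and the candidate you sketch cannot work. Suppose $T=M_z$ on a completion of $\C[z]$ under a norm dominating $\sup_{\overline\D}|\cdot|$. For $|\lambda|\leqslant1$, evaluation $f\mapsto f(\lambda)$ is a bounded functional vanishing on the range of $T-\lambda$. So $\overline\D\subseteq\sigma(T)\subseteq V(T)$, and together with the contractive disc calculus this forces $\Psi(T)\leqslant1$.

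Your remark that $V(T)$ need not contain the polynomial hull of the spectrum is wrong: $V(T)$ is convex and contains $\sigma(T)$. A weighted Wiener norm with weights $\geqslant1$ fails even earlier, since then $\|q_n(M_z)\|\geqslant n>\sqrt{6n}$ for $n\geqslant7$.

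The missing idea is to make the numerical range a strictly smaller disc than the one carrying the contractive calculus. The paper takes $T=E\oplus\tfrac12L$ on $\ell_2^2\oplus_\infty\ell_3$, where $E$ is the $2\times2$ Jordan block.
\begin{itemize}
\item $\|T\|=1$, and the direct-sum formula gives $V(T)=\tfrac12\overline\D$.
\item Contractivity of $q\mapsto q(\tfrac12L)$ on $\ell_p$ comes from Stein interpolation. Bohr's inequality on $\ell_1$ and $\ell_\infty$ gives radius $1/3$, and von Neumann's inequality on $\ell_2$ gives radius $1$. Interpolating yields radius $3^{-|1-2/p|}$, which exceeds $1/2$ for $p=3$.
\item $p_n(z)=q_n(2z)$ satisfies $\sup_{V(T)}|p_n|\leqslant\sqrt{6n}$, while $\|p_n(T)\|\geqslant\|q_n(L)\|_{\ell_p}\geqslant n^{\max\{1/p,1/p'\}}$. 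This beats $\sqrt n$ because $p\neq2$.
\end{itemize}
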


The proof of Theorem~A has two complementary parts.  In the large-norm regime,
a ratio-drop argument forces a disc of radius comparable to the operator norm
inside the algebraic numerical range; Cauchy's divided-difference formula then
controls the Newton--Hermite functional calculus.  In the bounded-norm regime,
the roots of the minimal polynomial are divided into uniformly separated
clusters.  A quantitatively controlled B\'ezout projection reduces the
algebraic degree and closes an induction.  The lower bound is produced by
scale-separated upper bidiagonal matrices on \(\ell_1^n\), which convert
almost alternating Schur interpolation data into a last column of asymptotic
norm \(2n-1\).

Theorem~C provides the conceptual endpoint of the positive part of the paper.  Every non-commutative \(C^*\)-algebra contains a non-zero
square-zero element, which forces the lower obstruction two.  The Jiang--Su
algebra, introduced in \cite{JiangSu}, is a natural single-algebra test because
its prime dimension-drop subalgebras detect matrix Crouzeix constants of
unbounded size; the required embeddings are supplied by
\cite[Theorem~2.2 and Proposition~3.3]{RordamWinter}.

Theorem~D sharpens the two relevant constructions in \cite{BHDVW}.  Their
Example~6.2 gives essentially the same direct-sum operator with polynomial
bound \(2\sqrt3/3\); the interpolation argument here lowers that bound to the
optimal value one.  Their Theorem~6.3 proves
\(\Psi_{\calB(\calS)}=\infty\) by using a family of cut shifts.  Here the
single canonical left shift itself has infinite constant, which answers the
individual-operator question.

Section~2 records the numerical-range, interpolation and divided-difference
facts used throughout.  Section~3 proves Theorems~A and~B, Section~4 proves
Theorem~C, and Section~5 contains the two counterexamples in Theorem~D.
Further shift consequences are collected in Section~6.  The explicit cubic
bookkeeping and the bounded-order root-of-unity estimate are placed in the
appendix, since they are quantitatively useful but not needed for the main
conceptual narrative.

\section{Preliminaries}

Throughout, Banach algebras are complex and unital, and the unit has norm one.  We use the duality convention
\[
       \langle x,x^*\rangle=x^*(x)\qquad(x\in X,\ x^*\in X^*)
\]
for Banach spaces and their duals; the same notation is used for elements of a Banach algebra and functionals on it.  For \(a\in\calA\) we write
\[
 V(a,\calA)=\{\langle a,\varphi\rangle:\varphi\in\calA',\ \|\varphi\|=1=\langle 1,\varphi\rangle\}
\]
for the algebraic numerical range and
\[
 \Psi(a,\calA)=
 \sup_{\substack{p\in\C[z]\\ \sup_{z\in V(a,\calA)}|p(z)|>0}}
 \frac{\|p(a)\|}{\sup_{z\in V(a,\calA)}|p(z)|}.
\]
Thus scalar elements have constant one.  For a unital Banach algebra put
\[
       \Psi_{\calA}=\sup_{a\in\calA}\Psi(a,\calA).
\]
The standard facts about algebraic numerical ranges used below may be found in
\cite[Chapter~1]{BonsallDuncan}; see also \cite[Section~2]{BHDVW}.  In
particular, \(V(a,\calA)\) is compact and convex and contains the spectrum of
\(a\).  If \(b=\alpha a+\beta1\) with \(\alpha\neq0\), then
\(V(b)=\alpha V(a)+\beta\) and \(\Psi(b)=\Psi(a)\).  We shall also use the
invariance under unital subalgebras: if \(\calB\subseteq\calA\) has the same
unit, then
\[
       V(b,\calB)=V(b,\calA)\qquad(b\in\calB).
\]
Indeed, states restrict to states, and states extend by Hahn--Banach.  We shall
repeatedly use the support-function formula
\begin{equation}\label{eq:support}
       \sup_{w\in V(a)}\operatorname{Re}(\zeta w)=\lim_{t\downarrow0}\frac{\|1+t\zeta a\|-1}{t}\qquad(\zeta\in\T).
\end{equation}
If \(a\in A\) and \(b\in B\), where \(A\oplus_\infty B\) is the unital
\(\ell_\infty\)-direct sum, then
\begin{equation}\label{eq:diagonal-sum}
       V(a\oplus b,A\oplus_\infty B)
       =\conv\bigl(V(a,A)\cup V(b,B)\bigr).
\end{equation}
Indeed,
\[
 \|1+t\zeta(a\oplus b)\|
 =\max\{\|1+t\zeta a\|,\|1+t\zeta b\|\},
\]
so \eqref{eq:support} identifies the support function on the left with the
maximum of the two support functions on the right.  This is the support
function of the indicated convex hull.  Compare the same direct-sum
calculation in \cite[Example~6.2]{BHDVW}.
\begin{lemma}\label{lem:support-containment}
Let \(K,L\subseteq\C\) be compact and convex.  If
\[
       \sup_{w\in K}\operatorname{Re}(\zeta w)
       \geqslant
       \sup_{w\in L}\operatorname{Re}(\zeta w)
       \qquad(\zeta\in\T),
\]
then \(L\subseteq K\).  In particular, if
\(\sup_{w\in K}\operatorname{Re}(\zeta w)\geqslant r\) for every \(\zeta\in\T\),
then \(r\overline\D\subseteq K\).
\end{lemma}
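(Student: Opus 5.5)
The argument is the planar separating-hyperplane theorem, written with the identification \(\C\cong\mathbb R^2\) under which the real-linear functionals on \(\C\) are exactly the maps \(w\mapsto\operatorname{Re}(\zeta w)\), rescaled by positive factors. Write \(h_K(\zeta)=\sup_{w\in K}\operatorname{Re}(\zeta w)\) for \(\zeta\in\T\). We argue by contradiction, assuming that some \(w_0\in L\) does not lie in \(K\). Since \(K\) is compact, convex and nonempty, it has a unique nearest point \(k_0\) to \(w_0\); set \(v=w_0-k_0\neq0\). The usual variational inequality for the nearest-point projection then gives \(\operatorname{Re}\bigl((w-k_0)\overline v\bigr)\leqslant0\) for all \(w\in K\).

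Next put \(\zeta=\overline v/|v|\in\T\). For every \(w\in K\) we get \(\operatorname{Re}(\zeta w)\leqslant\operatorname{Re}(\zeta k_0)\). On the other hand, \(\operatorname{Re}(\zeta w_0)=\operatorname{Re}(\zeta k_0)+|v|\). Combining these,
\[
   h_K(\zeta)\leqslant\operatorname{Re}(\zeta k_0)<\operatorname{Re}(\zeta w_0)\leqslant h_L(\zeta).
\]
This contradicts the hypothesis \(h_K\geqslant h_L\), so \(L\subseteq K\). If \(L\) is empty there is nothing to prove. If \(K\) is empty but \(L\) is not, then \(h_K=-\infty<h_L\), which again contradicts the hypothesis, so this case cannot occur.

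For the second assertion we apply the first with \(L=r\overline\D\), which is compact and convex. When \(r\geqslant0\), we have \(h_L(\zeta)=\sup_{|w|\leqslant r}\operatorname{Re}(\zeta w)=r\) for every \(\zeta\in\T\), because \(|\zeta|=1\) and the supremum is attained at \(w=r\overline\zeta\). The hypothesis \(h_K\geqslant r=h_L\) therefore yields \(r\overline\D\subseteq K\). The case \(r<0\) does not arise in the intended application; if one wants to cover it, the convention \(r\overline\D=\{|w|\leqslant r\}\) makes \(L\) empty and the inclusion trivial.

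The only step with real content is the strict separation in the first paragraph, namely the nearest-point inequality. If one prefers to avoid it, one can instead cite the Hahn--Banach separation theorem in \(\mathbb R^2\) for a point and a compact convex set. Everything else is bookkeeping with the identification \(\operatorname{Re}(\zeta w)=\langle(\operatorname{Re}\zeta,-\operatorname{Im}\zeta),w\rangle_{\mathbb R^2}\).
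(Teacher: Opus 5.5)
Your proof is correct and follows the paper's argument: a point of \(L\) outside \(K\) is strictly separated from \(K\), giving \(\zeta\in\T\) with \(h_K(\zeta)<h_L(\zeta)\), and the disc statement follows by taking \(L=r\overline\D\); you prove the separation via the nearest-point projection where the paper cites it. One minor quibble: for \(r<0\) the set \(r\overline\D\) would normally mean \(|r|\overline\D\), and then the second assertion is false, so instead of appealing to an ad hoc convention you should simply say that it tacitly requires \(r\geqslant0\), which is the only case the paper uses.
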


\begin{proof}
If a point of \(L\) did not belong to \(K\), the strict separation theorem in
\(\mathbb R^2\cong\C\) would produce a direction in which the support
function of \(L\) is larger than that of \(K\).  The final assertion follows
by taking \(L=r\overline\D\).
\end{proof}

\begin{lemma}\label{lem:left-regular}
Let \(\calA\) be a unital Banach algebra and, for \(a\in\calA\), let
\(L_a\in\calB(\calA)\) denote left multiplication by \(a\).  The map
\[
       \lambda:\calA\longrightarrow\calB(\calA),
       \qquad \lambda(a)=L_a,
\]
is a unital isometric algebra embedding.  Moreover,
\[
       V(L_a,\calB(\calA))=V(a,\calA),
       \qquad
       \Psi(L_a,\calB(\calA))=\Psi(a,\calA),
\]
and \(a\) and \(L_a\) have the same minimal polynomial.
\end{lemma}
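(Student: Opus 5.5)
The plan is to verify the algebraic properties of \(\lambda\) first, then deduce the numerical-range identity from the support-function formula \eqref{eq:support} and Lemma~\ref{lem:support-containment}, and finally read off the remaining claims.

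\emph{The embedding.} Linearity of \(\lambda\) and the identities \(L_{ab}=L_aL_b\) and \(L_1=I\) are immediate from associativity. For isometry, note that \(\|L_ax\|=\|ax\|\leqslant\|a\|\|x\|\) gives \(\|L_a\|\leqslant\|a\|\). Evaluating at the unit gives \(\|L_a\|\geqslant\|L_a1\|/\|1\|=\|a\|\), and this uses the standing hypothesis \(\|1\|=1\). So \(\lambda\) is a unital isometric algebra homomorphism. Being isometric, it is injective, hence an embedding onto the closed unital subalgebra \(\lambda(\calA)\) of \(\calB(\calA)\).

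\emph{Numerical ranges.} Since \(\lambda\) is a unital isometry, for every \(\zeta\in\T\) and \(t>0\) we have
\[
\|I+t\zeta L_a\|=\|\lambda(1+t\zeta a)\|=\|1+t\zeta a\|.
\]
Applying \eqref{eq:support} in both algebras shows that the support functions of \(V(L_a,\calB(\calA))\) and \(V(a,\calA)\) coincide in every direction. Both sets are compact and convex, so applying Lemma~\ref{lem:support-containment} in each direction gives equality. An alternative route, avoiding \eqref{eq:support}, is the subalgebra invariance \(V(b,\calB)=V(b,\calA)\) applied to \(\lambda(\calA)\subseteq\calB(\calA)\). That gives \(V(L_a,\calB(\calA))=V(L_a,\lambda(\calA))\). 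Transporting states along the isometric isomorphism \(\lambda:\calA\to\lambda(\calA)\) then identifies the latter with \(V(a,\calA)\). Either route works; I would present the subalgebra one since the paper has already justified that invariance.

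\emph{Constants and minimal polynomials.} For every \(p\in\C[z]\), the homomorphism property gives \(p(L_a)=L_{p(a)}\), so \(\|p(L_a)\|=\|p(a)\|\) by isometry. The denominators in the definition of \(\Psi\) agree because the numerical ranges agree, and hence \(\Psi(L_a,\calB(\calA))=\Psi(a,\calA)\). Finally, \(p(L_a)=0\) if and only if \(L_{p(a)}=0\), which by injectivity holds if and only if \(p(a)=0\). The annihilating ideals of \(a\) and \(L_a\) therefore coincide, as do their minimal polynomials, including the case where neither is algebraic.

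No step is a real obstacle. The only points needing care are the lower bound \(\|L_a\|\geqslant\|a\|\), which genuinely uses \(\|1\|=1\), and checking that the state-transport argument preserves the normalisation \(\langle 1,\varphi\rangle=1=\|\varphi\|\). The latter holds because \(\lambda\) is unital and isometric.
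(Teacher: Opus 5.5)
Your proposal is correct and follows the paper's proof closely. Isometry comes from \(\|L_a1\|=\|a\|\), norms and minimal polynomials from \(p(L_a)=L_{p(a)}\) together with injectivity, and the numerical ranges from subalgebra invariance applied to \(\lambda(\calA)\subseteq\calB(\calA)\). Your support-function alternative via \eqref{eq:support} and Lemma~\ref{lem:support-containment} is also valid, and your state-transport step spells out what the paper leaves implicit in writing \(V(L_a,\lambda(\calA))=V(a,\calA)\).
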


\begin{proof}
We have \(\|L_a\|\leqslant\|a\|\), whereas
\(\|L_a\|\geqslant\|L_a1\|=\|a\|\).  Also
\[
       p(L_a)=L_{p(a)}\qquad(p\in\C[z]),
\]
and \(L_b=0\) if and only if \(b=0\); this proves the assertions about
minimal polynomials and polynomial norms.  Finally, \(\lambda(\calA)\) is a
unital subalgebra of \(\calB(\calA)\), so subalgebra invariance gives
\[
       V(L_a,\calB(\calA))=V(L_a,\lambda(\calA))=V(a,\calA).
\]
The equality of the spectral constants follows immediately.
\end{proof}

\begin{lemma}\label{lem:invariant-restriction}
Let \(T\in\calB(X)\), and let \(Y\subseteq X\) be a non-zero closed \(T\)-invariant subspace.  Then
\[
       V(T|_Y,\calB(Y))\subseteq V(T,\calB(X)).
\]
\end{lemma}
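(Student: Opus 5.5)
Since both numerical ranges are compact convex subsets of \(\C\), we compare support functions and invoke Lemma~\ref{lem:support-containment} with \(K=V(T,\calB(X))\) and \(L=V(T|_Y,\calB(Y))\). By the support-function formula \eqref{eq:support}, applied once in \(\calB(Y)\) and once in \(\calB(X)\), it suffices to show that for every \(\zeta\in\T\) and every \(t>0\),
\[
       \|1_Y+t\zeta T|_Y\|_{\calB(Y)}\leqslant\|1_X+t\zeta T\|_{\calB(X)}.
\]
Subtracting \(1\), dividing by \(t\) and letting \(t\downarrow0\) then gives
\[
       \sup_{w\in L}\operatorname{Re}(\zeta w)\leqslant\sup_{w\in K}\operatorname{Re}(\zeta w)
       \qquad(\zeta\in\T),
\]
and Lemma~\ref{lem:support-containment} yields \(L\subseteq K\).

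The displayed norm inequality is immediate. Since \(Y\) is \(T\)-invariant, the operator \(1_X+t\zeta T\) maps \(Y\) into \(Y\), and its restriction to \(Y\) is exactly \(1_Y+t\zeta T|_Y\). The norm of a restriction never exceeds the norm of the original operator, because we take the supremum over the unit ball of \(Y\), which is contained in the unit ball of \(X\). We need \(Y\neq\{0\}\) so that \(\calB(Y)\) is a unital Banach algebra with \(\|1_Y\|=1\). Then \(V(T|_Y,\calB(Y))\) is defined and \eqref{eq:support} applies to it.

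There is no genuine obstacle. The only point to check is that \eqref{eq:support} is legitimate in both algebras, which holds because each is unital with unit of norm one. An alternative route would extend states of \(\calB(Y)\) to states of \(\calB(X)\). That route is awkward, since \(\calB(Y)\) is not a subalgebra of \(\calB(X)\), and this is why the support-function route is preferable.
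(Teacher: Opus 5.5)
Your proof is correct and follows the paper's argument essentially verbatim. The paper also proves the inequality \(\|I_Y+t\zeta T|_Y\|\leqslant\|I_X+t\zeta T\|\) for \(t>0\), takes right derivatives in the support-function formula \eqref{eq:support}, and concludes with Lemma~\ref{lem:support-containment}.
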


\begin{proof}
For every \(\zeta\in\T\) and \(t>0\),
\[
       \|I_Y+t\zeta T|_Y\|\leqslant\|I_X+t\zeta T\|.
\]
Taking right derivatives at \(t=0\) in the support-function formula \eqref{eq:support} shows that the support function of \(V(T|_Y)\) is bounded above by that of \(V(T)\) in every direction.  Lemma~\ref{lem:support-containment} now gives the asserted inclusion.
\end{proof}

We recall the divided-difference notation used below.  Given an ordered list of
nodes \(\boldsymbol\lambda=(\lambda_0,\ldots,\lambda_{m-1})\), with
repetitions allowed, put
\[
       N_0(z)=1,
       \qquad
       N_j(z)=\prod_{k=0}^{j-1}(z-\lambda_k)
       \quad(1\leqslant j\leqslant m-1).
\]
The Newton--Hermite interpolant of a function \(f\) at these nodes is
\begin{equation}\label{eq:newton-hermite-form}
       H_{\boldsymbol\lambda}f(z)
       =\sum_{j=0}^{m-1}
       f[\lambda_0,\ldots,\lambda_j]N_j(z).
\end{equation}
It is the unique polynomial of degree less than \(m\) which matches the
values and derivatives of \(f\) prescribed by the multiplicities of the
nodes; see \cite[Proposition~7 and formulas~(5), (6)]{deBoor}.  In
particular, formula~(14) of \cite{deBoor} gives
\(f[\lambda,\ldots,\lambda]=f^{(j)}(\lambda)/j!\) when \(\lambda\) is
repeated \(j+1\) times.  For pairwise distinct nodes one has the Lagrange
representation
\begin{equation}\label{eq:lagrange-divided-difference}
       f[\lambda_0,\ldots,\lambda_j]
       =\sum_{k=0}^j
       \frac{f(\lambda_k)}
       {\displaystyle\prod_{\substack{0\leqslant \ell\leqslant j\\ \ell\neq k}}
       (\lambda_k-\lambda_\ell)};
\end{equation}
see \cite[Example~9, formula~(41)]{deBoor}.  Finally, if \(f\) is
holomorphic on and inside a positively oriented contour \(\Gamma\) which
surrounds the nodes, then Cauchy's divided-difference formula is
\begin{equation}\label{eq:cauchy-divided-difference}
       f[\lambda_0,\ldots,\lambda_j]
       =\frac{1}{2\pi i}\int_\Gamma
       \frac{f(z)}{\prod_{k=0}^j(z-\lambda_k)}\,dz.
\end{equation}
This is formula~(51) in \cite{deBoor}; repeated nodes are included by
confluence.  We refer to \cite[Chapter~1]{Higham} for the use of these
formulae in polynomial and holomorphic matrix functional calculus.

A Littlewood polynomial of length \(n\) is a polynomial
\(q(z)=\sum_{k=0}^{n-1}\eps_kz^k\) with \(\eps_k\in\{-1,1\}\).  Let
\((\eps_k)_{k\geqslant0}\) be the Rudin--Shapiro sequence and set
\[
       q_n(z)=\sum_{k=0}^{n-1}\eps_kz^k.
\]
Balister's estimate \cite[Theorem~1]{Balister} gives
\[
       \sup_{|z|=1}|q_n(z)|
       \leqslant \sqrt{6n-2}-1<\sqrt{6n}.
\]
By the maximum-modulus principle, the same estimate holds on
\(\overline\D\).  Thus we fix these polynomials \(q_n\) throughout and take
\(\Delta=\sqrt6\).  They are the Littlewood test polynomials used in
Sections~\ref{sec:polynomially-bounded} and~\ref{sec:shift-obstructions}; no
other property of the Rudin--Shapiro construction is needed.

\section{Algebraic elements of fixed degree}

\begin{proposition}\label{prop:gamma-n-lower}
For every integer \(n\geqslant2\),
\[
       \Gamma_n\geqslant 2n-1.
\]
In particular, \(\Gamma_3\geqslant5\).
\end{proposition}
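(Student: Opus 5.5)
The plan is to realise the lower bound in \(\calB(\ell_1^n)\), using upper bidiagonal matrices
\[
T=\sum_{j=1}^n\lambda_je_{jj}+\sum_{j=1}^{n-1}b_je_{j,j+1}
\]
with pairwise distinct \(\lambda_j\). Such a \(T\) has algebraic degree exactly \(n\), so \(\Psi(T,\calB(\ell_1^n))\) is admissible in the definition of \(\Gamma_n\). I would proceed in four steps.

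\textbf{Step 1: the numerical range.} The operator norm on \(\ell_1^n\) is the maximal column sum. Hence \eqref{eq:support} gives, for \(\zeta\in\T\),
\[
\sup_{w\in V(T)}\operatorname{Re}(\zeta w)=\max_j\Bigl(\operatorname{Re}(\zeta\lambda_j)+\sum_{i\neq j}|t_{ij}|\Bigr).
\]
By Lemma~\ref{lem:support-containment} this identifies \(V(T)\) as the convex hull of the discs \(\overline D(\lambda_j,|b_{j-1}|)\), with \(b_0=0\). This is a Gershgorin-type description.

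\textbf{Step 2: the entries of \(p(T)\).} For a bidiagonal matrix one has
\[
p(T)_{ij}=p[\lambda_i,\ldots,\lambda_j]\,b_i\cdots b_{j-1}\qquad(i\leqslant j).
\]
This is the standard Newton/divided-difference formula, by induction on \(\deg p\) or via \eqref{eq:cauchy-divided-difference}. Since \(\|p(T)\|\) is at least the \(\ell_1\)-norm of the last column, it suffices to arrange:
\begin{itemize}
\item \(V(T)\subseteq\overline\D\);
\item a polynomial \(p\) with \(\sup_{\overline\D}|p|\leqslant1\);
\item \(\sum_{i\leqslant n}\bigl|p[\lambda_i,\ldots,\lambda_n]\,b_i\cdots b_{n-1}\bigr|\) close to \(2n-1\).
\end{itemize}

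\textbf{Step 3: the case \(n=2\).} Take \(\lambda_1=-1\), \(\lambda_2=0\), \(b_1=1\). Then \(V(T)=\conv(\{-1\}\cup\overline\D)=\overline\D\). Use the Blaschke factor \(\phi_r(z)=(z+r)/(1+rz)\), which satisfies \(\phi_r(0)=r\) and \(\phi_r(-1)=-1\). The last column then has norm
\[
r+\frac{|r-(-1)|}{1}=2r+1\to3 .
\]
Finally, replace \(\phi_r\) by a polynomial: take a Taylor or Fejér partial sum of \(\phi_r(sz)\) with \(s<1\), rescaled by \(1+\eps\). This keeps the sup on \(\overline\D\) at most \(1\) and perturbs the two interpolation values by an arbitrarily small amount.

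\textbf{Step 4: general \(n\) and the main obstacle.} Choose a finite Blaschke product \(B\) of degree about \(n-1\), and nodes \(\lambda_1,\ldots,\lambda_n\) at which \(B\) takes almost alternating values \(\pm1\) (Schur interpolation data). Then choose the radii \(|b_j|\) and the positions of the nodes at widely separated scales, so that three things hold. First, \(V(T)\) stays inside \(\overline\D\). Second, in the sum for the last column, each divided difference \(p[\lambda_i,\ldots,\lambda_n]\) is dominated by a single jump of size about \(2\) divided by a product of node gaps. Third, the product \(b_i\cdots b_{n-1}\) exactly cancels that denominator. Each of the \(n-1\) off-diagonal entries then contributes about \(2\) and the diagonal about \(1\), giving \(2n-1\) in the limit; the same polynomial approximation as in Step~3 finishes the argument.

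I expect the main obstacle to be the bookkeeping of these scale choices. One must ensure that the lower-order terms in the Lagrange expansion \eqref{eq:lagrange-divided-difference} are negligible, while the discs \(\overline D(\lambda_j,|b_{j-1}|)\) still fit inside the unit disc. My first attempt would place the nodes on a ray at geometrically shrinking distances \(\delta^{k}\) from a boundary point. I would take \(B\) to be a product of Möbius factors adapted to these scales, so that \(B\) flips sign across each consecutive gap. Then I would let \(\delta\to0\) and check term by term that only the leading term survives.
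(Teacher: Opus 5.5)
Your proposal is correct in outline and follows essentially the same route as the paper. Both arguments use upper bidiagonal matrices on $\ell_1^n$ whose column Gershgorin discs lie in $\overline\D$, a finite Blaschke product with almost alternating values $\pm1$ at nodes accumulating on separated scales at a boundary point, and the bidiagonal divided-difference formula, in which the superdiagonal products cancel the two dominant Lagrange denominators so that the last column tends to $2n-1$.

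The bookkeeping you defer is exactly what the paper carries out. It takes $\lambda_j=1-\eps_j$ with $\eps_1=0$, $\eps_j=\delta^{n-j}$ for $2\leqslant j\leqslant n-1$, and $\eps_n=1$. The nodes are therefore ordered by increasing distance from $1$, a point your sketch leaves implicit. The superdiagonal entry in column $j$ is $\eps_j$, so each disc is internally tangent to $\T$ at $1$. The Blaschke zeros sit at $1-\eps_2^2$ and $1-\sqrt{\eps_j\eps_{j+1}}$. Finally, the non-adjacent Lagrange coefficients satisfy $|c_{ik}^{(\delta)}|\leqslant 2^{n}\delta^{(k-i-1)(k-i)/2}$.
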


\begin{proof}
For \(n=2\), let
\[
       P=\begin{pmatrix}1&1\\0&0\end{pmatrix}
\]
act on \(\ell_1^2\).  Then \(P^2=P\), and the column Gershgorin description of the algebraic numerical range on \(\ell_1^m\) \cite[Theorem~7.2]{BHDVW} gives
\[
       V(P,\calB(\ell_1^2))=\conv(\{1\}\cup\overline\D)=\overline\D.
\]
For \(0<\rho<1\), put \(g_\rho(z)=(z-\rho)/(1-\rho z)\).  Then \(\|g_\rho\|_{\infty,\overline\D}=1\), \(g_\rho(1)=1\), and \(g_\rho(0)=-\rho\).  Since \(g_\rho\) is holomorphic in a neighbourhood of \(\overline\D\), choose polynomials \(p_\nu\) converging uniformly to \(g_\rho\) on \(\overline\D\).  The identity
\[
       p_\nu(P)=p_\nu(0)(I-P)+p_\nu(1)P
\]
shows that \(p_\nu(P)\to g_\rho(P)\) in operator norm.  Moreover
\[
       g_\rho(P)=\begin{pmatrix}1&1+\rho\\0&-\rho\end{pmatrix},
\]
whose \(\ell_1^2\)-operator norm is \(1+2\rho\).  Letting \(\rho\uparrow1\) gives \(\Gamma_2\geqslant3\).

Assume now that \(n\geqslant3\).  Let \(0<\delta<1\).  Put
\[
       \eps_1=0,
       \qquad
       \eps_j=\delta^{\,n-j}\quad(2\leqslant j\leqslant n-1),
       \qquad
       \eps_n=1,
       \qquad
       \lambda_j=1-\eps_j.
\]
Let \(A_\delta\in\calB(\ell_1^n)\) be the upper bidiagonal matrix with diagonal entries \(\lambda_1,\ldots,\lambda_n\) and superdiagonal entries \(\eps_2,\ldots,\eps_n\).  The numbers \(\lambda_j\) are distinct, so the minimal polynomial of \(A_\delta\) has degree \(n\).  The column Gershgorin discs are
\[
       \{1\},\quad \lambda_j+\eps_j\overline\D\quad(2\leqslant j\leqslant n).
\]
Each of these discs is contained in \(\overline\D\), and the last one is \(\overline\D\) itself.  Hence \(V(A_\delta,\calB(\ell_1^n))=\overline\D\).

Choose numbers \(\eta_j\) by
\[
       \eta_1=\eps_2^2,
       \qquad
       \eta_j=(\eps_j\eps_{j+1})^{1/2}\quad(2\leqslant j\leqslant n-1),
\]
and set \(r_j=1-\eta_j\).  With \(b_r(z)=(z-r)/(1-rz)\), put
\[
       q_\delta(z)=\prod_{j=1}^{n-1}b_{r_j}(z).
\]
This is a finite Blaschke product, so \(\|q_\delta\|_{\infty,\overline\D}=1\).  Since
\[
       b_{1-\eta}(1-\eps)=\frac{\eta-\eps}{\eta+\eps-\eta\eps},
\]
the sign limit can be read off factor by factor.  Fix \(i\).  From the
choice of the intermediate scales \(\eta_j\), including
\(\eta_1=\eps_2^2\), one has
\[
       \frac{\eta_j}{\eps_i}\longrightarrow0\quad(j<i),
       \qquad
       \frac{\eps_i}{\eta_j}\longrightarrow0\quad(j\geqslant i),
\]
where the second statement is immediate for \(i=1\) because \(\eps_1=0\).
Consequently,
\[
       b_{1-\eta_j}(1-\eps_i)\longrightarrow
       \begin{cases}
       -1,&j<i,\\
        1,&j\geqslant i.
       \end{cases}
\]
There are exactly \(i-1\) negative factors, and therefore
\[
       q_\delta(\lambda_i)\longrightarrow \sigma_i:=(-1)^{i-1}
       \qquad(1\leqslant i\leqslant n)
\]
as \(\delta\downarrow0\).

For a function \(q\) holomorphic in a neighbourhood of \(\overline\D\), the standard divided-difference formula for functions of an upper bidiagonal matrix \cite[Chapter~1]{Higham} gives
\[
       (q(A_\delta))_{ij}=\eps_{i+1}\cdots\eps_j\,
       q[\lambda_i,\ldots,\lambda_j]
       \quad(i<j),
       \qquad
       (q(A_\delta))_{ii}=q(\lambda_i).
\]
Let \(F_i^\delta\) denote the \(i\)-th entry of the last column of
\(q_\delta(A_\delta)\).  Combining the preceding bidiagonal-matrix formula
with \eqref{eq:lagrange-divided-difference} gives
\[
       F_i^\delta=
       \sum_{k=i}^n q_\delta(\lambda_k)c_{ik}^{(\delta)},
       \qquad
       c_{ik}^{(\delta)}=
       \frac{\eps_{i+1}\cdots\eps_n}
       {\displaystyle\prod_{\substack{\ell=i\\ \ell\neq k}}^n
       (\lambda_k-\lambda_\ell)}
       \quad(i<n),
\]
whereas \(F_n^\delta=q_\delta(\lambda_n)\).  Since
\(\lambda_k-\lambda_\ell=\eps_\ell-\eps_k\), the first two coefficients
satisfy
\[
       c_{ii}^{(\delta)}
       =\prod_{\ell=i+1}^n
       \frac{\eps_\ell}{\eps_\ell-\eps_i}
       \longrightarrow1
\]
and
\[
       c_{i,i+1}^{(\delta)}
       =\frac{\eps_{i+1}}{\eps_i-\eps_{i+1}}
       \prod_{\ell=i+2}^n
       \frac{\eps_\ell}{\eps_\ell-\eps_{i+1}}
       \longrightarrow-1.
\]
For \(k\geqslant i+2\), we make the scale estimate explicit.  Assume
\(0<\delta\leqslant1/2\).  If \(\ell<k\), then
\(\eps_\ell/\eps_k\leqslant\delta\), and hence
\[
       |\eps_\ell-\eps_k|\geqslant(1-\delta)\eps_k
       \geqslant\frac12\eps_k.
\]
If \(\ell>k\), then \(\eps_k/\eps_\ell\leqslant\delta\), and similarly
\[
       |\eps_\ell-\eps_k|\geqslant\frac12\eps_\ell.
\]
There are \(k-i\) factors of the first type.  Cancelling the factors with
\(\ell>k\) against the numerator gives
\[
\begin{split}
       |c_{ik}^{(\delta)}|
       &\leqslant
       2^{n-i}
       \frac{\eps_{i+1}\cdots\eps_{k-1}}
       {\eps_k^{\,k-i-1}}\\
       &=2^{n-i}\,
       \delta^{\sum_{j=i+1}^{k-1}(k-j)}
       =2^{n-i}\,
       \delta^{(k-i-1)(k-i)/2}.
\end{split}
\]
Thus, for fixed \(n\), one may take \(C_n=2^n\) in the uniform estimate
\[
       |c_{ik}^{(\delta)}|
       \leqslant C_n\delta^{(k-i-1)(k-i)/2},
       \qquad k\geqslant i+2,
\]
and in particular \(c_{ik}^{(\delta)}\to0\).
It follows that
\[
       F_i^\delta\longrightarrow \sigma_i-\sigma_{i+1}=2(-1)^{i-1}
       \quad(i<n),
       \qquad
       F_n^\delta\longrightarrow\sigma_n.
\]
Consequently,
\[
\begin{split}
       \|q_\delta(A_\delta)\|_{\ell_1^n\to\ell_1^n}
       &\geqslant \|q_\delta(A_\delta)e_n\|_1\\
       &=\sum_{i=1}^n|F_i^\delta|
       \longrightarrow 2(n-1)+1=2n-1.
\end{split}
\]
Finally, \(A_\delta\) has distinct eigenvalues and is therefore
 diagonalizable.  If
\(A_\delta=S_\delta\operatorname{diag}(\lambda_1,\ldots,\lambda_n)
S_\delta^{-1}\) and polynomials \(p_\nu\) converge uniformly to
\(q_\delta\) on \(\overline\D\), then
\[
       p_\nu(A_\delta)\longrightarrow q_\delta(A_\delta)
\]
in operator norm and
\(\|p_\nu\|_{\infty,\overline\D}\to
\|q_\delta\|_{\infty,\overline\D}=1\).  Hence, for every fixed
\(\delta\),
\[
       \Psi(A_\delta,\calB(\ell_1^n))
       \geqslant
       \lim_{\nu\to\infty}
       \frac{\|p_\nu(A_\delta)\|}
       {\|p_\nu\|_{\infty,\overline\D}}
       =\|q_\delta(A_\delta)\|.
\]
Letting \(\delta\downarrow0\) proves the assertion.
\end{proof}

\begin{remark}\label{rem:exact-degree-two-example}
For the idempotent
\[
       P=\begin{pmatrix}1&1\\0&0\end{pmatrix}
       \in\calB(\ell_1^2)
\]
one has
\[
       V(P,\calB(\ell_1^2))=\overline\D
       \qquad\text{and}\qquad
       \Psi(P,\calB(\ell_1^2))=3.
\]
Indeed, since \(P^2=P\),
\[
       p(P)=
       \begin{pmatrix}
       p(1)&p(1)-p(0)\\
       0&p(0)
       \end{pmatrix}.
\]
Consequently,
\[
\begin{split}
       \|p(P)\|_{\ell_1^2\to\ell_1^2}
       &=\max\bigl\{|p(1)|,\ |p(1)-p(0)|+|p(0)|\bigr\}\\
       &\leqslant3\sup_{z\in\overline\D}|p(z)|.
\end{split}
\]
Thus \(\Psi(P,\calB(\ell_1^2))\leqslant3\).  Conversely, the functions
\[
       g_\rho(z)=\frac{z-\rho}{1-\rho z}
       \qquad(0<\rho<1)
\]
satisfy
\[
       \|g_\rho\|_{\infty,\overline\D}=1,
       \qquad
       g_\rho(1)=1,
       \qquad
       g_\rho(0)=-\rho,
\]
and hence
\[
       \|g_\rho(P)\|_{\ell_1^2\to\ell_1^2}=1+2\rho.
\]
Since each \(g_\rho\) is holomorphic in a neighbourhood of
\(\overline\D\), it can be approximated there uniformly by polynomials.
Letting \(\rho\uparrow1\) proves the reverse inequality.
\end{remark}

\begin{remark}[Why the lower-bound matrices are not ordinary shifts]
It may be useful to explain the form of the matrices \(A_\delta\) in
Proposition~\ref{prop:gamma-n-lower}.  The first natural attempt would be to
use the nilpotent shift \(N_n\) on \(\ell_1^n\), since the column Gershgorin
formula gives
\[
       V(N_n,\calB(\ell_1^n))=\overline\D.
\]
However, this cannot yield the linear lower bound \(2n-1\).  Indeed, if
\[
       q(z)=\sum_{k\geqslant0}a_kz^k,
       \qquad
       \|q\|_{\infty,\D}\leqslant1,
\]
then \(q\in H^2\) and
\[
       \sum_{k\geqslant0}|a_k|^2\leqslant1.
\]
Since \(N_n^n=0\),
\[
       \|q(N_n)\|_{\ell_1^n\to\ell_1^n}
       \leqslant\sum_{k=0}^{n-1}|a_k|
       \leqslant\sqrt n.
\]
Thus a plain shift, even when tested on finite Blaschke products, detects only
Taylor coefficients of a Schur function and produces at most a
\(\sqrt n\)-effect.

The matrices \(A_\delta\) are designed to exploit interpolation instead.
Their first \(n-1\) diagonal points
\(\lambda_j=1-\eps_j\) cluster near the boundary point \(1\) on widely
separated scales, while the final point is \(\lambda_n=0\).  The
superdiagonal entry in the \(j\)-th column is \(\eps_j\), so the corresponding
column Gershgorin disc is
\[
       \lambda_j+\eps_j\overline\D
       =1-\eps_j+\eps_j\overline\D
       \subseteq\overline\D,
\]
and the final disc is the whole unit disc.  Thus
\(V(A_\delta,\calB(\ell_1^n))=\overline\D\), while the upper triangular
structure still permits large values of \(q(A_\delta)\).

The zeros of the Blaschke product \(q_\delta\) lie at intermediate scales
between neighbouring spectral points.  Consequently,
\[
       q_\delta(\lambda_i)\longrightarrow(-1)^{i-1},
\]
so these values oscillate almost maximally along the ordered spectrum.  The
functional calculus for an upper bidiagonal matrix expresses the last column
of \(q_\delta(A_\delta)\) through divided differences.  The product
\[
       \eps_{i+1}\cdots\eps_n
\]
of superdiagonal entries is precisely the normalising factor which cancels
the dominant denominators.  Scale separation makes every interpolation node
except the two nearest ones asymptotically negligible.  More precisely,
\[
       F_i^\delta-
       \bigl(q_\delta(\lambda_i)-q_\delta(\lambda_{i+1})\bigr)
       \longrightarrow0,
\]
and hence
\[
       F_i^\delta\longrightarrow2(-1)^{i-1}
       \qquad(i<n).
\]
The last entry is \(F_n^\delta=q_\delta(\lambda_n)\) and tends to
\((-1)^{n-1}\).  The \(\ell_1\)-norm of the last column therefore tends to
\[
       2+\cdots+2+1=2n-1.
\]
In this sense, \(A_\delta\) is a finite-difference amplifier: its numerical
range remains the unit disc, but it converts almost alternating interpolation
values of a Schur function into a large column sum.
\end{remark}

\begin{theorem}\label{thm:degree-two}
For the degree-two constant one has
\[
       3\leqslant \Gamma_2
       \leqslant \sqrt{1+(2\mathrm e-1)^2}<4.55.
\]
\end{theorem}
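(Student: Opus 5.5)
The lower bound \(\Gamma_2\geqslant3\) is already contained in Proposition~\ref{prop:gamma-n-lower}; Remark~\ref{rem:exact-degree-two-example} shows it is attained. So only the upper bound needs proof. Let \(a\) have degree at most two. Degree one gives constant one. Otherwise the minimal polynomial is \((z-\lambda)(z-\mu)\), possibly with \(\lambda=\mu\). Put \(c=(\lambda+\mu)/2\), \(\delta=(\mu-\lambda)/2\) and \(b=a-c\), so that \(b^2=\delta^2\). By affine invariance we may work with \(b\) and \(V(b)=V(a)-c\). For every polynomial \(p\), the Newton form \eqref{eq:newton-hermite-form} gives
\[
       p(a)=p[\lambda]\,1+p[\lambda,\mu]\,(a-\lambda)
       =\alpha\,1+\beta\, b,
\]
where \(\alpha=\tfrac12(p(\lambda)+p(\mu))\) and \(\beta=p[\lambda,\mu]\); in the confluent case \(\alpha=p(c)\) and \(\beta=p'(c)\). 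Hence \(\|p(a)\|\leqslant|\alpha|+|\beta|\,\|b\|\). The task is to bound this by \(\sqrt{1+(2\mathrm e-1)^2}\,\sup_{V(a)}|p|\).

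The key geometric step is to show that \(V(b)\) contains a disc \(D=w_0+r\overline\D\) with \(r\) comparable to \(\|b\|\), and that this disc sits well relative to the nodes \(\pm\delta\). I would use the Bohnenblust--Karlin inequality \(\|x\|\leqslant\mathrm e\, v(x)\) for the numerical radius \(v\), applied to suitable rotations and translations of \(b\). Combined with the support-function formula \eqref{eq:support} and Lemma~\ref{lem:support-containment}, this should turn lower bounds on \(\sup_{w\in V(b)}\operatorname{Re}(\zeta w)\) in every direction \(\zeta\in\T\) into a disc inside \(V(b)\). The relation \(b^2=\delta^2\) lets one control \(\|1+t\zeta b\|\) from below through \((1+t\zeta b)(1-t\zeta b)=(1-t^2\zeta^2\delta^2)1\). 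My first attempt would use this identity to show that the support function of \(V(b)\) in direction \(\zeta\) and in direction \(-\zeta\) cannot both be small compared with \(\|b\|/\mathrm e\). This gives a disc of radius roughly \(\|b\|/(2\mathrm e)\), minus a correction involving \(|\delta|\), which is where the combination \(2\mathrm e-1\) should come from. When \(|\delta|\) is large compared with \(\|b\|\), the trivial bound from \(\pm\delta\in V(b)\) should suffice.

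With such a disc available, I would bound \(\alpha\) and \(\beta\) jointly rather than separately. Set \(M=\sup_{V(a)}|p|\). On the circle \(\partial D\), the mean value of \(|p|^2\) is at least \(|p(w_0)|^2+r^2|p'(w_0)|^2\) by Parseval. Cauchy's divided-difference formula \eqref{eq:cauchy-divided-difference} on a contour in \(V(b)\) surrounding \(\pm\delta\) expresses \(\beta\) and \(\alpha\) as integrals of \(p\). Cauchy--Schwarz then gives \(|\alpha|+|\beta|\,\|b\|\leqslant\sqrt{1+K^2}\,M\), where \(K\) is the ratio \(\|b\|/r\) adjusted for the node positions. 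The square root in the stated constant suggests exactly this orthogonality step, with \(K=2\mathrm e-1\).

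The main obstacle is the geometric step: producing a disc in \(V(b)\) of radius at least \(\|b\|/(2\mathrm e-1)\), suitably centred relative to \(\pm\delta\), uniformly in \(\delta\). I would first settle the nilpotent case \(\delta=0\), where \(V(b)\) should contain the disc of radius \(\|b\|/(2\mathrm e)\) about \(0\). I would then treat general \(\delta\) by perturbation, interpolating between this case and the regime where the segment \([-\delta,\delta]\) dominates.
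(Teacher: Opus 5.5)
Your outline has the right architecture: the lower bound from Proposition~\ref{prop:gamma-n-lower}, the reduction $p(a)=\alpha1+\beta b$, a disc in $V(b)$ of radius proportional to $\|b\|$, a joint estimate for $(\alpha,\beta)$, and the trivial bound when the nodes dominate. However, neither load-bearing step is supplied, and the tools you name for them do not work.

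\emph{The disc.} The Bohnenblust--Karlin inequality $\|b\|\leqslant\mathrm e\,v(b)$ only makes the support function $h(\zeta)=\sup_{w\in V(b)}\operatorname{Re}(\zeta w)$ large in \emph{some} direction. Lemma~\ref{lem:support-containment} needs a lower bound in \emph{every} direction, and that uniform bound is also what centres the disc at the midpoint $0$ of the nodes, which the later estimates require. The identity $(1+t\zeta b)(1-t\zeta b)=(1-t^2\zeta^2\delta^2)1$ does not help. It gives $\|1+t\zeta b\|\,\|1-t\zeta b\|\geqslant1-t^2|\delta|^2$, and after taking logarithms and applying \eqref{eq:support} this yields only $h(\zeta)+h(-\zeta)\geqslant0$, which is vacuous.

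The missing idea is the two-dimensional section argument of Lemmas~\ref{lem:nilpotent-fat} and~\ref{lem:involution-fat}. Normalise to $U^2=1$, take a unit vector $x$ with $m=\|Ux\|$ close to $M=\|U\|$, and put $y=Ux/m$ and $f(r)=\|x+r\zeta y\|$. Testing $\|I+t\zeta U\|$ on $x+r\zeta y$ gives $h(\zeta)\geqslant mf'_+(r)/f(r)-r/(mf(r))$. If $h(\zeta)<M/M_0$, this becomes $f'_+<f/M_0+r/m^2$. Integrating on $[0,M_0]$ gives $f(M_0)<\mathrm e+\mathrm e(M_0/m)^2(1-2/\mathrm e)<2\mathrm e-2$ (as $m>M_0$), which contradicts $f(M_0)\geqslant M_0-1$ once $M_0\geqslant2\mathrm e-1$. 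So $2\mathrm e-1$ is the smallest admissible threshold of a Gr\"onwall argument, and the resulting disc $(M/M_0)\overline\D$ is centred at $0$. It is not a radius $\|b\|/(2\mathrm e)$ corrected by $|\delta|$.

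Your expected square-zero radius $\|b\|/(2\mathrm e)$ would in fact be too small. The best bound a centred disc of radius $\kappa\|b\|$ can give in the confluent case is $1/\kappa+\kappa/4$, which for $\kappa=1/(2\mathrm e)$ exceeds $5.4$; you need the paper's $\kappa\approx0.278$. (For $b^2=0$, a quick way to obtain that $\kappa$ is Lumer's estimate $\|1+t\zeta b\|=\|\exp(t\zeta b)\|\leqslant\mathrm e^{th(\zeta)}$ for $t\geqslant0$, combined with $\|1+t\zeta b\|\geqslant t\|b\|-1$.)

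\emph{The joint estimate.} Parseval on a circle controls $|p(w_0)|^2+r^2|p'(w_0)|^2$ only at the centre. That handles the confluent case once a centred disc is known; the paper uses Schwarz--Pick there and gets $1/\kappa+\kappa/4\leqslant65/16$.

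For distinct nodes $\pm\delta$ inside $r\overline\D$, expand $q(z)=p(rz)/K$ in Taylor coefficients and apply Cauchy--Schwarz. This gives only $|\alpha|^2+r^2|\beta|^2\leqslant K^2/(1-a^4)$ with $a=|\delta|/r$. With the disc the argument produces, $a=M_0/M$, where $M=\|b\|/|\delta|$, so $a\to1$ as $M\downarrow M_0$. The bound therefore degenerates exactly where the disc first contains the nodes, and $\sqrt{1+(2\mathrm e-1)^2}$ cannot come out.

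What is needed is the sharp two-point inequality of Lemma~\ref{lem:two-point-schwarz-pick}: $|\alpha|^2+a^{-2}|\beta|^2\leqslant1$ for Schur functions, obtained by testing the Pick matrix on $(1,-1)$. Combine it with a split at $M=M_0$. For $M\leqslant M_0$, the inclusion $\pm1\in\sigma(U)\subseteq V(U)$ gives $|\alpha|^2+|\beta|^2\leqslant K^2$ and hence $\|p(U)\|\leqslant K\sqrt{1+M^2}$. For $M>M_0$, the disc of radius $r=M/M_0$ with $a=1/r$ gives $\|p(U)\|\leqslant|\alpha|+M|\beta|\leqslant K\sqrt{1+M^2/r^2}=K\sqrt{1+M_0^2}$.
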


We first prove two geometric estimates for the numerical ranges of square-zero elements and involutions.  By Lemma~\ref{lem:left-regular}, it is enough throughout to work with operators on Banach spaces.

\begin{lemma}\label{lem:nilpotent-fat}
There is an absolute constant $\kappa>0$ such that $\kappa\|T\|\overline\D\subseteq V(T)$ whenever $T^2=0$.  More precisely, one may take
\[
       \kappa=\sup_{R>2}\frac{\log(R-1)}{R}
       =0.2784645\ldots .
\]
\end{lemma}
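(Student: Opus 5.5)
The plan is to exploit the fact that for a square-zero operator the exponential collapses to an affine function, \(e^{zT}=I+zT\). We then compare the trivial lower bound on \(\|I+zT\|\) with the standard exponential upper bound coming from the numerical range. The whole statement reduces to a lower estimate on the support function of \(V(T)\) in every direction, after which Lemma~\ref{lem:support-containment} produces the disc. By Lemma~\ref{lem:left-regular} we may work in \(\calB(X)\). The case \(T=0\) is trivial, so we assume \(T\neq0\) and, since \(V(\alpha T)=\alpha V(T)\), normalise \(\|T\|=1\).

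First, fix \(\zeta\in\T\) and let \(h(\zeta)=\sup_{w\in V(T)}\operatorname{Re}(\zeta w)\). I would establish the Lumer--Phillips type bound
\[
       \|e^{t\zeta T}\|\leqslant e^{t h(\zeta)}\qquad(t\geqslant0).
\]
To prove it, put \(\phi(t)=\|e^{t\zeta T}\|\) and write \(e^{(t+s)\zeta T}=(I+s\zeta T+O(s^2))e^{t\zeta T}\). This gives
\[
       \phi(t+s)\leqslant\bigl(\|I+s\zeta T\|+O(s^2)\bigr)\phi(t).
\]
By the support-function formula \eqref{eq:support}, the upper right Dini derivative of \(\log\phi\) is therefore at most \(h(\zeta)\) everywhere. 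Since \(\phi\) is continuous and positive, a standard Dini-derivative monotonicity argument shows that \(\log\phi(t)-t h(\zeta)\) is non-increasing, and \(\phi(0)=1\) then yields the bound. This is the only step with genuine content; it is classical (see \cite[Chapter~1]{BonsallDuncan}), so I expect only the bookkeeping of the Dini argument to need care.

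Second, because \(T^2=0\) we have \(e^{t\zeta T}=I+t\zeta T\), and the reverse triangle inequality gives \(\|I+t\zeta T\|\geqslant t\|T\|-1=t-1\). Combining this with the exponential bound, for every \(t>2\) we get \(t-1\leqslant e^{t h(\zeta)}\), that is,
\[
       h(\zeta)\geqslant\frac{\log(t-1)}{t}.
\]
Writing \(R=t\) and taking the supremum over \(R>2\) gives \(h(\zeta)\geqslant\kappa\) for every \(\zeta\in\T\). Lemma~\ref{lem:support-containment} with \(r=\kappa\) then yields \(\kappa\overline\D\subseteq V(T)\). Undoing the normalisation gives \(\kappa\|T\|\overline\D\subseteq V(T)\) in general.

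Finally, the numerical value \(0.2784645\ldots\) is obtained by maximising \(\log(R-1)/R\) over \(R>2\). The critical point satisfies \(R=(R-1)\log(R-1)\), and solving this numerically gives the stated constant; this part is routine.
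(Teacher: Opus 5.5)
Your proof is correct and gives the same constant, but it goes through a different key lemma. You use the classical exponential bound $\|e^{t\zeta T}\|\leqslant e^{t h(\zeta)}$ for $t\geqslant0$, and then combine the collapse $e^{t\zeta T}=I+t\zeta T$ with $\|I+t\zeta T\|\geqslant t\|T\|-1$. The paper's proof avoids the exponential. It fixes an almost-norming unit vector $x$ and sets $y=Tx/m$ with $m=\|Tx\|$. It then uses $f(r+tm)\leqslant\|I+t\zeta T\|\,f(r)$ for the convex function $f(r)=\|x+r\zeta y\|$ to obtain $h(\zeta)\geqslant m f'_+(r)/f(r)$ from \eqref{eq:support}, and integrates between $f(0)=1$ and $f(R)\geqslant R-1$. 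Since $f(r)=\|(I+\tfrac rm\zeta T)x\|$, this is exactly your exponential bound, proved by hand along the orbit of a single vector, so the underlying mechanism is the same.

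Your version is shorter and intrinsic to the algebra: it needs no passage to $\calB(X)$ and no near-norming vector. For $T^2=0$ you can also drop the Dini-derivative step. Since $(I+s\zeta T)(I+t\zeta T)=I+(s+t)\zeta T$, the function $\phi(t)=\|I+t\zeta T\|$ is exactly submultiplicative, so $\phi(t)\leqslant\phi(t/n)^n\to e^{th(\zeta)}$ by \eqref{eq:support}.

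The paper's local form uses nothing beyond \eqref{eq:support} and convexity. It is also the template reused in Lemma~\ref{lem:involution-fat}, Proposition~\ref{prop:nilpotents} and Lemma~\ref{lem:general-large-disc}, where it works on an only approximately invariant pair $v,w$ and absorbs error terms such as $trR_{k+1}u$. Your global form needs no such pair. For instance, if $T^m=0$, then $h(\zeta)\geqslant0$ because $0\in\sigma(T)\subseteq V(T)$. Markov's inequality on $[0,\rho]$, applied to the polynomial $s\mapsto\langle e^{s\zeta T}x,x^*\rangle$ of degree at most $m-1$, gives $\|T\|\leqslant 2(m-1)^2\rho^{-1}e^{\rho h(\zeta)}$ for every $\rho>0$. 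Hence $h(\zeta)\geqslant\|T\|/(2\mathrm e(m-1)^2)$, which depends polynomially rather than exponentially on $m$.
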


\begin{proof}
It is enough to prove the assertion for $T\in\calB(X)$.  The case $T=0$ is trivial, so put $M=\|T\|>0$ and fix $\zeta\in\T$.  Choose $x\in X$ with $\|x\|=1$ and $m=\|Tx\|>(1-\eps)M$, and set $y=Tx/m$.  Then $\|y\|=1$ and $Ty=0$.  The vectors $x$ and $y$ are linearly independent: otherwise $Tx$ would be a scalar multiple of $x$, and $T^2x=0$ would force $Tx=0$, contrary to $m>0$.  For $r\geqslant0$ let $f(r)=\|x+r\zeta y\|$.  Thus $f$ is positive, $f(0)=1$, and, for $R>2$, $f(R)\geqslant R-1$.

Since $(I+t\zeta T)(x+r\zeta y)=x+(r+tm)\zeta y$, we have, for $t>0$,
\[
       \|I+t\zeta T\|\geqslant
       \frac{\|(I+t\zeta T)(x+r\zeta y)\|}{\|x+r\zeta y\|}
       =\frac{f(r+tm)}{f(r)}.
\]
Subtracting $1$, dividing by $t$, and letting $t\downarrow0$, formula \eqref{eq:support} gives
\[
       \sup_{w\in V(T)}\operatorname{Re}(\zeta w)\geqslant m\,\frac{f'_+(r)}{f(r)}.
\]
The positive convex function $f$ is locally absolutely continuous and its right derivative agrees almost everywhere with the derivative.  Hence
\[
       \int_0^R \frac{f'_+(r)}{f(r)}\,dr=\log f(R)-\log f(0)\geqslant\log(R-1).
\]
Therefore some $r\in[0,R]$ satisfies $f'_+(r)/f(r)\geqslant\log(R-1)/R$.  Letting $\eps\downarrow0$ and then taking the supremum over $R>2$ gives $\sup_{w\in V(T)}\operatorname{Re}(\zeta w)\geqslant\kappa M$.  Lemma~\ref{lem:support-containment} gives $\kappa M\overline\D\subseteq V(T)$.
\end{proof}

\begin{lemma}\label{lem:involution-fat}
Let $U^2=1$ and put $M=\|U\|$.  If $M>2\mathrm e-1$, then
\[
       \frac{M}{2\mathrm e-1}\,\overline\D\subseteq V(U).
\]
\end{lemma}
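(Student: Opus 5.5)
The plan is to avoid the derivative-at-zero argument of Lemma~\ref{lem:nilpotent-fat} and instead bound the support function of \(V(U)\) from below by the growth of the exponential group \(e^{s\zeta U}\) at a scale \(s\asymp 1/M\). The relevant fact is the Lumer--Phillips inequality, which follows from \eqref{eq:support} by a standard semigroup argument: for every \(\zeta\in\T\) and \(s>0\),
\[
       \|e^{s\zeta U}\|\leqslant \exp\Bigl(s\,h(\zeta)\Bigr),
       \qquad
       h(\zeta)=\sup_{w\in V(U)}\operatorname{Re}(\zeta w).
\]
Consequently \(h(\zeta)\geqslant s^{-1}\log\|e^{s\zeta U}\|\). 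By Lemma~\ref{lem:left-regular} we may take \(U\in\calB(X)\). Once we know that \(h(\zeta)\geqslant M/(2\mathrm e-1)\) for every \(\zeta\in\T\), Lemma~\ref{lem:support-containment} gives the asserted disc.

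The key steps run as follows. First, since \(U^2=1\), we have the closed form
\[
       e^{s\zeta U}=\cosh(s\zeta)\,1+\sinh(s\zeta)\,U,
\]
and hence
\[
       \|e^{s\zeta U}\|\geqslant|\sinh(s\zeta)|\,M-|\cosh(s\zeta)|.
\]
Second, we make the choice \(s=(2\mathrm e-1)/M\). This lies in \((0,1)\) precisely because \(M>2\mathrm e-1\), which is where the hypothesis enters. Third, it suffices to show that \(\|e^{s\zeta U}\|\geqslant \mathrm e\). Then
\[
       h(\zeta)\geqslant\frac1s=\frac{M}{2\mathrm e-1}.
\]
In the real direction \(\zeta=1\) this is immediate, since \(\sinh s\geqslant s\) and \(\cosh s\leqslant\cosh1<\mathrm e-1\). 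These give
\[
       sM-\cosh s\geqslant 2\mathrm e-1-\cosh 1>\mathrm e.
\]

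The main obstacle is the uniformity in \(\zeta\in\T\). For non-real \(\zeta\), \(|\sinh(s\zeta)|\) can fall below \(s\); for example \(\zeta=i\) gives \(\sin s\). Bounding \(|\sinh|\) from below and \(|\cosh|\) from above independently loses too much. Write \(s\zeta=a+ib\) with \(a^2+b^2=s^2\). I would therefore use the exact identities
\[
       |\sinh(a+ib)|^2=\sinh^2a+\sin^2b,
       \qquad
       |\cosh(a+ib)|^2=\sinh^2a+\cos^2b.
\]
When \(|\sinh|\) is small, \(|\cosh|\) is correspondingly close to \(1\); in the worst case \(\zeta=\pm i\) the bound reads \(M\sin s-\cos s\). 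The aim is to verify \(|\sinh(s\zeta)|M-|\cosh(s\zeta)|\geqslant\mathrm e\) jointly in \((a,b)\), for \(s=(2\mathrm e-1)/M\) and all \(M>2\mathrm e-1\).

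If this joint estimate does not close with exactly this \(s\), I would try the following remedies in order. First, replace the crude triangle bound by testing \(e^{s\zeta U}\) on a near-extremal unit vector \(x\) with \(\|Ux\|\approx M\). Second, use the decomposition \(x=u+v\) with \(Uu=u\) and \(Uv=-v\). Then \(e^{s\zeta U}x=e^{s\zeta}u+e^{-s\zeta}v\), while \(\|u+v\|=1\) and \(\|u-v\|\approx M\), which lets the two exponential weights be balanced directly. Third, optimise \(s\) separately in each direction \(\zeta\). The real-direction computation above suggests that \(2\mathrm e-1\) is exactly the constant produced by requiring \(\log\|e^{s\zeta U}\|\geqslant1\) at \(s=(2\mathrm e-1)/M\).
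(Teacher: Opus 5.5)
Your route differs from the paper's and does work. As written, though, it is incomplete at exactly the step that carries the content. The uniform scalar inequality
\[
       (2\mathrm e-1)\Bigl|\frac{\sinh w}{w}\Bigr|-|\cosh w|\geqslant\mathrm e
       \qquad(w=s\zeta,\ 0<s<1,\ \zeta\in\T)
\]
is only stated as an aim, with fallback remedies. Your assertion that \(\zeta=\pm i\) is the worst direction is also not proved. No remedy is needed, because the inequality holds with room to spare.

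Put \(c=\operatorname{Re}(w^2)\in[-1,1]\) and bound the Taylor tails by their values at \(|w|=1\):
\[
       \Bigl|\frac{\sinh w}{w}\Bigr|
       \geqslant\operatorname{Re}\Bigl(1+\frac{w^2}{6}\Bigr)-\Bigl(\sinh1-\frac76\Bigr)
       \geqslant0.991+\frac c6 .
\]
Also \(|1+w^2/2|^2=1+c+|w|^4/4\leqslant\frac54+c\), and \(\sqrt y\leqslant(y+\frac12)/\sqrt2\) for \(y\geqslant0\). Hence
\[
       |\cosh w|\leqslant\frac{7/4+c}{\sqrt2}+\Bigl(\cosh1-\frac32\Bigr)
       \leqslant\frac{7/4+c}{\sqrt2}+0.044 .
\]
The resulting lower bound \((2\mathrm e-1)(0.991+c/6)-(7/4+c)/\sqrt2-0.044\) is increasing in \(c\), because \((2\mathrm e-1)/6>1/\sqrt2\). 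At \(c=-1\) it exceeds \(3.08\).

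With your choice \(s=(2\mathrm e-1)/M\) one has \(M|\sinh(s\zeta)|=(2\mathrm e-1)|\sinh w/w|\). So this gives \(\|e^{s\zeta U}\|\geqslant\mathrm e\) for every \(\zeta\in\T\), and your argument concludes as planned via Lemma~\ref{lem:support-containment}. Your other ingredients are fine: \(e^{wU}=\cosh w+(\sinh w)U\), and \(\|e^{sa}\|=\lim_n\|(1+sa/n)^n\|\leqslant\lim_n\|1+sa/n\|^n=\exp(s\max\operatorname{Re}V(a))\) by \eqref{eq:support}.

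The paper argues at the level of vectors. It takes a near-norming \(x\), sets \(y=Ux/m\), and differentiates \(\|I+t\zeta U\|\) at \(t=0\) along \(x+r\zeta y\). Assuming \(h(\zeta)<M/(2\mathrm e-1)\), it integrates the differential inequality \(f'_+<f/M_0+r/m^2\) for \(f(r)=\|x+r\zeta y\|\) (Gr\"onwall) to contradict \(f(M_0)\geqslant M_0-1\).

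You instead use the integrated, exponential form of \eqref{eq:support} together with the exact formula for \(e^{wU}\). Everything then collapses to a scalar estimate, with no test vectors and no contradiction argument. The cost is the semigroup step and a two-variable calculus check, which you must actually carry out.

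Your method also yields more: \(h(\zeta)\geqslant\sup_{s>0}s^{-1}\log\bigl(|\sinh(s\zeta)|M-|\cosh(s\zeta)|\bigr)\). This behaves like \(\kappa M\) for large \(M\), improving on \(M/(2\mathrm e-1)\). For \(T^2=0\), where \(e^{s\zeta T}=1+s\zeta T\), the same method reproduces exactly the constant \(\kappa\) of Lemma~\ref{lem:nilpotent-fat}.
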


\begin{proof}
Put $M_0=2\mathrm e-1$.  Again it is enough to work in $\calB(X)$.  Fix $\zeta\in\T$ and write
\[
       h(\zeta)=\sup_{w\in V(U)}\operatorname{Re}(\zeta w).
\]
We shall prove $h(\zeta)\geqslant M/M_0$.  Suppose, to the contrary, that $h(\zeta)<M/M_0$.  Choose $\eps>0$ so small that
\[
       h(\zeta)<\frac{(1-\eps)M}{M_0}
       \qquad\text{and}\qquad
       (1-\eps)M>M_0,
\]
and then choose $x\in X$ with $\|x\|=1$ and
$m=\|Ux\|>(1-\eps)M$.  Put $y=Ux/m$.  Then $\|y\|=1$ and $Uy=x/m$.  Since
$m>2\mathrm e-1>1$, the vectors $x$ and $y$ are linearly independent:
otherwise $Ux=my$ would make $x$ an eigenvector of $U$ with an eigenvalue
of modulus $m$, contradicting $U^2=I$.  Consequently
$\|x+r\zeta y\|>0$ for every $r\geqslant0$.  For $r\geqslant0$ set
$f(r)=\|x+r\zeta y\|$.

Since
\[
       (I+t\zeta U)(x+r\zeta y)=x+(r+tm)\zeta y+\frac{tr\zeta^2}{m}x,
\]
the triangle inequality and \eqref{eq:support} give
\[
       h(\zeta)\geqslant m\frac{f'_+(r)}{f(r)}-\frac{r}{mf(r)}\qquad(r\geqslant0).
\]
By the choice of $x$, this implies the differential inequality
\[
       f'_+(r)<\frac1{M_0}f(r)+\frac r{m^2}\qquad(0\leqslant r\leqslant M_0).
\]
Multiplying by $\exp(-r/M_0)$ and integrating from $0$ to $M_0$, we obtain
\[
       f(M_0)<\mathrm e+\frac{\mathrm e M_0^2}{m^2}\left(1-\frac2{\mathrm e}\right).
\]
Since $m>M_0$, the right-hand side is strictly smaller than
\[
       \mathrm e+\mathrm e\left(1-\frac2{\mathrm e}\right)=2\mathrm e-2=M_0-1.
\]
This contradicts $f(M_0)\geqslant M_0-1$.  Hence $h(\zeta)\geqslant M/M_0$ for every $\zeta\in\T$, and Lemma~\ref{lem:support-containment} gives the asserted disc inclusion.
\end{proof}

\begin{lemma}\label{lem:two-point-schwarz-pick}
Let $0<a<1$ and let $q$ be holomorphic from $\D$ into $\overline\D$.  If
\[
       \alpha=\frac{q(a)+q(-a)}2,
       \qquad
       \beta=\frac{q(a)-q(-a)}2,
\]
then
\[
       |\alpha|^2+a^{-2}|\beta|^2\leqslant1.
\]
\end{lemma}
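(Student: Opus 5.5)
The plan is to read the inequality off the Pick positivity for the two nodes \(\pm a\), phrased through the Szeg\H{o} kernels of \(H^2\).

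\textbf{Step 1: the adjoint of the multiplication operator.} Since \(q\) maps \(\D\) into \(\overline\D\), multiplication \(M_q f=qf\) is a contraction on \(H^2\). For the Szeg\H{o} kernels \(k_z(\xi)=(1-\bar z\xi)^{-1}\) one has the standard identity \(M_q^*k_z=\overline{q(z)}\,k_z\). Put \(w_1=q(a)\) and \(w_2=q(-a)\); both nodes are real, so \(k_{\pm a}(\xi)=(1\mp a\xi)^{-1}\). Then \(M_q^*\) is a contraction that leaves \(\operatorname{span}\{k_a,k_{-a}\}\) invariant and acts there by \(k_a\mapsto\bar w_1k_a\) and \(k_{-a}\mapsto\bar w_2k_{-a}\).

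\textbf{Step 2: pass to the even/odd basis.} Set
\[
 e=\tfrac12(k_a+k_{-a}),\qquad o=\tfrac12(k_a-k_{-a}).
\]
Explicitly \(e(\xi)=(1-a^2\xi^2)^{-1}\) is even and \(o(\xi)=a\xi(1-a^2\xi^2)^{-1}\) is odd. Hence \(e\) and \(o\) are orthogonal in \(H^2\), and summing the Taylor coefficients gives
\[
 \|e\|^2=\frac1{1-a^4},\qquad \|o\|^2=\frac{a^2}{1-a^4}.
\]
A direct computation with the definitions of \(\alpha\) and \(\beta\) gives
\[
 M_q^*e=\bar\alpha e+\bar\beta o,\qquad M_q^*o=\bar\beta e+\bar\alpha o.
\]

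\textbf{Step 3: apply the contraction to \(o\).} Orthogonality of \(e\) and \(o\) turns \(\|M_q^*o\|^2\leqslant\|o\|^2\) into
\[
 |\beta|^2\|e\|^2+|\alpha|^2\|o\|^2\leqslant\|o\|^2.
\]
Dividing by \(\|o\|^2=a^2/(1-a^4)\) yields \(a^{-2}|\beta|^2+|\alpha|^2\leqslant1\), which is the claim. Applying the contraction to \(e\) instead only gives the weaker bound \(|\alpha|^2+a^2|\beta|^2\leqslant1\), so the choice of \(o\) as test vector is essential.

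\textbf{Main obstacle.} There is no real analytic difficulty. The only point needing care is choosing the test vector: it must be the odd kernel \(o\), whose norm is small, of order \(a\). The remaining ingredients are the kernel identity \(M_q^*k_z=\overline{q(z)}k_z\) and the contractivity of \(M_q\), and both hold for every \(q\) with \(\|q\|_{\infty}\leqslant1\), including unimodular constants. As a sanity check, a unimodular constant gives \(\beta=0\) and \(|\alpha|=1\), so equality holds; this is consistent with the bound.
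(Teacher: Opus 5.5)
Your proof is correct and is essentially the paper's argument: contractivity of $M_q^*$ on $\operatorname{span}\{k_a,k_{-a}\}$ is exactly the positive semidefiniteness of the two-point Pick matrix that the paper quotes from Nevanlinna--Pick, and your test vector $o=\tfrac12(k_a-k_{-a})$ corresponds to the paper's test vector $(1,-1)$. The only difference is that you derive the needed (necessity) half of Pick's theorem inline and diagonalise the Gram matrix through the even/odd basis, which makes the computation self-contained.
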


\begin{proof}
Put $u=q(a)=\alpha+\beta$ and $v=q(-a)=\alpha-\beta$.  By the Nevanlinna--Pick theorem, in its Pick-matrix form \cite[Chapter~2]{AM} (see also Pick's original paper \cite{Pick}), the Pick matrix
\[
       \begin{pmatrix}
       \dfrac{1-|u|^2}{1-a^2} & \dfrac{1-u\overline v}{1+a^2}\\[1.2ex]
       \dfrac{1-v\overline u}{1+a^2} & \dfrac{1-|v|^2}{1-a^2}
       \end{pmatrix}
\]
is positive semidefinite.  Testing it on the vector $(1,-1)$ gives
\[
       \frac{2(1-|\alpha|^2-|\beta|^2)}{1-a^2}
       -\frac{2(1-|\alpha|^2+|\beta|^2)}{1+a^2}\geqslant0,
\]
which is equivalent to $|\beta|^2\leqslant a^2(1-|\alpha|^2)$.
\end{proof}

\begin{proof}[Proof of Theorem~\ref{thm:degree-two}]
The lower bound is Proposition~\ref{prop:gamma-n-lower} with $n=2$.  We now prove the upper bound.  Set $M_0=2\mathrm e-1$ and $C_2=\sqrt{1+M_0^2}$.  Let $a$ be algebraic of degree at most two in a unital Banach algebra $\calA$.  If $a$ is scalar there is nothing to prove.  If the minimal polynomial of $a$ has a double root, then, after an affine change, $T^2=0$.  Let $K=\sup_{V(T)}|p|>0$ and put $M=\|T\|$.  The case $T=0$ is trivial.  Lemma \ref{lem:nilpotent-fat} gives $\kappa M\overline\D\subseteq V(T)$, and Schwarz--Pick applied to $q(z)=p(\kappa Mz)/K$ yields
\[
       |p'(0)|\leqslant \frac{K}{\kappa M}\left(1-\frac{|p(0)|^2}{K^2}\right).
\]
Writing $s=|p(0)|/K$ and using $p(T)=p(0)1+p'(0)T$, we get
\[
       \frac{\|p(T)\|}{K}\leqslant s+\frac1\kappa(1-s^2)
       \leqslant \frac1\kappa+\frac\kappa4.
\]
Since $\kappa<1$ and $\kappa\geqslant(\log4)/5>1/4$, while $t\mapsto t^{-1}+t/4$ is decreasing on $(0,1)$, this is at most $65/16$, which is smaller than $C_2$.

It remains to consider the case where the minimal polynomial has two distinct roots.  After an affine change, $U^2=1$.  Write $M=\|U\|$ and $K=\sup_{V(U)}|p|>0$.  As usual,
\[
       p(U)=\alpha1+\beta U,
       \qquad
       \alpha=\frac{p(1)+p(-1)}2,
       \quad
       \beta=\frac{p(1)-p(-1)}2.
\]
If $M\leqslant M_0$, then $\{ -1,1\}\subseteq\sigma(U)\subseteq V(U)$, and therefore $|p(1)|,|p(-1)|\leqslant K$.  Hence $|\alpha|^2+|\beta|^2\leqslant K^2$, and Cauchy's inequality in $\C^2$ gives
\[
       \|p(U)\|\leqslant |\alpha|+M|\beta|
       \leqslant K\sqrt{1+M^2}\leqslant C_2K.
\]
Assume now that $M>M_0$.  Lemma \ref{lem:involution-fat} gives $r\overline\D\subseteq V(U)$, where $r=M/M_0>1$.  Apply Lemma \ref{lem:two-point-schwarz-pick} to $q(z)=p(rz)/K$ and $a=1/r=M_0/M$.  Since $q(a)=p(1)/K$ and $q(-a)=p(-1)/K$, we obtain
\[
       \frac{|\alpha|^2}{K^2}+r^2\frac{|\beta|^2}{K^2}\leqslant1.
\]
Consequently
\[
       \|p(U)\|\leqslant |\alpha|+M|\beta|
       \leqslant K\sqrt{1+\frac{M^2}{r^2}}
       =C_2K.
\]
Affine invariance of $\Psi$ finishes the proof.
\end{proof}

The same ratio-drop idea gives a particularly explicit estimate for bounded-order nilpotents.

\begin{proposition}\label{prop:nilpotents}
Let \(m\geqslant2\), and put
\[
       \eta=\frac{\log2}{5},
       \qquad
       \gamma=\frac{\eta}{4},
       \qquad
       \delta_m=\frac{\eta}{2}\gamma^{m-2},
       \qquad
       N_m=\sum_{j=0}^{m-1}\delta_m^{-j}.
\]
If \(T^m=0\) in a unital Banach algebra \(\calA\), then
\[
       \Psi(T,\calA)\leqslant N_m.
\]
\end{proposition}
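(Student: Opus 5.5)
Pass to operators via Lemma~\ref{lem:left-regular} and assume \(T\neq0\), \(T^m=0\) in \(\calB(X)\).  Since \(p(T)=\sum_{j=0}^{m-1}\frac{p^{(j)}(0)}{j!}T^j\), the whole task reduces to two estimates.  First, a \emph{fat disc}: \(\rho\overline\D\subseteq V(T)\) for an explicit radius \(\rho>0\).  Second, powers that are not too large compared with \(\rho\): \(\|T^j\|\leqslant(\rho/\delta_m)^j\).  If \(K=\sup_{V(T)}|p|\), Cauchy's estimate on the disc of radius \(\rho\) gives \(|p^{(j)}(0)|/j!\leqslant K\rho^{-j}\).  Hence
\[
\|p(T)\|\leqslant K\sum_{j=0}^{m-1}\rho^{-j}\|T^j\|\leqslant K\sum_{j=0}^{m-1}\delta_m^{-j}=N_mK .
\]
So I need a scale \(\rho\) with \(\|T^j\|\leqslant(\rho/\delta_m)^j\) for all \(j<m\) and \(\rho\overline\D\subseteq V(T)\).

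Choosing the scale is a ratio-drop argument.  Put \(\mu_j=\|T^j\|^{1/j}\) for \(1\leqslant j\leqslant m-1\), so \(\mu_1=\|T\|\).  The sequence \(\mu_1,\ldots,\mu_{m-1},\mu_m=0\) must drop somewhere.  Walk along \(j=1,2,\dots\) and stop at the first index \(k\) where \(\|T^{k+1}\|\leqslant\gamma\,\|T\|\,\|T^k\|\)-type decay occurs, more precisely where \(\|T^{k+1}\|\) is small relative to \(\|T^k\|\) at the current scale.  Before stopping, each step loses at most a factor \(\gamma\) in scale.  After at most \(m-2\) steps one gets a scale \(s\geqslant\gamma^{m-2}\|T\|\)-ish.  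It satisfies \(\|T^j\|\leqslant C^j s^j\)-type bounds, and at index \(k\) we have \(\|T^{k+1}x\|\ll s\|T^kx\|\) for a suitably normalised \(x\).  Then \(\rho=\frac{\eta}{2}s\) matches \(\delta_m=\frac\eta2\gamma^{m-2}\).  The constants \(\gamma=\eta/4\) and \(\eta=\frac{\log2}{5}\) should come out of this bookkeeping, just as \(\kappa\geqslant\log4/5\) appeared in Theorem~\ref{thm:degree-two}.

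The fat-disc step copies Lemma~\ref{lem:nilpotent-fat}.  At the drop index, pick \(x\) with \(\|T^kx\|\) near \(\|T^k\|\) and set \(y=T^kx/\|T^kx\|\), so \(Ty\) is relatively small.  For a fixed direction \(\zeta\), study \(f(r)=\|u+r\zeta v\|\) along a two-vector pencil where \(T\) acts almost as a shift from \(u\) to \(v\), for instance \(u=T^{k-1}x\) normalised and \(v=y\).  Run the same logarithmic-integration argument over \(r\in[0,R]\) with \(R\) bounded (\(R=5\) gives \(\log(R-1)/R\geqslant\log 4/5=2\eta\)).  The error term from \(Tv\neq0\) must be absorbed, as in Lemma~\ref{lem:involution-fat}.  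This gives a support function \(\geqslant\eta\,s\) in every direction, and Lemma~\ref{lem:support-containment} yields \(\rho\overline\D\subseteq V(T)\).  When \(k=1\) this is literally Lemma~\ref{lem:nilpotent-fat} with a perturbation.

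I expect the main obstacle to be making the drop criterion simultaneously strong enough and compatible with the power bounds.  It must be strong enough that the perturbation \(\|Tv\|\) costs at most half of \(2\eta\) in the support function, which explains the factor \(\eta/2\).  It must also leave the bounds \(\|T^j\|\leqslant(\rho/\delta_m)^j\) needed for the Cauchy sum.  If the geometric drop \(\gamma^{m-2}\) does not close directly, I would first try setting up the stopping rule on the normalised ratios \(\|T^{j+1}\|/(\|T^j\|\,\|T\|\gamma^{j})\) and verify the two requirements separately.
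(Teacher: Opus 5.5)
Your outline has the same architecture as the paper's proof: a disc of radius \(\delta_m\|T\|\) in \(V(T)\), Cauchy estimates, a ratio drop producing a pencil \(Tu=av\) with \(\|Tv\|\leqslant\gamma a\) and \(a\geqslant\gamma^{m-2}(1-\eps)\|T\|\), and a logarithmic-derivative bound for \(f(r)=\|u+r\zeta v\|\). However, the obstacle you single out is not one, and the step that actually needs care has a gap.

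The power bounds are automatic. Once \(\delta_m\|T\|\overline\D\subseteq V(T)\), submultiplicativity \(\|T^j\|\leqslant\|T\|^j\) already gives \(\|p(T)\|\leqslant\sum_j\|T\|^j(\delta_m\|T\|)^{-j}K=N_mK\). So the stopping rule only has to serve the disc. For that purpose, your first candidate \(\|T^{k+1}\|\leqslant\gamma\|T\|\,\|T^k\|\) fails for \(k\geqslant2\): minimality only yields \(\|T^{k+1}\|/\|T^k\|<\|T^k\|/\|T^{k-1}\|\), not a drop by the factor \(\gamma\). Your fallback (the first \(k\geqslant1\) with \(\|T^{k+1}\|/\|T^k\|\leqslant\gamma^k\|T\|\)) does work. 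So does the paper's orbit version: \(x_0\) almost norms \(T\), \(R_j=\|T^{j+1}x_0\|/\|T^jx_0\|\), and \(k\) is the first index with \(R_{k+1}\leqslant\gamma R_k\).

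The real gap is the absorption of the error term \(r\|Tv\|/f(r)\). Integrating \(f'_+/f\) over \([0,5]\) gives some \(r\) with \(f'_+(r)/f(r)\geqslant2\eta\), but nothing stops that \(r\) from lying near \(1\). There \(f(r)\) has no absolute lower bound; the only general one, \(f(r)\geqslant(a-r\|Tv\|)/\|T\|\), is of order \(\gamma^{m-2}\). So the error is not bounded by \(\eta a\), whatever fixed drop factor you use.

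The paper integrates over \([2,7]\) instead. There \(f(r)\geqslant r-1\) gives \(r/f(r)\leqslant2\), while \(f(2)\leqslant3\) and \(f(7)\geqslant6\) give \(\log2\) over an interval of length \(5\). Hence some point has \(f'_+/f\geqslant\eta\), and the support function is at least \(\eta R_k-2R_{k+1}\geqslant(\eta-2\gamma)R_k=\tfrac\eta2R_k\geqslant\delta_m(1-\eps)\|T\|\). That, and not \(\log4/5\), is where \(\eta=\log2/5\), \(\gamma=\eta/4\) and the factor \(\eta/2\) come from.

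Alternatively, a genuine Gr\"onwall argument as in Lemma~\ref{lem:involution-fat} works on \([0,5]\). If the support function were below \(\eta a\), then \(f'_+<\eta f+\gamma r\), so \(f(5)\leqslant\mathrm e^{5\eta}(1+25\gamma/2)=2(1+25\gamma/2)<4\), contradicting \(f(5)\geqslant4\). This is a different mechanism from the ``\(2\eta\) minus error'' bookkeeping you describe, and you would need to carry it out explicitly.
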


\begin{proof}
As before we work in $\calB(X)$.  It is enough to prove that $V(T)$ contains a disc of radius $\delta_m\|T\|$, where $\delta_m>0$ depends only on $m$, for Cauchy's estimate then gives
\[
       \|p(T)\|\leqslant \sum_{j=0}^{m-1} \delta_m^{-j}\sup_{V(T)}|p| .
\]
The case $T=0$ is trivial.  Put $M=\|T\|>0$, choose $x_0$ with $\|x_0\|=1$ and $\|Tx_0\|>(1-\eps)M$, and set $x_j=T^jx_0$.  Let $s\leqslant m$ be the first index such that $x_s=0$.  Then $x_j\neq0$ for $j<s$, and we put
\[
       R_j=\frac{\|x_{j+1}\|}{\|x_j\|}\qquad(0\leqslant j\leqslant s-1);
\]
thus $R_{s-1}=0$ and $R_0>(1-\eps)M$.  Retain the constants $\eta$ and $\gamma$ from the statement.  Since $R_0>0$ and $R_{s-1}=0$, there is a first $k\leqslant s-2\leqslant m-2$ with $R_{k+1}\leqslant\gamma R_k$.  By minimality,
\[
       R_k\geqslant \gamma^{m-2}(1-\eps)M.
\]

Let $v=x_k/\|x_k\|$ and $w=x_{k+1}/\|x_{k+1}\|$.  Thus $Tv=R_kw$ and, if $R_{k+1}>0$, $Tw=R_{k+1}u$ for some unit vector $u$; if $R_{k+1}=0$ the following error term is simply absent.  For $\zeta\in\T$ put $f(r)=\|v+r\zeta w\|$.  On $[2,7]$ we have $f(r)\geqslant r-1>0$, $f(2)\leqslant3$, and $f(7)\geqslant6$.  Since $f$ is convex,
\[
       \int_2^7\frac{f'_+(s)}{f(s)}\,ds
       =\log f(7)-\log f(2)\geqslant\log2,
\]
so there is a point $r\in[2,7]$ with $f'_+(r)/f(r)\geqslant\eta$.  Moreover $f(r)\geqslant r-1$, whence $r/f(r)\leqslant2$.  Using
\[
 (I+t\zeta T)(v+r\zeta w)=v+(r+tR_k)\zeta w+tr\zeta^2R_{k+1}u
\]
when $R_{k+1}>0$ and the analogous identity without the last term when $R_{k+1}=0$, we get
\[
       \|I+t\zeta T\|\geqslant
       \frac{f(r+tR_k)-trR_{k+1}}{f(r)}.
\]
Formula \eqref{eq:support} therefore yields
\[
       \sup_{z\in V(T)}\operatorname{Re}(\zeta z)
       \geqslant R_k\frac{f'_+(r)}{f(r)}-R_{k+1}\frac{r}{f(r)}
       \geqslant \eta R_k-2R_{k+1}
       \geqslant \frac\eta2 R_k.
\]
Letting $\eps\downarrow0$ gives the disc inclusion with
$\delta_m=(\eta/2)\gamma^{m-2}$.
\end{proof}

\subsection{Uniform finiteness in every fixed degree}

The next ratio-drop estimate is the large-norm ingredient in the proof that every \(\Gamma_n\) is finite.

\begin{lemma}\label{lem:general-large-disc}
Fix an integer \(m\geqslant2\), and put
\[
       \eta=\frac{\log2}{5},\qquad
       \gamma=\frac{\eta}{4},\qquad
       \delta_m=\frac{\eta}{2}\gamma^{m-2},\qquad
       B_m=2^m\gamma^{-(m-1)}.
\]
Let \(T\in\calB(X)\), and suppose that \(q(T)=0\), where
\[
       q(z)=z^m+c_{m-1}z^{m-1}+\cdots+c_0
\]
is monic and all roots of \(q\) belong to \(\overline\D\).  If \(\|T\|>B_m\), then
\[
       \delta_m\|T\|\overline\D\subseteq V(T).
\]
\end{lemma}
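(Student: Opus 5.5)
We adapt the ratio-drop argument of Proposition~\ref{prop:nilpotents}, replacing the relation \(T^m=0\) by \(q(T)=0\). Put \(M=\|T\|>B_m\). Fix \(\eps>0\), choose \(x_0\) with \(\|x_0\|=1\) and \(\|Tx_0\|>(1-\eps)M\), and set \(x_j=T^jx_0\). While \(x_j\neq0\), define the ratios \(R_j=\|x_{j+1}\|/\|x_j\|\), so that \(R_0>(1-\eps)M\). In the nilpotent case the chain ended with a zero ratio. Here the relation \(q(T)=0\) plays that role: it gives
\[
x_m=-\sum_{j<m}c_jx_j ,
\]
and since the roots lie in \(\overline\D\), Vieta gives \(|c_j|\leqslant\binom{m}{j}\), hence \(\sum_j|c_j|\leqslant 2^m\).

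\textbf{Step 1: a ratio drop among \(R_0,\dots,R_{m-1}\).} Suppose, for contradiction, that \(R_{j+1}>\gamma R_j\) for every \(0\leqslant j\leqslant m-2\). Then \(R_j\geqslant\gamma^jR_0\), so \(\|x_j\|\) grows quickly. On one side,
\[
\|x_m\|=\prod_{j<m}R_j\,\|x_0\| .
\]
On the other side,
\[
\|x_m\|\leqslant 2^m\max_{j<m}\|x_j\|=2^m\|x_{m-1}\| ,
\]
the maximum being attained at \(j=m-1\) once all \(R_j>1\). This forces \(R_{m-1}\leqslant 2^m\). But \(R_{m-1}\geqslant\gamma^{m-1}(1-\eps)M>\gamma^{m-1}(1-\eps)B_m=2^m(1-\eps)\), which is a contradiction for \(\eps\) small.

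The monotonicity claim needs care. I would argue that \(R_j\geqslant\gamma^jR_0>2^m\geqslant1\) for \(j\leqslant m-1\), which holds provided \(M>B_m\); this is exactly where the constant \(B_m=2^m\gamma^{-(m-1)}\) is designed to enter. Hence there is a first index \(k\leqslant m-2\) with \(R_{k+1}\leqslant\gamma R_k\). By minimality, \(R_k\geqslant\gamma^{k}R_0\geqslant\gamma^{m-2}(1-\eps)M\). If the chain terminates early with some \(x_s=0\), the argument is exactly that of Proposition~\ref{prop:nilpotents}.

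\textbf{Step 2: support-function estimate.} With \(v=x_k/\|x_k\|\), \(w=x_{k+1}/\|x_{k+1}\|\), \(Tv=R_kw\), and \(Tw=R_{k+1}u\) for a unit vector \(u\), the argument of Proposition~\ref{prop:nilpotents} applies verbatim. We pick \(r\in[2,7]\) with \(f'_+(r)/f(r)\geqslant\eta\), where \(f(r)=\|v+r\zeta w\|\). Formula~\eqref{eq:support} then gives
\[
\sup_{V(T)}\operatorname{Re}(\zeta z)\geqslant\eta R_k-2R_{k+1}\geqslant\tfrac\eta2R_k\geqslant\delta_m(1-\eps)M .
\]
Let \(\eps\downarrow0\) and apply Lemma~\ref{lem:support-containment}.

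\textbf{Main obstacle.} The main difficulty is Step 1, specifically the case in which the first drop would occur only at index \(m-1\), or in which the \(\|x_j\|\) are not monotone. The key observation is that the no-drop assumption gives \(R_j\geqslant\gamma^{m-1}R_0>2^m\) for every \(j\leqslant m-1\). This makes \(\|x_{m-1}\|\) dominate all earlier \(\|x_j\|\), and the recurrence coming from \(q\) then caps \(R_{m-1}\) by \(\sum_j|c_j|\leqslant2^m\), which is a contradiction.
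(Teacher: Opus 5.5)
Your proposal is correct and is essentially the paper's proof: the same ratio-drop along \(x_j=T^jx_0\), with \(q(T)x_0=0\) and \(\sum_j|c_j|<2^m\) ruling out the no-drop case, followed by the unchanged two-vector support-function estimate of Proposition~\ref{prop:nilpotents}. One detail needs fixing in Step~1: as written, \(R_{m-1}>2^m(1-\eps)\) does not contradict \(R_{m-1}\leqslant2^m\), so you should fix \(\eps\) at the outset with \((1-\eps)M>B_m\); this gives \(R_{m-1}>\gamma^{m-1}R_0>2^m\) strictly, exactly as in your own monotonicity paragraph.
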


\begin{proof}
Put \(M=\|T\|\), fix \(\zeta\in\T\), and choose \(\eps>0\) so small that \((1-\eps)M>B_m\).  Choose \(x_0\in X\) with \(\|x_0\|=1\) and \(\|Tx_0\|>(1-\eps)M\), and set \(x_j=T^jx_0\).  Whenever \(x_j\neq0\), write
\[
       R_j=\frac{\|x_{j+1}\|}{\|x_j\|}.
\]
If some \(x_s\) with \(1\leqslant s\leqslant m\) is zero, take the least such \(s\).  Then \(R_{s-1}=0\), so there is a first \(k\leqslant s-2\leqslant m-2\) with \(R_{k+1}\leqslant\gamma R_k\).

Suppose instead that \(x_0,\ldots,x_m\) are all non-zero.  If no such drop occurred for \(0\leqslant k\leqslant m-2\), then
\[
       R_j>\gamma^jR_0\qquad(0\leqslant j\leqslant m-1).
\]
In particular, \(R_j>\gamma^{m-2}R_0>1\) for \(0\leqslant j\leqslant m-2\).  Since the roots of \(q\) lie in \(\overline\D\),
\[
       \sum_{j=0}^{m-1}|c_j|\leqslant2^m-1<2^m.
\]
The relation \(q(T)x_0=0\) therefore gives
\[
\begin{split}
       R_{m-1}
       &\leqslant |c_{m-1}|+
       \sum_{j=0}^{m-2}\frac{|c_j|}{R_jR_{j+1}\cdots R_{m-2}}\\
       &\leqslant\sum_{j=0}^{m-1}|c_j|<2^m,
\end{split}
\]
whereas \(R_{m-1}>\gamma^{m-1}R_0>2^m\), a contradiction.  Thus in every case there is a first \(k\leqslant m-2\) such that
\[
       R_{k+1}\leqslant\gamma R_k,
       \qquad
       R_k\geqslant\gamma^{m-2}(1-\eps)M.
\]
The latter inequality and the definition of \(B_m\) imply \(R_k>1\).  Hence the unit vectors
\[
       v=\frac{x_k}{\|x_k\|},\qquad
       w=\frac{x_{k+1}}{\|x_{k+1}\|}
\]
are linearly independent: otherwise \(v\) would be an eigenvector of \(T\) with eigenvalue of modulus \(R_k>1\), contrary to \(\sigma(T)\subseteq\overline\D\).  Set
\[
       f(r)=\|v+r\zeta w\|\qquad(r\geqslant0).
\]
This function is convex and satisfies $f(r)\geqslant r-1>0$ on $[2,7]$.  We have \(Tv=R_kw\), and either \(Tw=R_{k+1}u\) for a unit vector \(u\), or the corresponding term is zero.  As in Proposition~\ref{prop:nilpotents}, some \(r\in[2,7]\) satisfies
\[
       \frac{f'_+(r)}{f(r)}\geqslant\eta,
       \qquad
       \frac r{f(r)}\leqslant2.
\]
For \(t>0\),
\[
       (I+t\zeta T)(v+r\zeta w)
       =v+(r+tR_k)\zeta w+tr\zeta^2R_{k+1}u,
\]
with the last term omitted when \(R_{k+1}=0\).  Hence
\[
       \|I+t\zeta T\|
       \geqslant
       \frac{f(r+tR_k)-trR_{k+1}}{f(r)}.
\]
Subtracting \(1\), dividing by \(t\), and using
Formula~\eqref{eq:support}, we obtain
\[
\begin{split}
       \sup_{z\in V(T)}\operatorname{Re}(\zeta z)
       &\geqslant
       R_k\frac{f'_+(r)}{f(r)}-R_{k+1}\frac r{f(r)}\\
       &\geqslant \eta R_k-2R_{k+1}
       \geqslant\frac\eta2R_k
       \geqslant\delta_m(1-\eps)M.
\end{split}
\]
Letting \(\eps\downarrow0\) and using Lemma~\ref{lem:support-containment} proves the disc inclusion.
\end{proof}

\begin{lemma}\label{lem:general-large-calculus}
Fix \(m\geqslant2\), and retain the constants from Lemma~\ref{lem:general-large-disc}.  Let \(T\in\calB(X)\) have monic minimal polynomial of degree \(m\), all of whose roots belong to \(\overline\D\).  If
\[
       M:=\|T\|>R_m:=\max\{B_m,2/\delta_m,1\},
\]
then
\[
       \|p(T)\|\leqslant E_m\sup_{z\in V(T)}|p(z)|
       \qquad(p\in\C[z]),
\]
where one may take
\[
       E_m=1+2\sum_{j=1}^{m-1}\left(\frac4{\delta_m}\right)^j.
\]
\end{lemma}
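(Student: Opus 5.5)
The plan is to combine the disc inclusion of Lemma~\ref{lem:general-large-disc} with the Newton--Hermite form of the polynomial calculus and Cauchy's divided-difference formula~\eqref{eq:cauchy-divided-difference}. Let \(\lambda_0,\ldots,\lambda_{m-1}\) be the roots of the minimal polynomial \(\mu\) of \(T\), listed with multiplicity; all lie in \(\overline\D\). Since \(M>B_m\), Lemma~\ref{lem:general-large-disc} gives \(\rho\overline\D\subseteq V(T)\) with \(\rho=\delta_mM\). Because \(M>2/\delta_m\), we have \(\rho>2\). Put \(K=\sup_{V(T)}|p|\), so that \(|p|\leqslant K\) on \(\rho\overline\D\).

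First I would reduce \(p(T)\) to the Hermite interpolant. The polynomial \(p-H_{\boldsymbol\lambda}p\) vanishes at every \(\lambda_k\) with the prescribed multiplicity, so it is divisible by \(\mu\). Since \(\mu(T)=0\),
\[
       p(T)=\sum_{j=0}^{m-1}p[\lambda_0,\ldots,\lambda_j]\,N_j(T).
\]
Next I would bound the coefficients by applying \eqref{eq:cauchy-divided-difference} on the circle \(|z|=\rho\). On that circle \(|z-\lambda_k|\geqslant\rho-1\geqslant\rho/2\), because \(\rho\geqslant2\). Hence
\[
       |p[\lambda_0,\ldots,\lambda_j]|\leqslant\frac{\rho K}{(\rho/2)^{j+1}}=2K\Bigl(\frac{2}{\rho}\Bigr)^{j}.
\]
For \(j=0\) the better bound \(|p(\lambda_0)|\leqslant K\) holds directly, since \(\lambda_0\in\sigma(T)\subseteq V(T)\).

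Then I would bound the Newton polynomials: \(\|N_j(T)\|\leqslant\prod_{k<j}(M+|\lambda_k|)\leqslant(M+1)^j\leqslant(2M)^j\), using \(M>1\). Combining the two estimates,
\[
       \|p(T)\|\leqslant K+\sum_{j=1}^{m-1}2K\Bigl(\frac{4M}{\rho}\Bigr)^{j}=K\Bigl(1+2\sum_{j=1}^{m-1}\Bigl(\frac4{\delta_m}\Bigr)^{j}\Bigr)=E_mK.
\]
This is exactly the claimed bound. Here the factor \(M^j\) from \(N_j(T)\) cancels against \(\rho^{-j}\) because \(\rho\) is proportional to \(M\). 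That cancellation is the whole point of the large-norm regime.

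I expect no step to be a serious obstacle. The only care needed is in the confluent case, where repeated roots require the divided-difference and Cauchy formulas in their confluent form, as provided by the cited formulas. One must also check that the contour \(|z|=\rho\) encloses all the nodes and keeps distance at least \(\rho/2\) from them, which is where the hypothesis \(M>2/\delta_m\) is used.
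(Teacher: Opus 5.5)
Your proposal is correct and follows essentially the same route as the paper: the disc inclusion \(\delta_mM\,\overline\D\subseteq V(T)\) from Lemma~\ref{lem:general-large-disc}, reduction of \(p(T)\) to its Newton--Hermite interpolant modulo the minimal polynomial, Cauchy's divided-difference bound on the circle \(|z|=\delta_mM\), and \(\|N_j(T)\|\leqslant(M+1)^j\), which yield the same constant \(E_m\). The only difference is cosmetic: you apply \(\rho-1\geqslant\rho/2\) and \(M+1\leqslant2M\) immediately, whereas the paper records the equivalent inequalities \(\rho/(\rho-1)\leqslant2\) and \((M+1)/(\rho-1)\leqslant4/\delta_m\).
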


\begin{proof}
Put \(K=\sup_{V(T)}|p|\), and assume \(K>0\).  By Lemma~\ref{lem:general-large-disc}, the disc \(\varrho\overline\D\), where \(\varrho=\delta_mM\geqslant2\), is contained in \(V(T)\).  List the roots of the minimal polynomial, with multiplicity, as
\(\lambda_1,\ldots,\lambda_m\), and put
\[
       Q_0(z)=1,
       \qquad
       Q_j(z)=\prod_{i=1}^j(z-\lambda_i)
       \quad(1\leqslant j\leqslant m-1).
\]
The Newton--Hermite polynomial from \eqref{eq:newton-hermite-form} is
\[
       h(z)=\sum_{j=0}^{m-1}d_jQ_j(z),
       \qquad
       d_0=p(\lambda_1),
       \quad
       d_j=p[\lambda_1,\ldots,\lambda_{j+1}]\quad(j\geqslant1).
\]
It has the same interpolation data as \(p\) at every root, with the
multiplicity prescribed by the minimal polynomial.  Hence \(p-h\) is
divisible by that minimal polynomial and
\[
       p(T)=h(T)=\sum_{j=0}^{m-1}d_jQ_j(T).
\]
For \(|z|=\varrho\), every root satisfies
\(|z-\lambda_i|\geqslant\varrho-1\), while \(|p(z)|\leqslant K\) because
\(\varrho\overline\D\subseteq V(T)\).  Formula~\eqref{eq:cauchy-divided-difference}
therefore yields, for \(1\leqslant j\leqslant m-1\),
\[
\begin{split}
       |d_j|
       &\leqslant
       \frac{1}{2\pi}(2\pi\varrho)
       \frac{K}{(\varrho-1)^{j+1}}\\
       &=\frac{K\varrho}{(\varrho-1)^{j+1}}.
\end{split}
\]
Also \(|d_0|\leqslant K\), because \(\lambda_1\in\sigma(T)\subseteq V(T)\), and
\[
       \|Q_j(T)\|\leqslant(M+1)^j.
\]
Since \(\varrho\geqslant2\) and \(M\geqslant1\),
\[
       \frac{\varrho}{\varrho-1}\leqslant2,
       \qquad
       \frac{M+1}{\varrho-1}\leqslant\frac4{\delta_m}.
\]
Consequently,
\[
\begin{split}
       \|p(T)\|
       &\leqslant |d_0|+\sum_{j=1}^{m-1}|d_j|\,\|Q_j(T)\|\\
       &\leqslant K+2K\sum_{j=1}^{m-1}
       \left(\frac{M+1}{\varrho-1}\right)^j\\
       &\leqslant
       \left[1+2\sum_{j=1}^{m-1}
       \left(\frac4{\delta_m}\right)^j\right]K
       =E_mK.
\end{split}
\]
\end{proof}

\begin{lemma}\label{lem:root-partition}
Let \(\Lambda\subseteq\C\) consist of \(s\geqslant2\) distinct points and have diameter one.  Then there is a partition \(\Lambda=\Lambda_0\mathbin{\dot\cup}\Lambda_1\) into non-empty sets such that
\[
       \operatorname{dist}(\Lambda_0,\Lambda_1)\geqslant\frac1{s-1}.
\]
\end{lemma}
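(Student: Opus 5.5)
We use a graph-connectivity argument. Fix the threshold \(\tau=1/(s-1)\) and form the graph \(G\) on the vertex set \(\Lambda\) in which two distinct points \(\lambda,\mu\) are joined by an edge whenever \(|\lambda-\mu|<\tau\). The two cases are whether \(G\) is connected or not.

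\emph{Case 1: \(G\) is disconnected.} Let \(\Lambda_0\) be the vertex set of one connected component and let \(\Lambda_1=\Lambda\setminus\Lambda_0\). Both sets are non-empty. No edge joins \(\Lambda_0\) to \(\Lambda_1\), so every \(\lambda\in\Lambda_0\) and \(\mu\in\Lambda_1\) satisfy \(|\lambda-\mu|\geqslant\tau\). Since the sets are finite, this gives \(\operatorname{dist}(\Lambda_0,\Lambda_1)\geqslant\tau\), which is the claim.

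\emph{Case 2: \(G\) is connected.} We show this case cannot occur. The diameter of \(\Lambda\) is one and \(\Lambda\) is finite, so there are points \(\lambda,\mu\in\Lambda\) with \(|\lambda-\mu|=1\). Connectivity gives a simple path in \(G\) from \(\lambda\) to \(\mu\). A simple path visits each of the \(s\) vertices at most once, so it has at most \(s-1\) edges, and each edge has length strictly less than \(\tau\). The triangle inequality then gives
\[
       1=|\lambda-\mu|<(s-1)\tau=1,
\]
a contradiction.

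The only point needing care is the strict inequality. It comes from defining the edges by the strict condition \(|\lambda-\mu|<\tau\), which is exactly what makes the bound \(\geqslant\tau\) in Case 1 non-strict. There is no real obstacle here. Note also that the argument never uses that the points lie in \(\C\), only the triangle inequality, so it holds in any metric space.
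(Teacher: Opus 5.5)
Your proof is correct, and it follows a closely related but more elementary route than the paper's.

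The paper builds a minimum spanning tree on $\Lambda$ with edge weights $|\lambda-\mu|$. The tree path between two points at distance one has at most $s-1$ edges, so it contains an edge $e$ of length at least $1/(s-1)$. Deleting $e$ gives a cut, and the paper then invokes the cut optimality condition for minimum spanning trees (cited from Korte--Vygen) to conclude that every edge crossing this cut is at least as long as $e$.

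Your threshold graph at level $\tau=1/(s-1)$ is the single-linkage counterpart of this. The same path-counting pigeonhole appears in your Case~2, used there to reach a contradiction rather than to locate a long edge. The separation in your Case~1 follows directly from the definition of a connected component. So you need no external result on spanning trees, and your argument is fully self-contained.

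The paper's version, in exchange, reads the partition off a single tree edge and phrases the lemma in standard graph-algorithmic language. For the application in Theorem~\ref{thm:all-degrees} only the bound $1/(s-1)$ matters. Both versions give exactly that, and both use only the triangle inequality, so they hold in any metric space.
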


\begin{proof}
Consider the complete graph on \(\Lambda\), with edge weight
\(|\lambda-\mu|\) on the edge joining \(\lambda\) and \(\mu\), and choose a
minimum spanning tree.  We use the standard cut optimality condition: each
edge of a minimum spanning tree has minimum weight among the edges crossing
the cut obtained by deleting it; see \cite[Theorem~6.3(c)]{KorteVygen}.
Choose two points of \(\Lambda\) at distance one.  Their path in the tree has
at most \(s-1\) edges, so it contains an edge \(e\) of length at least
\(1/(s-1)\).  Deleting \(e\) gives two non-empty vertex sets
\(\Lambda_0\) and \(\Lambda_1\).  By the cut optimality condition, every
edge joining \(\Lambda_0\) to \(\Lambda_1\) has length at least that of
\(e\).  Hence
\(
\operatorname{dist}(\Lambda_0,\Lambda_1)\geqslant1/(s-1)
\), as required.
\end{proof}

\begin{lemma}\label{lem:uniform-spectral-splitting}
For \(m\geqslant2\) and \(d,R>0\), put
\[
\begin{split}
       \beta_m(d)
       &=(2^m\sqrt m)^{m-1}
         \min\{1,d\}^{-m^2/4},\\
       S_m(R)
       &=\sum_{k=0}^{m-1}R^k,\\
       H_m(d,R)
       &=2\beta_m(d)S_m(R)(1+R)^m.
\end{split}
\]
Suppose that \(T\in\calB(X)\), \(\|T\|\leqslant R\), and
\[
       q(T)=0,
       \qquad
       q=q_0q_1,
\]
where \(q_0,q_1\) are non-constant monic polynomials,
\(\deg q\leqslant m\), all their roots belong to \(\overline\D\), and the
two root sets have distance at least \(d\).  Then there is a projection
\(P\in\calB(X)\), commuting with \(T\), such that
\[
       q_0(T)P=0,
       \qquad
       q_1(T)(I-P)=0,
       \qquad
       \|P\|+\|I-P\|\leqslant H_m(d,R).
\]
\end{lemma}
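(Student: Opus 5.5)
The projection will be the Bézout idempotent.  Since \(q_0\) and \(q_1\) have no common root, there are polynomials \(u_0,u_1\) with \(\deg u_0<\deg q_1\) and \(\deg u_1<\deg q_0\) such that \(u_0q_0+u_1q_1=1\).  We put
\[
       P=u_1(T)q_1(T),\qquad I-P=u_0(T)q_0(T).
\]
Then \(P\) commutes with \(T\).  From \(q(T)=0\) we get \(q_0(T)P=u_1(T)q(T)=0\), and similarly \(q_1(T)(I-P)=0\).  Also \(P(I-P)=u_0u_1(T)q(T)=0\), so \(P\) is idempotent.  The algebraic part is therefore routine; all of the work lies in bounding \(\|u_1q_1(T)\|\) and \(\|u_0q_0(T)\|\).

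To bound the Bézout coefficients, we write \(n_0=\deg q_0\) and \(n_1=\deg q_1\), so \(n_0+n_1\leqslant m\).  The polynomial \(u_1\) of degree less than \(n_0\) is determined by \(u_1q_1=1\) modulo \(q_0\), that is, by Hermite interpolation of \(1/q_1\) at the roots of \(q_0\) counted with multiplicity.  In Newton form \eqref{eq:newton-hermite-form} with the roots \(\mu_1,\ldots,\mu_{n_0}\) of \(q_0\) as nodes, we have \(u_1=\sum_{j<n_0}(1/q_1)[\mu_1,\ldots,\mu_{j+1}]N_j\).  We then bound the divided differences by Cauchy's formula \eqref{eq:cauchy-divided-difference}.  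For this we use a contour surrounding the roots of \(q_0\) and staying at distance \(\gtrsim d\) from the roots of \(q_1\), for instance the boundary of the union of the discs of radius \(d/2\) about the roots of \(q_0\).  On this contour \(|q_1|\geqslant(d/2)^{n_1}\), and every node lies at distance \(d/2\) from it.  The contour length is at most \(n_0\pi d\), so
\[
       |(1/q_1)[\mu_1,\ldots,\mu_{j+1}]|\lesssim n_0(d/2)^{-(n_1+j)}.
\]
Alternatively, one could use a resultant/Sylvester-matrix bound together with Cramer's rule and Hadamard's inequality; that is where a factor like \((2^m\sqrt m)^{m-1}\) comes from.  I would aim for a coefficient bound \(\|u_1\|_{\ell_1}\leqslant\beta_m(d)\), with the same bound for \(u_0\), after expanding \(N_j\) in monomials with coefficients at most \(2^{j}\).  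If the paper's exact exponent \(m^2/4\) is required, I would instead use the Sylvester matrix \(\mathrm{Syl}(q_0,q_1)\).  Its columns have Euclidean norm at most \(2^m\) because the coefficients come from roots in \(\overline\D\), so Hadamard gives at most \((2^m\sqrt m)^{m-1}\) for the cofactors.  The resultant satisfies \(|\mathrm{Res}|=\prod|\mu_i-\nu_k|\geqslant\min\{1,d\}^{n_0n_1}\geqslant\min\{1,d\}^{m^2/4}\).  Cramer's rule then bounds each coefficient of \(u_0,u_1\) by \(\beta_m(d)\).

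Finally we estimate the operator norms.  We have \(\|u_1(T)\|\leqslant\beta_m(d)S_m(R)\) because \(\deg u_1<m\) and \(\|T^k\|\leqslant R^k\).  We also have \(\|q_1(T)\|\leqslant\prod(\|T\|+1)\leqslant(1+R)^m\), since each root has modulus at most one.  Hence \(\|P\|\leqslant\beta_m(d)S_m(R)(1+R)^m\), the same bound holds for \(\|I-P\|\), and adding the two gives \(H_m(d,R)\).  The main obstacle is the second step, controlling the Bézout coefficients uniformly in terms of the separation \(d\) alone.  The Sylvester/Hadamard/resultant route is the one I would carry out, since it matches the shape of \(\beta_m\).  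The delicate point there is checking that the Cramer numerators (cofactors) really are bounded by the Hadamard product over \(m-1\) columns, each of norm at most \(2^m\sqrt m\) after accounting for the shifted-coefficient structure.
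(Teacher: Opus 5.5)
Your proposal is correct and follows the paper's proof essentially step for step. Both use the B\'ezout idempotent $P=v(T)q_1(T)$. Both bound the coefficients of $u,v$ by Cramer's rule on the Sylvester system, with Hadamard's inequality for the cofactors and the resultant lower bound $\prod|\alpha_i-\beta_j|\geqslant\min\{1,d\}^{m^2/4}$. Both then finish with $\|u(T)\|,\|v(T)\|\leqslant\beta_m(d)S_m(R)$ and $\|q_j(T)\|\leqslant(1+R)^m$.

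The point you flag as delicate is settled exactly as in the paper. The right-hand side of the Sylvester system is a coordinate vector, so each Cramer numerator is a single cofactor of size $N-1\leqslant m-1$ with entries bounded by $2^m$, and Hadamard gives $(2^m\sqrt m)^{m-1}$. Your sharper column bound $2^m$, which follows from $\sum_k\binom{n}{k}^2=\binom{2n}{n}\leqslant4^n$, also works.
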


\begin{proof}
Write \(r=\deg q_0\), \(s=\deg q_1\), and \(N=r+s\leqslant m\).
Since the two polynomials are coprime, there are unique polynomials \(u,v\),
with \(\deg u<s\) and \(\deg v<r\), such that
\[
       uq_0+vq_1=1.
\]
Their coefficient vectors solve the Sylvester linear system associated with
\(q_0\) and \(q_1\).  After ordering coefficients by increasing powers, the
right-hand side of this system is a coordinate vector; hence each numerator
in Cramer's rule is a single cofactor of the Sylvester matrix.  Write
\[
       q_j(z)=z^{m_j}+a_{j,m_j-1}z^{m_j-1}+\cdots+a_{j,0}
       \qquad(j=0,1).
\]
Because all roots lie in \(\overline\D\), the elementary-symmetric-function
formula gives
\[
       |a_{j,k}|\leqslant\binom{m_j}{k}\leqslant2^m.
\]
Thus every entry of the \(N\times N\) Sylvester matrix
\(S(q_0,q_1)\) has modulus at most \(2^m\).  By Hadamard's inequality,
every cofactor has modulus at most
\[
       (2^m\sqrt{N-1})^{N-1}
       \leqslant(2^m\sqrt m)^{m-1}.
\]
Let \(\alpha_1,\ldots,\alpha_r\) and
\(\beta_1,\ldots,\beta_s\) be the roots of \(q_0\) and \(q_1\),
respectively, listed with multiplicity.  Then
\[
\begin{split}
       |\det S(q_0,q_1)|
       &=|\operatorname{Res}(q_0,q_1)|\\
       &=\prod_{i=1}^r\prod_{j=1}^s
         |\alpha_i-\beta_j|
       \geqslant d^{rs}.
\end{split}
\]  Since
\(rs\leqslant m^2/4\), this determinant is at least
\(\min\{1,d\}^{m^2/4}\).  Cramer's rule therefore shows that every
coefficient of \(u\) and \(v\) has modulus at most \(\beta_m(d)\).

Put
\[
       P=v(T)q_1(T),
       \qquad
       I-P=u(T)q_0(T).
\]
If \(e=vq_1\), then the B\'ezout identity gives
\[
       e\equiv1\pmod{q_0},
       \qquad
       e\equiv0\pmod{q_1}.
\]
Hence \(e^2-e\) is divisible by both coprime factors and therefore by
\(q_0q_1=q\); thus \(P^2=P\).  Moreover,
\[
       q_0(T)P=v(T)q(T)=0,
       \qquad
       q_1(T)(I-P)=u(T)q(T)=0.
\]
The coefficient estimate gives
\[
       \|u(T)\|,\ \|v(T)\|
       \leqslant\beta_m(d)S_m(R),
\]
while the root condition implies
\[
       \|q_0(T)\|,\ \|q_1(T)\|
       \leqslant(1+R)^m.
\]
Consequently,
\[
       \|P\|+\|I-P\|
       \leqslant2\beta_m(d)S_m(R)(1+R)^m
       =H_m(d,R).
\]
\end{proof}

\begin{theorem}\label{thm:all-degrees}
Set
\[
       G_1=1,
       \qquad
       G_2=\sqrt{1+(2\mathrm e-1)^2},
\]
and, for \(n\geqslant3\), define recursively
\begin{equation}\label{eq:recursive-upper-bound}
       G_n=\max\left\{
       G_{n-1},\ N_n,\ E_n,\
       H_n\!\left(\frac1{n-1},R_n\right)G_{n-1}
       \right\},
\end{equation}
where \(N_n\), \(E_n\), \(R_n\) and \(H_n\) are the explicit constants
from Proposition~\ref{prop:nilpotents}, Lemma~\ref{lem:general-large-calculus}
and Lemma~\ref{lem:uniform-spectral-splitting}.  Then
\[
       \Gamma_1=1,
       \qquad
       2n-1\leqslant\Gamma_n\leqslant G_n<\infty
       \quad(n\geqslant2).
\]
\end{theorem}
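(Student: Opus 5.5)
The plan is an induction on \(n\), proving \(\Psi(a,\calA)\leqslant G_n\) for every element of degree at most \(n\). The lower bounds are Proposition~\ref{prop:gamma-n-lower}. For \(\Gamma_1=1\), a degree-one element is scalar, \(a=\lambda1\). Then \(V(a)=\{\lambda\}\) and \(\|p(a)\|=|p(\lambda)|\), so the constant is exactly one. The case \(n=2\) is Theorem~\ref{thm:degree-two}. Finiteness of \(G_n\) is clear from the recursion, since every constant appearing in \eqref{eq:recursive-upper-bound} is explicit and finite.

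For the inductive step with \(n\geqslant3\), Lemma~\ref{lem:left-regular} lets us assume \(T\in\calB(X)\). Suppose the minimal polynomial \(q\) has degree \(m\leqslant n\). If \(m<n\), the induction hypothesis gives the bound \(G_{n-1}\leqslant G_n\).

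So assume \(m=n\), and let \(\Lambda\) be the set of distinct roots. If \(|\Lambda|=1\), an affine change gives \(T^n=0\), and Proposition~\ref{prop:nilpotents} gives the bound \(N_n\). If \(|\Lambda|\geqslant2\), translate and dilate so that \(\Lambda\) has diameter one and lies in \(\overline\D\), for instance by translating a root of \(\Lambda\) to the origin. Since \(\Psi\) is affine invariant, the minimal polynomial stays monic of degree \(n\) with roots in \(\overline\D\).

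Two regimes remain. If \(\|T\|>R_n\), Lemma~\ref{lem:general-large-calculus} gives the bound \(E_n\). If \(\|T\|\leqslant R_n\), apply Lemma~\ref{lem:root-partition} to \(\Lambda\). Since \(s=|\Lambda|\leqslant n\), it yields clusters \(\Lambda_0,\Lambda_1\) at distance at least \(1/(s-1)\geqslant1/(n-1)\). Let \(q_0\) and \(q_1\) be the factors of \(q\) carrying the roots in \(\Lambda_0\) and \(\Lambda_1\), with multiplicities. Lemma~\ref{lem:uniform-spectral-splitting}, with \(d=1/(n-1)\) and \(R=R_n\), then gives a projection \(P\) commuting with \(T\), with \(q_0(T)P=0\), \(q_1(T)(I-P)=0\), and \(\|P\|+\|I-P\|\leqslant H_n(1/(n-1),R_n)\).

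Now put \(Y_0=PX\) and \(Y_1=(I-P)X\). These are closed \(T\)-invariant subspaces, each non-zero, since otherwise \(q_0(T)=0\) or \(q_1(T)=0\), contradicting minimality of \(q\). The restriction \(T|_{Y_0}\) is annihilated by \(q_0\), which has degree less than \(n\), and similarly for \(T|_{Y_1}\). By induction,
\[
\|p(T|_{Y_i})\|\leqslant G_{n-1}\sup_{V(T|_{Y_i},\calB(Y_i))}|p|.
\]
Lemma~\ref{lem:invariant-restriction} bounds the right-hand side by \(G_{n-1}\sup_{V(T)}|p|\). Finally, decompose
\[
p(T)=p(T|_{Y_0})P+p(T|_{Y_1})(I-P),
\]
so that \(\|p(T)\|\leqslant(\|P\|+\|I-P\|)\,G_{n-1}\sup_{V(T)}|p|\). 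This is bounded by the last term in \eqref{eq:recursive-upper-bound}.

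The main obstacle is making the bookkeeping match the constants. One point is that the bounded and large norm regimes use the threshold \(R_n\) computed for degree \(m=n\), which is exactly the case we are in. The other is that the Bézout splitting needs \(\deg q\leqslant n\) with roots in \(\overline\D\). The normalization to diameter one is what secures this, since it keeps all roots within \(\overline\D\) after translating one root to \(0\). One must check that this normalization commutes with all earlier lemmas, which it does by affine invariance of \(V\) and \(\Psi\).
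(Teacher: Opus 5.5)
Your proposal is correct and follows the paper's proof essentially step for step. The induction has the same four branches: smaller degree, the nilpotent case via Proposition~\ref{prop:nilpotents}, the large-norm case via Lemma~\ref{lem:general-large-calculus}, and the bounded-norm case via Lemma~\ref{lem:root-partition} and the B\'ezout projection of Lemma~\ref{lem:uniform-spectral-splitting}. The only cosmetic differences are your normalisation (one root at the origin, diameter one, rather than a diametral pair sent to $0$ and $1$) and your argument that both $Y_0$ and $Y_1$ are non-zero by minimality of $q$, where the paper simply omits a vanishing summand.
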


\begin{proof}
Every element of algebraic degree one is scalar, so \(\Gamma_1=1\).
Theorem~\ref{thm:degree-two} gives \(\Gamma_2\leqslant G_2\).  Assume
inductively that \(\Gamma_{n-1}\leqslant G_{n-1}\), where \(n\geqslant3\),
and let \(a\) have algebraic degree at most \(n\).  Elements of smaller
degree are already bounded by \(G_{n-1}\), so suppose that the minimal
polynomial has degree exactly \(n\).  By Lemma~\ref{lem:left-regular}, we may
pass to the left multiplication operator and work with
\(T\in\calB(X)\) without changing its norm, minimal polynomial, numerical
range or spectral constant.

If the minimal polynomial has only one distinct root, an affine translation
reduces the relation to \(T^n=0\), and Proposition~\ref{prop:nilpotents}
gives the bound \(N_n\).  Suppose that there are at least two distinct roots.
Choose two roots at maximal distance and apply an affine transformation which
sends them to \(0\) and \(1\).  Every root of the transformed minimal
polynomial then belongs to \(\overline\D\), and \(\Psi\) is unchanged.  Put
\(M=\|T\|\).

If \(M>R_n\), Lemma~\ref{lem:general-large-calculus} gives the bound \(E_n\).
Assume therefore that \(M\leqslant R_n\).  Let \(\Lambda\) be the set of
distinct roots, say \(|\Lambda|=s\).  By Lemma~\ref{lem:root-partition}, write
\(\Lambda=\Lambda_0\mathbin{\dot\cup}\Lambda_1\) with
\[
       \operatorname{dist}(\Lambda_0,\Lambda_1)
       \geqslant\frac1{s-1}\geqslant\frac1{n-1}.
\]
Factor the minimal polynomial as \(q=q_0q_1\), assigning to \(q_j\) all
multiplicities of the roots in \(\Lambda_j\).  Lemma~\ref{lem:uniform-spectral-splitting}
gives a projection \(P\) commuting with \(T\), such that
\[
       \|P\|+\|I-P\|
       \leqslant H_n\!\left(\frac1{n-1},R_n\right),
\]
and such that the restrictions of \(T\) to
\[
       Y_0=PX,
       \qquad
       Y_1=(I-P)X
\]
have algebraic degrees at most \(\deg q_0\) and \(\deg q_1\), respectively.
Both degrees are at most \(n-1\).

Let \(K=\sup_{z\in V(T)}|p(z)|>0\).  For each non-zero \(Y_j\),
Lemma~\ref{lem:invariant-restriction} shows that the numerical range of the
restriction is contained in \(V(T)\), and the induction hypothesis gives
\[
       \|p(T)|_{Y_j}\|\leqslant G_{n-1}K.
\]
If one of the spaces \(Y_j\) is zero, the corresponding term below is simply
omitted.  Consequently,
\[
\begin{split}
       \|p(T)\|
       &\leqslant
       \|p(T)|_{Y_0}\|\,\|P\|
       +\|p(T)|_{Y_1}\|\,\|I-P\|\\
       &\leqslant
       H_n\!\left(\frac1{n-1},R_n\right)G_{n-1}K.
\end{split}
\]
The four alternatives are exactly the four terms in
\eqref{eq:recursive-upper-bound}; hence \(\Gamma_n\leqslant G_n\).
The lower bound is Proposition~\ref{prop:gamma-n-lower}.
\end{proof}

\begin{corollary}\label{cor:matrix-algebras}
Let \(d\geqslant1\), and equip \(M_d(\C)\) with any unital submultiplicative norm for which \(\|I\|=1\).  Then
\[
       \Psi_{M_d(\C)}\leqslant\Gamma_d<\infty.
\]
The same conclusion holds for every unital subalgebra of \(M_d(\C)\),
endowed with the inherited norm.
\end{corollary}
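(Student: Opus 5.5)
The plan is to reduce the corollary to Theorem~\ref{thm:all-degrees} via the Cayley--Hamilton theorem.

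First, let \(\calA=M_d(\C)\), equipped with a unital submultiplicative norm with \(\|I\|=1\). This is a finite-dimensional, hence complete, unital Banach algebra whose unit has norm one, so it falls under the definition of \(\Gamma_d\). For any \(a\in M_d(\C)\), the characteristic polynomial of \(a\) has degree \(d\) and annihilates \(a\) by Cayley--Hamilton. Therefore the minimal polynomial divides it and has degree at most \(d\). By the definition of \(\Gamma_d\) as a supremum over all unital Banach algebras and all elements of algebraic degree at most \(d\), this gives \(\Psi(a,\calA)\leqslant\Gamma_d\). Taking the supremum over \(a\) yields \(\Psi_{M_d(\C)}\leqslant\Gamma_d\), and Theorem~\ref{thm:all-degrees} supplies \(\Gamma_d\leqslant G_d<\infty\). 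The case \(d=1\) is trivial, since \(\Gamma_1=1\).

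For a unital subalgebra \(\calB\subseteq M_d(\C)\), we need to fix what "unital" means. If \(\calB\) contains the identity \(I\) of \(M_d(\C)\), the inherited norm is a unital submultiplicative norm on a finite-dimensional (closed) subalgebra. Every \(b\in\calB\) still satisfies its characteristic polynomial, computed in \(M_d(\C)\). Hence its minimal polynomial, which is the same whether computed in \(\calB\) or in \(M_d(\C)\), has degree at most \(d\), and the same argument gives \(\Psi(b,\calB)\leqslant\Gamma_d\). Alternatively, subalgebra invariance \(V(b,\calB)=V(b,M_d(\C))\) shows that \(\Psi(b,\calB)=\Psi(b,M_d(\C))\) directly.

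The only point needing care is a subalgebra whose unit \(e\) is an idempotent different from \(I\). Then \(\|e\|=1\) has to be assumed, in line with the paper's standing convention. Moreover, an element \(b\in e M_d(\C) e\) need not satisfy a degree-\(d\) polynomial with constant term in the ambient sense, so the polynomial has to be taken relative to the unit \(e\). Here I would argue as follows: \(b\) acts on the range \(eX\), of dimension \(k\leqslant d\), and \(b\) vanishes on \(\ker e\). So \(\chi(b)\) computed with \(e\) as the unit, where \(\chi\) is the characteristic polynomial of \(b|_{eX}\), is zero, giving degree at most \(k\leqslant d\). Since \(\Gamma_n\) is nondecreasing in \(n\) by its definition, the bound \(\Gamma_d\) still holds. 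I expect this last bookkeeping, namely interpreting "unital subalgebra" and checking that the degree bound survives, to be the only delicate step. The rest is immediate from the definition of \(\Gamma_d\).
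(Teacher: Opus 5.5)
Your proof is correct and is essentially the paper's own argument: by Cayley--Hamilton every element of \(M_d(\C)\), and of any unital subalgebra, has minimal polynomial of degree at most \(d\). The definition of \(\Gamma_d\) then gives \(\Psi(a,\calA)\leqslant\Gamma_d\), and Theorem~\ref{thm:all-degrees} gives finiteness. Your extra treatment of subalgebras whose unit is an idempotent \(e\neq I\) is sound, given the assumption \(\|e\|=1\), but it is not needed, since the paper's unital subalgebras share the unit \(I\).
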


\begin{proof}
Every element of \(M_d(\C)\), and hence of any of its subalgebras, has minimal-polynomial degree at most \(d\).
\end{proof}

Theorem~\ref{thm:all-degrees} and Corollary~\ref{cor:matrix-algebras} therefore answer both parts of Question~1 in \cite[Section~8]{BHDVW}: the fixed-degree constants and the constants of finite matrix algebras are finite.

\begin{remark}\label{rem:quantitative-gap}
The proof leaves a substantial quantitative gap.  Proposition~\ref{prop:gamma-n-lower}
gives the linear lower estimate
\[
       2n-1\leqslant\Gamma_n,
\]
whereas the constants produced by the spectral-splitting induction grow much
faster.  It is therefore natural to ask whether
\[
       \Gamma_2=3
       \qquad\text{and, more generally,}\qquad
       \Gamma_n=2n-1\quad(n\geqslant2).
\]
The present arguments do not decide even the first equality.  More modestly,
it would already be interesting to determine the correct order of growth of
\(\Gamma_n\).
\end{remark}

\section{A gap phenomenon for \texorpdfstring{\(C^*\)}{C-star}-algebras}\label{sec:cstar-gap}

For a bounded operator \(T\) on a complex Hilbert space \(H\), write
\[
       W(T)=\{\langle T\xi,\xi\rangle_H:\xi\in H,\ \|\xi\|=1\}
\]
for its spatial numerical range.  We denote by \(C_{\mathrm{Cr}}\) the optimal
universal constant in
\[
       \|p(T)\|\leqslant C_{\mathrm{Cr}}
       \sup_{z\in W(T)}|p(z)|
       \qquad(T\in\calB(H),\ p\in\C[z]),
\]
where, as usual, polynomials with zero denominator are omitted.  Crouzeix and
Palencia proved
\begin{equation}\label{eq:crouzeix-palencia}
       C_{\mathrm{Cr}}\leqslant1+\sqrt2;
\end{equation}
see \cite[Theorem~3.1 and formula~(3)]{CrouzeixPalencia}.

\begin{theorem}\label{thm:cstar-gap}
Let \(\calA\) be a unital \(C^*\)-algebra.  Then
\[
       \Psi_{\calA}=1
       \quad\Longleftrightarrow\quad
       \calA\text{ is commutative}.
\]
If \(\calA\) is non-commutative, then
\[
       2\leqslant\Psi_{\calA}\leqslant C_{\mathrm{Cr}}
       \leqslant1+\sqrt2.
\]
Consequently, no value strictly between \(1\) and \(2\) occurs as the
constant of a unital \(C^*\)-algebra.
\end{theorem}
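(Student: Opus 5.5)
We plan to prove three things: the upper bound \(\Psi_{\calA}\leqslant C_{\mathrm{Cr}}\) for every unital \(C^*\)-algebra, the equality \(\Psi_{\calA}=1\) in the commutative case, and the lower bound \(\Psi_{\calA}\geqslant2\) in the non-commutative case. The stated equivalence and the gap assertion then follow at once.

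\emph{Upper bound.} By the Gelfand--Naimark theorem, represent \(\calA\) faithfully and unitally as a \(C^*\)-subalgebra of \(\calB(H)\). Subalgebra invariance gives \(V(a,\calA)=V(a,\calB(H))\). We will then use the classical fact (Bonsall--Duncan) that for Hilbert space operators the algebraic numerical range equals the closure of the spatial one, \(V(a,\calB(H))=\overline{W(a)}\). Since \(p(a)\) has the same norm in \(\calA\) as in \(\calB(H)\), the definition of \(C_{\mathrm{Cr}}\) together with continuity of \(p\) gives \(\Psi(a,\calA)\leqslant C_{\mathrm{Cr}}\). The Crouzeix--Palencia estimate \eqref{eq:crouzeix-palencia} then yields \(1+\sqrt2\).

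\emph{Commutative case.} Write \(\calA=C(\Omega)\). States are probability measures, so \(V(a)=\conv a(\Omega)\). Hence
\[
\|p(a)\|=\sup_\Omega|p\circ a|\leqslant\sup_{V(a)}|p|,
\]
so \(\Psi_{\calA}\leqslant1\). Testing \(p=1\) gives equality.

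\emph{Non-commutative case.} This is the main step. First we produce a non-zero \(x\in\calA\) with \(x^2=0\). A \(C^*\)-algebra in which every normal element\ldots{} is not the right route; instead we use the standard fact that a \(C^*\)-algebra with no non-zero nilpotents is commutative. If this citation is unwanted, we argue directly: non-commutativity yields an irreducible representation \(\pi\) of dimension at least two. By Kadison transitivity, pick self-adjoint \(h\in\calA\) whose spectrum contains two points \(s<t\). Take continuous functions \(f,g\geqslant0\) with disjoint supports near \(s\) and \(t\), and choose \(y\) so that \(x=f(h)\,y\,g(h)\neq0\) (nonzero via transitivity on \(\pi\)). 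Then \(x^2=f(h)y\,g(h)f(h)\,y\,g(h)=0\).

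Normalise so that \(\|x\|=1\). In \(\calB(H)\) we know \(W\) of a norm-one square-zero operator lies in \(\tfrac12\overline\D\), and in fact \(\overline{W(x)}=\tfrac12\overline\D\) (this is Haagerup--de la Harpe: the numerical radius of a nilpotent of order two is \(\|x\|/2\), and the range is a disc). So \(V(x,\calA)=\tfrac12\overline\D\). Taking \(p(z)=z\) gives
\[
\|p(x)\|=1=2\sup_{V(x)}|p|,
\]
hence \(\Psi_{\calA}\geqslant2\).

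The main obstacle is this middle step, namely existence of the square-zero element. We expect to cite the known theorem that a \(C^*\)-algebra without non-zero nilpotents is commutative, falling back on the transitivity construction above.
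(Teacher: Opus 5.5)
Your argument is correct and has the same structure as the paper's: functional calculus in the commutative case, a non-zero square-zero element tested on $p(z)=z$ for the lower bound $2$, and a faithful representation with $V(a,\calA)=\overline{W(\pi(a))}$ plus Crouzeix--Palencia for the upper bound. The differences are only in how two sub-steps are justified.
\begin{itemize}
\item[(1)] You get the square-zero element from the classical fact that nilpotent-free $C^*$-algebras are commutative, or from your transitivity construction; the paper instead cites the Banach-algebraic result of Alaminos et al. In your construction the two spectral points must lie in $\sigma(\pi(h))$, not just $\sigma(h)$, to guarantee $f(h)\calA g(h)\neq0$.
\item[(2)] You quote Haagerup--de la Harpe for $w(x)\leqslant\|x\|/2$, whereas the paper computes it directly from $4\operatorname{Re}(\mathrm e^{\mathrm i\theta}x)^2=xx^*+x^*x$ and the orthogonality of $xx^*$ and $x^*x$.
\end{itemize}
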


\begin{proof}
Suppose first that \(\calA\) is commutative.  Every \(a\in\calA\) is normal,
so the continuous functional calculus gives
\[
       \|p(a)\|=\max_{\lambda\in\sigma(a)}|p(\lambda)|
       \leqslant\sup_{z\in V(a,\calA)}|p(z)|.
\]
The constant polynomial \(1\) gives the reverse inequality, and hence
\(\Psi_{\calA}=1\).

Assume now that \(\calA\) is non-commutative.  The discussion preceding
\cite[Theorem~2.1]{AlaminosSquareZero} records that \(C^*\)-algebras are
essential and have property~B, while
\cite[Proposition~6.1]{AlaminosSquareZero} shows that every non-commutative
essential Banach algebra with property~B contains a non-zero square-zero
element.  Hence every non-commutative \(C^*\)-algebra contains such an
element.
Choose such an \(x\in\calA\), so that \(x^2=0\).  Choose a
faithful unital \(*\)-representation of \(\calA\) and identify \(\calA\) with
its image in \(\calB(H)\).  Subalgebra invariance gives
\[
       V(x,\calA)=V(x,\calB(H))=\overline{W(x)}.
\]
Let
\[
       w(x)=\sup_{z\in W(x)}|z|
\]
be the Hilbert-space numerical radius.  From the definition,
\[
\begin{split}
       w(x)
       &=\sup_{\|\xi\|=1}\sup_{\theta\in\mathbb R}
          \operatorname{Re}\langle
             \mathrm e^{\mathrm i\theta}x\xi,\xi\rangle_H\\
       &=\sup_{\theta\in\mathbb R}
          \bigl\|\operatorname{Re}(\mathrm e^{\mathrm i\theta}x)\bigr\|.
\end{split}
\]
For the last equality, one uses the Rayleigh-quotient formula for a
self-adjoint operator and replaces \(\theta\) by \(\theta+\pi\) when the
negative spectral endpoint has larger modulus.  Since \(x^2=(x^*)^2=0\),
\[
\begin{split}
       4\operatorname{Re}(\mathrm e^{\mathrm i\theta}x)^2
       &=(\mathrm e^{\mathrm i\theta}x+
          \mathrm e^{-\mathrm i\theta}x^*)^2\\
       &=xx^*+x^*x.
\end{split}
\]
The positive elements \(xx^*\) and \(x^*x\) are orthogonal, since
\[
       (xx^*)(x^*x)=x(x^*)^2x=0,
       \qquad
       (x^*x)(xx^*)=x^*x^2x^*=0.
\]
Therefore
\[
       \|xx^*+x^*x\|
       =\max\{\|xx^*\|,\|x^*x\|\}
       =\|x\|^2,
\]
and hence
\[
       w(x)=\frac{\|x\|}{2}.
\]
Testing the definition of \(\Psi(x,\calA)\) on \(p(z)=z\) gives
\[
       \Psi_{\calA}\geqslant\Psi(x,\calA)
       \geqslant\frac{\|x\|}{w(x)}=2.
\]

For the upper bound, let \(a\in\calA\), choose a faithful unital
\(*\)-representation \(\pi:\calA\to\calB(H)\), and let \(p\in\C[z]\).  By
subalgebra invariance,
\[
       V(a,\calA)=V(\pi(a),\calB(H))=\overline{W(\pi(a))}.
\]
The definition of \(C_{\mathrm{Cr}}\) therefore yields
\[
\begin{split}
       \|p(a)\|
       &=\|p(\pi(a))\|\\
       &\leqslant C_{\mathrm{Cr}}
          \sup_{z\in W(\pi(a))}|p(z)|\\
       &=C_{\mathrm{Cr}}
          \sup_{z\in V(a,\calA)}|p(z)|.
\end{split}
\]
Thus \(\Psi_{\calA}\leqslant C_{\mathrm{Cr}}\), and
\eqref{eq:crouzeix-palencia} completes the proof.
\end{proof}

The Jiang--Su algebra is included here for a structural reason.  It was
introduced in \cite{JiangSu}, and its relatively prime dimension-drop
subalgebras embed unitally into it; see
\cite[Theorem~2.2 and Proposition~3.3]{RordamWinter}.  Since the constant of
a dimension-drop algebra turns out to be the corresponding matrix Crouzeix
constant, these embeddings allow one canonical simple \(C^*\)-algebra to
detect matrix sizes tending to infinity.  The next proposition therefore
identifies \(\Psi_{\mathcal Z}\) exactly with the universal Crouzeix constant.

To state the result conveniently, let
\[
       C_d=
       \sup_{A\in M_d(\C)}
       \sup_{\substack{p\in\C[z]\\
               \sup_{z\in W(A)}|p(z)|>0}}
       \frac{\|p(A)\|}{\sup_{z\in W(A)}|p(z)|}
       \qquad(d\geqslant1).
\]

\begin{proposition}\label{prop:jiang-su}
Let \(\mathcal Z\) denote the Jiang--Su algebra introduced in
\cite{JiangSu}.  Then
\[
       \Psi_{\mathcal Z}=C_{\mathrm{Cr}}.
\]
More precisely, if \(r,s\geqslant2\) and
\[
 Z_{r,s}=\bigl\{f\in C([0,1],M_r\otimes M_s):
 f(0)\in M_r\otimes1_s,\quad f(1)\in1_r\otimes M_s\bigr\}
\]
is the dimension-drop algebra, then
\[
       \Psi_{Z_{r,s}}=C_{rs}.
\]
\end{proposition}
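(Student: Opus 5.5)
The plan is to prove $\Psi_{Z_{r,s}}=C_{rs}$ by two inequalities, and then deduce $\Psi_{\mathcal Z}=C_{\mathrm{Cr}}$ from the unital embeddings of dimension-drop algebras into $\mathcal Z$.

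\emph{Upper bound $\Psi_{Z_{r,s}}\leqslant C_{rs}$.} Put $d=rs$ and view $Z_{r,s}$ as a unital $C^*$-subalgebra of $C([0,1],M_d)$. Choose a countable dense set $\{t_k\}\subseteq[0,1]$ and let $\pi=\bigoplus_k\mathrm{ev}_{t_k}$. This is a faithful unital $*$-representation on $\bigoplus_k\C^d$. Subalgebra invariance and the identification used in the proof of Theorem~\ref{thm:cstar-gap} give
\[
V(f,Z_{r,s})=\overline{W(\pi(f))}=\overline{\conv}\bigcup_{t\in[0,1]}W(f(t)).
\]
For the last equality, the numerical range of a direct sum is the convex hull of the union, and continuity in $t$ handles the closure. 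Then for every polynomial $p$,
\[
\|p(f)\|=\sup_t\|p(f(t))\|\leqslant C_d\sup_t\sup_{W(f(t))}|p|\leqslant C_d\sup_{V(f)}|p|.
\]

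\emph{Lower bound $\Psi_{Z_{r,s}}\geqslant C_{rs}$.} Fix $A\in M_d$ and pick $\lambda\in W(A)$. Let $\varphi:[0,1]\to[0,1]$ be continuous with $\varphi(0)=\varphi(1)=0$ and $\varphi\equiv1$ on $[1/3,2/3]$. Define
\[
f(t)=(1-\varphi(t))\lambda 1+\varphi(t)A.
\]
The endpoint values are scalars, and scalars lie in both $M_r\otimes1_s$ and $1_r\otimes M_s$, so $f\in Z_{r,s}$. For each $t$,
\[
W(f(t))=(1-\varphi(t))\lambda+\varphi(t)W(A)\subseteq W(A)
\]
by convexity. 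Hence $V(f)=\overline{W(A)}$, while $\|p(f)\|\geqslant\|p(A)\|$. Taking suprema over $p$ and $A$ gives $\Psi_{Z_{r,s}}\geqslant C_{rs}$.

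\emph{Passage to $\mathcal Z$.} Two facts about the matrix constants are needed.
\begin{itemize}
\item \emph{Monotonicity.} $C_d\leqslant C_{d'}$ for $d\leqslant d'$. To see this, replace $A$ by $A\oplus\lambda I_{d'-d}$ with $\lambda\in W(A)$; this leaves the numerical range unchanged and does not decrease $\|p(\cdot)\|$.
\item \emph{Limit.} $\sup_dC_d=C_{\mathrm{Cr}}$. The inequality $\leqslant$ is trivial. For $\geqslant$, take $T\in\calB(H)$ (separable reduction, since $p(T)$ only sees a separable reducing subspace). Let $P_n$ be finite-rank projections increasing strongly to $I$ and put $T_n=P_nTP_n|_{P_nH}$. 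Then $W(T_n)\subseteq W(T)$, and $p(T_n)P_n\to p(T)$ strongly. Lower semicontinuity of the norm gives $\|p(T)\|\leqslant\liminf\|p(T_n)\|\leqslant\sup_dC_d\,\sup_{W(T)}|p|$.
\end{itemize}
By \cite[Theorem~2.2 and Proposition~3.3]{RordamWinter}, $Z_{r,s}$ embeds unitally into $\mathcal Z$ when $r,s$ are coprime. Subalgebra invariance of $V$ together with isometry shows that $\Psi$ can only increase when passing to a larger unital algebra, so $\Psi_{\mathcal Z}\geqslant C_{rs}$. Choosing $r=k$, $s=k+1$ and letting $k\to\infty$ gives $\Psi_{\mathcal Z}\geqslant C_{\mathrm{Cr}}$. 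The reverse inequality is Theorem~\ref{thm:cstar-gap}.

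The main obstacle I expect is the identification of $V(f,Z_{r,s})$ as the closed convex hull of the pointwise ranges $W(f(t))$. Routing it through the faithful representation $\pi$ reduces it to the elementary direct-sum formula plus continuity, which avoids interchanging derivatives in \eqref{eq:support}. The strong-approximation step for $\sup_dC_d=C_{\mathrm{Cr}}$ also needs a careful sentence.
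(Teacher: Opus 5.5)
Your proof is correct and has the same architecture as the paper's: the fibrewise Crouzeix bound for $\Psi_{Z_{r,s}}\leqslant C_{rs}$, the bump element $\lambda+h(t)(A-\lambda)$ for the reverse inequality, monotonicity via $A\oplus\lambda I$, and the R{\o}rdam--Winter embeddings of $Z_{k,k+1}$ together with Theorem~\ref{thm:cstar-gap}. The only differences are local. You compute $V(f,Z_{r,s})$ through the faithful representation $\bigoplus_k\mathrm{ev}_{t_k}$, whereas the paper uses the pure-state description of $C([0,1],M_{rs})$. You also prove $\sup_dC_d=C_{\mathrm{Cr}}$ by strong approximation with compressions $P_nTP_n$, whereas the paper compresses to the Krylov space $\operatorname{span}\{\xi,T\xi,\ldots,T^m\xi\}$ with $m=\deg p$; this gives $p(A)\xi=p(T)\xi$ exactly and so needs neither a separable reduction nor a limiting argument.
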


\begin{proof}
We first record that
\begin{equation}\label{eq:crouzeix-finite-matrices}
       C_{\mathrm{Cr}}=\sup_{d\geqslant1}C_d.
\end{equation}
The sequence \((C_d)\) is non-decreasing.  Indeed, if \(A\in M_d(\C)\),
\(e>d\), and \(\lambda\in W(A)\), then
\[
       B=A\oplus\lambda I_{e-d}\in M_e(\C)
\]
satisfies \(W(B)=W(A)\) and \(\|p(B)\|\geqslant\|p(A)\|\) for every
polynomial \(p\).  Hence \(C_d\leqslant C_e\).

It remains to justify that finite matrices detect the universal constant.  Let
\(T\in\calB(H)\), let \(p\) have degree \(m\), and fix \(\varepsilon>0\).
Choose a unit vector \(\xi\in H\) such that
\[
       \|p(T)\xi\|>(1-\varepsilon)\|p(T)\|,
\]
and put
\[
       E=\operatorname{span}\{\xi,T\xi,\ldots,T^m\xi\},
       \qquad A=P_ET|_E,
\]
where \(P_E\) is the orthogonal projection onto \(E\).  For
\(0\leqslant k\leqslant m\), induction gives \(A^k\xi=T^k\xi\), and hence
\(p(A)\xi=p(T)\xi\).  Moreover, \(W(A)\subseteq W(T)\).  Therefore
\[
\begin{split}
       (1-\varepsilon)\|p(T)\|
       &<\|p(A)\|\\
       &\leqslant C_{m+1}\sup_{z\in W(A)}|p(z)|\\
       &\leqslant C_{m+1}\sup_{z\in W(T)}|p(z)|.
\end{split}
\]
Letting \(\varepsilon\downarrow0\) proves \eqref{eq:crouzeix-finite-matrices}.

We next identify the constant of \(Z_{r,s}\).  If \(f\in Z_{r,s}\), then for
every polynomial \(p\),
\[
\begin{split}
       \|p(f)\|
       &=\max_{0\leqslant t\leqslant1}\|p(f(t))\|\\
       &\leqslant C_{rs}
          \max_{0\leqslant t\leqslant1}
          \sup_{z\in W(f(t))}|p(z)|\\
       &\leqslant C_{rs}\sup_{z\in V(f,Z_{r,s})}|p(z)|.
\end{split}
\]
The last inequality holds because every fibre vector state is a state of
\(Z_{r,s}\).  Thus \(\Psi_{Z_{r,s}}\leqslant C_{rs}\).

For the reverse inequality, take \(A\in M_{rs}(\C)\), choose
\(\lambda\in W(A)\), and let \(h:[0,1]\to[0,1]\) be continuous with
\(h(0)=h(1)=0\) and \(h(t_0)=1\) at some \(t_0\in(0,1)\).  Define
\[
       f(t)=\lambda I_{rs}+h(t)(A-\lambda I_{rs}).
\]
Then \(f\in Z_{r,s}\), and convexity of \(W(A)\) gives
\[
       W(f(t))=\lambda+h(t)(W(A)-\lambda)\subseteq W(A)
       \qquad(0\leqslant t\leqslant1).
\]
On the other hand, \(f(t_0)=A\).  Since \(Z_{r,s}\) is a unital
\(C^*\)-subalgebra of \(C([0,1],M_{rs})\), subalgebra invariance and the
description of the pure states of \(C([0,1],M_{rs})\) as fibre vector states
give
\[
\begin{split}
       V(f,Z_{r,s})
       &=V(f,C([0,1],M_{rs}))\\
       &=\conv\bigcup_{0\leqslant t\leqslant1}W(f(t))
        =W(A).
\end{split}
\]
Also \(\|p(f)\|\geqslant\|p(f(t_0))\|=\|p(A)\|\).  Taking suprema over
\(A\) and \(p\) yields \(\Psi_{Z_{r,s}}\geqslant C_{rs}\), and hence
\(\Psi_{Z_{r,s}}=C_{rs}\).

For every \(n\geqslant2\), the relatively prime dimension-drop algebra
\(Z_{n,n+1}\) embeds unitally into \(\mathcal Z\); see
\cite[Proposition~3.3]{RordamWinter}.  Subalgebra invariance therefore gives
\[
       \Psi_{\mathcal Z}\geqslant\Psi_{Z_{n,n+1}}
       =C_{n(n+1)}.
\]
Since \((C_d)\) is non-decreasing and \(n(n+1)\to\infty\),
\eqref{eq:crouzeix-finite-matrices} implies
\(\Psi_{\mathcal Z}\geqslant C_{\mathrm{Cr}}\).  The reverse inequality is
Theorem~\ref{thm:cstar-gap}.
\end{proof}

\begin{remark}\label{rem:cstar-square-zero}
The square-zero element in the proof is explicit under several familiar
hypotheses.  If \(p\in\calA\) is a non-central projection, then at least one of
the corners \(p\calA(1-p)\) and \((1-p)\calA p\) is non-zero; every element
of either corner has square zero.  If \(v\in\calA\) is a proper isometry, so
\(v^*v=1\) and \(vv^*\neq1\), put \(q=1-vv^*\).  Then \(qv=0\) and
\[
       x=vq,
       \qquad
       x^*x=q\neq0,
       \qquad
       x^2=v(qv)q=0.
\]
Theorem~\ref{thm:cstar-gap} shows, however, that neither a non-central
projection nor a proper isometry is needed: non-commutativity itself is the
intrinsic condition.
\end{remark}

The operator form of Crouzeix's conjecture is the assertion
\(C_{\mathrm{Cr}}=2\).  The preceding theorem gives an equivalent
\(C^*\)-algebraic formulation.

\begin{proposition}\label{prop:cstar-crouzeix}
The following assertions are equivalent:
\begin{enumerate}
\item[(i)] \(C_{\mathrm{Cr}}=2\);
\item[(ii)] \(\Psi_{\calA}=2\) for every non-commutative unital
      \(C^*\)-algebra \(\calA\);
\item[(iii)] \(\Psi_{\mathcal Z}=2\) for the Jiang--Su algebra \(\mathcal Z\).
\end{enumerate}
\end{proposition}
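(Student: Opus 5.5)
The equivalences come straight out of Theorem~\ref{thm:cstar-gap} and Proposition~\ref{prop:jiang-su}. We prove (i)\(\Rightarrow\)(ii)\(\Rightarrow\)(iii)\(\Rightarrow\)(i).

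For (i)\(\Rightarrow\)(ii), let \(\calA\) be a non-commutative unital \(C^*\)-algebra. Theorem~\ref{thm:cstar-gap} gives the two-sided estimate \(2\leqslant\Psi_{\calA}\leqslant C_{\mathrm{Cr}}\). If \(C_{\mathrm{Cr}}=2\), the two bounds coincide, so \(\Psi_{\calA}=2\).

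For (ii)\(\Rightarrow\)(iii), we only need that the Jiang--Su algebra \(\mathcal Z\) is a non-commutative unital \(C^*\)-algebra. One way to see this: by \cite[Proposition~3.3]{RordamWinter}, \(\mathcal Z\) contains unitally a copy of the dimension-drop algebra \(Z_{2,3}\). That algebra is non-commutative, since at any interior point \(t\) its fibre is all of \(M_6\); for instance, take \(h(t)E_{12}\) and \(h(t)E_{21}\) with \(h\) vanishing at the endpoints. A shorter route is that Proposition~\ref{prop:jiang-su} gives \(\Psi_{\mathcal Z}=C_{\mathrm{Cr}}\geqslant2>1\), so Theorem~\ref{thm:cstar-gap} forces non-commutativity. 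Either way, (ii) applies and yields \(\Psi_{\mathcal Z}=2\).

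For (iii)\(\Rightarrow\)(i), Proposition~\ref{prop:jiang-su} identifies \(\Psi_{\mathcal Z}\) with \(C_{\mathrm{Cr}}\), so \(\Psi_{\mathcal Z}=2\) gives \(C_{\mathrm{Cr}}=2\) immediately.

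All the substance lies in the cited results, so there is no real obstacle. The only points to check are that \(\mathcal Z\) is non-commutative and that the lower bound \(2\leqslant\Psi_{\calA}\) in Theorem~\ref{thm:cstar-gap} applies to every non-commutative \(\calA\), not only to special classes; both are covered above.
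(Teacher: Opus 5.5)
Your proposal is correct and follows the same cycle (i)\(\Rightarrow\)(ii)\(\Rightarrow\)(iii)\(\Rightarrow\)(i) as the paper. It uses Theorem~\ref{thm:cstar-gap} for the first implication, non-commutativity of \(\mathcal Z\) for the second, and \(\Psi_{\mathcal Z}=C_{\mathrm{Cr}}\) from Proposition~\ref{prop:jiang-su} for the third. Your explicit justification that \(\mathcal Z\) is non-commutative, which the paper simply calls clear, is valid by either of the routes you give, and neither is circular.
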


\begin{proof}
The implication \textup{(i)}\(\Rightarrow\)\textup{(ii)} follows immediately
from Theorem~\ref{thm:cstar-gap}, and \textup{(ii)}\(\Rightarrow\)\textup{(iii)}
is clear because \(\mathcal Z\) is non-commutative.  Finally,
Proposition~\ref{prop:jiang-su} gives
\(\Psi_{\mathcal Z}=C_{\mathrm{Cr}}\), so
\textup{(iii)}\(\Rightarrow\)\textup{(i)}.
\end{proof}

\section{Counterexamples in Banach spaces}\label{sec:counterexamples}

\subsection{A contractive disc calculus with infinite constant}\label{sec:polynomially-bounded}

Let $L(x_1,x_2,\ldots)=(x_2,x_3,\ldots)$ be the unilateral left shift.  We first recall Bohr's inequality and a short interpolation lemma; the polynomially bounded example then follows at once.

\begin{lemma}[Bohr's inequality \cite{Bohr}]\label{lem:bohr}
Let $f(z)=\sum_{k=0}^m a_kz^k$ be a polynomial with $\sup_{|z|\leqslant1}|f(z)|\leqslant1$.  Then
\[
       \sum_{k=0}^m |a_k|3^{-k}\leqslant1.
\]
\end{lemma}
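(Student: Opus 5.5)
The plan is the classical two-step argument: Wiener's coefficient estimate, followed by summation of a geometric series. Write \(s=|a_0|\leqslant1\). If \(s=1\), the maximum-modulus principle forces \(f\equiv a_0\), and the inequality is trivial. So assume \(s<1\).

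\emph{Step 1 (Wiener's estimate).} I will show that \(|a_k|\leqslant1-s^2\) for every \(k\geqslant1\). Fix such a \(k\) and a primitive \(k\)-th root of unity \(\omega\). Averaging over rotations, put
\[
       g(z)=\frac1k\sum_{j=0}^{k-1}f(\omega^jz)=\sum_{\ell\geqslant0}a_{\ell k}z^{\ell k}.
\]
Then \(g\) maps \(\overline\D\) into \(\overline\D\), and \(g(z)=h(z^k)\) for the polynomial \(h(w)=\sum_\ell a_{\ell k}w^\ell\). Since \(z\mapsto z^k\) maps \(\overline\D\) onto \(\overline\D\), \(h\) is also bounded by \(1\) there. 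Its data at the origin are \(h(0)=a_0\) and \(h'(0)=a_k\). The Schwarz--Pick inequality at the origin, which is already used in the proof of Theorem~\ref{thm:degree-two}, gives \(|h'(0)|\leqslant1-|h(0)|^2\). This is exactly \(|a_k|\leqslant1-s^2\).

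\emph{Step 2 (summation).} Using Step 1 and \(\sum_{k\geqslant1}3^{-k}=1/2\),
\[
       \sum_{k=0}^m|a_k|3^{-k}\leqslant s+\frac{1-s^2}2.
\]
The right-hand side is at most \(1\), because
\[
       1-\Bigl(s+\frac{1-s^2}{2}\Bigr)=\frac{(1-s)^2}{2}\geqslant0.
\]

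The only non-routine point is Step 1. There the rotation-averaging trick is needed to isolate the single coefficient \(a_k\) as a first derivative, so that Schwarz--Pick applies. Once that is in place, the remainder is the elementary quadratic inequality above.
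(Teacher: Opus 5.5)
Your proof is correct and complete. The paper gives no proof of this lemma at all: it quotes the result from \cite{Bohr} and uses it only as a black box in Lemma~\ref{lem:interpolation}, where it supplies $\|f(3^{-1}\omega L)\|\leqslant1$ on $\ell_1$ and $\ell_\infty$.

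What you supply is the classical proof of the sharp radius $1/3$, usually attributed to F.~Wiener. Rotation averaging isolates $a_k$ as the derivative at the origin of the Schur function $h$. Schwarz--Pick then gives $|a_k|\leqslant1-|a_0|^2$, and the geometric series finishes the argument.

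One small point is worth stating explicitly. When $s<1$, the maximum-modulus principle gives $h(\D)\subseteq\D$, so Schwarz--Pick applies in its usual form. You have already dispatched the case $s=1$ separately, so nothing is missing.

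Compared with the bare citation, your route makes Lemma~\ref{lem:interpolation} self-contained. It uses only a tool the paper already invokes in the proof of Theorem~\ref{thm:degree-two}. It also gives the slightly sharper bound $\sum_k|a_k|3^{-k}\leqslant|a_0|+\tfrac12(1-|a_0|^2)$, which falls short of $1$ by $\tfrac12(1-|a_0|)^2$. The citation is of course shorter, and nothing in the paper needs more than the stated inequality.
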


Example~6.2 of \cite{BHDVW} uses the same direct-sum geometry and obtains the polynomial bound $2\sqrt3/3$.  The following interpolation estimate is the additional ingredient which lowers that bound to one.

\begin{lemma}\label{lem:interpolation}
Let $1\leqslant p\leqslant\infty$ and set $r_p=3^{-|1-2/p|}$, with the usual interpretation when $p=\infty$.  Then, for every polynomial $f$,
\[
       \|f(r_pL)\|_{\ell_p\to\ell_p}\leqslant\sup_{|z|\leqslant1}|f(z)|.
\]
Consequently, if $r_p>1/2$, then $\|f((1/2)L)\|_{\ell_p\to\ell_p}\leqslant\sup_{|z|\leqslant1}|f(z)|$.
\end{lemma}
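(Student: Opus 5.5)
Write \(f(z)=\sum_{k=0}^m a_kz^k\) and \(\|f\|_\infty=\sup_{|z|\leqslant1}|f(z)|\). The plan is to bound \(\|f(r_pL)\|_{\ell_p\to\ell_p}\) at the three endpoints \(p=1\), \(p=2\), \(p=\infty\), and then interpolate in \(p\).

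For \(p=2\) we have \(r_2=1\). The unilateral left shift \(L\) is a contraction on the Hilbert space \(\ell_2\), so von Neumann's inequality gives \(\|f(L)\|_{\ell_2\to\ell_2}\leqslant\|f\|_\infty\).

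For \(p\in\{1,\infty\}\) we have \(r_p=1/3\). Since \(\|L^k\|\leqslant1\) on \(\ell_1\) and on \(\ell_\infty\), the triangle inequality gives
\[
\|f(L/3)\|\leqslant\sum_{k=0}^m|a_k|3^{-k}.
\]
Lemma~\ref{lem:bohr} bounds the right-hand side by \(\|f\|_\infty\).

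To pass to intermediate \(p\), I would use Stein's interpolation theorem for analytic families of operators rather than a fixed-operator Riesz--Thorin, because the operator itself depends on \(p\). Take \(1<p<2\) first; the case \(2<p<\infty\) is symmetric with \(\ell_\infty\) in place of \(\ell_1\). Write \(1/p=(1-\theta)/1+\theta/2\), so that \(\theta=2-2/p\) and \(1-\theta=2/p-1=|1-2/p|\). On the strip \(0\leqslant\operatorname{Re}w\leqslant1\) define the analytic family
\[
T_w=f\bigl(3^{-(1-w)}L\bigr)=\sum_{k=0}^m a_k3^{-k(1-w)}L^k.
\]
This is a finite sum of bounded operators with entire coefficients, and it is uniformly bounded on the strip, so the admissibility conditions in Stein's theorem hold trivially. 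On \(\operatorname{Re}w=0\) we have \(T_{iy}=f_y(L/3)\), where \(f_y(z)=f(3^{iy}z)\) is a rotation of \(f\); hence the \(\ell_1\) bound \(\|f_y\|_\infty=\|f\|_\infty\) holds. On \(\operatorname{Re}w=1\) we have \(T_{1+iy}=f(3^{iy}L)\), and \(3^{iy}L\) is still a Hilbert-space contraction; hence the \(\ell_2\) bound \(\|f\|_\infty\) holds. Stein's theorem then gives
\[
\|T_\theta\|_{\ell_p\to\ell_p}\leqslant\|f\|_\infty,
\qquad
T_\theta=f\bigl(3^{-|1-2/p|}L\bigr)=f(r_pL).
\]
For \(2<p<\infty\) I would interpolate between \(\ell_2\) and \(\ell_\infty\) in the same way. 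If one wants to avoid the nonseparable endpoint \(\ell_\infty\), one can instead argue by duality from the \(\ell_{p'}\) estimate: the adjoint of \(L\) is the right shift, and \(r_p=r_{p'}\). The same von Neumann and Bohr endpoint bounds apply to the right shift.

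For the consequence, suppose \(r_p>1/2\) and put \(g(z)=f(z/(2r_p))\). Then \(f(L/2)=g(r_pL)\). Since \(1/(2r_p)<1\), the maximum principle gives \(\|g\|_\infty\leqslant\|f\|_\infty\), and the main estimate applied to \(g\) yields \(\|f(L/2)\|_{\ell_p\to\ell_p}\leqslant\|f\|_\infty\).

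The main obstacle is the interpolation step: one has to set up the analytic family correctly and check the boundary behaviour. Stein's theorem is usually stated for simple functions, so the finite sum has to be handled there, and the \(p>2\) range needs the \(\ell_\infty\) endpoint or the duality argument just described. The remaining steps are direct applications of von Neumann's inequality and Lemma~\ref{lem:bohr}.
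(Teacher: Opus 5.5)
Your proposal is correct and follows the paper's proof essentially step for step: the endpoint bounds are von Neumann's inequality on $\ell_2$ and Bohr's inequality on $\ell_1$ and $\ell_\infty$, the analytic family $f(3^{-(1-w)}L)$ is fed into Stein's interpolation theorem, and the rescaling $g(z)=f(z/(2r_p))$ gives the consequence. Your duality alternative for $2<p<\infty$ is a valid substitute for the paper's direct interpolation between $\ell_2$ and $\ell_\infty$ with the family $f(3^{-w}L)$; it uses $L^*=S$ on $\ell_{p'}$ and $r_p=r_{p'}$.
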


\begin{proof}
By homogeneity assume that $\sup_{\overline\D}|f|\leqslant1$ and write $f(z)=\sum_{k=0}^ma_kz^k$.  On $\ell_2$, von Neumann's inequality \cite{vN} gives $\|f(\omega L)\|\leqslant1$ for $|\omega|=1$.  On both $\ell_1$ and $\ell_\infty$, Lemma~\ref{lem:bohr} and $\|L^k\|\leqslant1$ give $\|f(3^{-1}\omega L)\|\leqslant1$.

We regard $\ell_1$, $\ell_2$ and $\ell_\infty$ as compatible Banach spaces inside the vector space of all scalar sequences.  For the complex method, the counting-measure case of \cite[Theorem~5.1.1, pp.~106--107]{BerghLofstrom} gives, with equality of norms,
\[
       [\ell_1,\ell_2]_\theta=\ell_p,
       \qquad \frac1p=1-\frac\theta2,
\]
and
\[
       [\ell_2,\ell_\infty]_\theta=\ell_p,
       \qquad \frac1p=\frac{1-\theta}{2}.
\]

We use Stein's interpolation theorem for analytic families of operators in the precise form recorded in \cite[Exercise~1.6.11, p.~17]{BerghLofstrom}.  In the special case in which both boundary norms are at most one, its proof---an adaptation of the Riesz--Thorin argument in \cite[Theorem~1.1.1, pp.~2--4]{BerghLofstrom} using the three-lines theorem \cite[Lemma~1.1.2, pp.~4--5]{BerghLofstrom}---gives interpolated norm at most one.  In the present situation the operator families are finite sums of entire scalar functions times powers of $L$, so $z\mapsto S_zx$ is analytic for every $x$, and the uniform boundary estimates below make the growth hypothesis in Exercise~1.6.11 automatic.

For $1\leqslant p\leqslant2$, apply this theorem to
\[
       S_z=f(3^{-(1-z)}L)\qquad(0\leqslant\operatorname{Re}z\leqslant1).
\]
On the line $z=it$ one has $3^{-(1-it)}=3^{-1}\omega_t$ with $|\omega_t|=1$, so the $\ell_1$-operator norm is at most one.  On the line $z=1+it$ one has $3^{it}=\omega_t$, so the $\ell_2$-operator norm is at most one.  At the interpolation point $\theta$ determined by
\[
       \frac1p=1-\frac\theta2,
\]
the theorem therefore gives
\[
       \|f(3^{-(1-\theta)}L)\|_{\ell_p\to\ell_p}\leqslant1,
       \qquad
       3^{-(1-\theta)}=3^{-(2/p-1)}.
\]

For $2\leqslant p\leqslant\infty$, apply the same result to
\[
       R_z=f(3^{-z}L).
\]
The line $z=it$ is controlled on $\ell_2$, while on $z=1+it$ one has $3^{-(1+it)}=3^{-1}\omega_t$ and hence the $\ell_\infty$-operator norm is at most one.  If $1/p=(1-\theta)/2$, then
\[
       \|f(3^{-\theta}L)\|_{\ell_p\to\ell_p}\leqslant1,
       \qquad
       3^{-\theta}=3^{-(1-2/p)}.
\]
This proves the first assertion, including the endpoint cases.  Finally, if $r_p>1/2$, apply it to $g(z)=f((2r_p)^{-1}z)$; then $g(r_pL)=f((1/2)L)$ and $\|g\|_{\infty,\overline\D}\leqslant\|f\|_{\infty,\overline\D}$.
\end{proof}

\begin{theorem}\label{thm:polynomially-bounded}
Let $1<p<\infty$, $p\neq2$, and suppose that $3^{-|1-2/p|}>1/2$.  Then there are a Banach space $X_p$ and an operator $T_p\in\calB(X_p)$ such that $\|T_p\|=1$, such that
\[
       \|q(T_p)\|\leqslant\sup_{|z|\leqslant1}|q(z)|\qquad(q\in\C[z]),
\]
but such that $\Psi(T_p,\calB(X_p))=\infty$.
\end{theorem}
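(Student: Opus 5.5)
Take \(X_p=\C\oplus_\infty\ell_p\), the \(\ell_\infty\)-direct sum, and \(T_p=1\oplus\tfrac12L\), where \(L\) is the left shift on \(\ell_p\). This follows the direct-sum geometry of \cite[Example~6.2]{BHDVW}. The proof then splits into three steps: norm one and contractive disc calculus, identification of the numerical range, and unboundedness of \(\Psi\) along the Littlewood test polynomials \(q_n\).

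\emph{Step 1 (norm and calculus).} Clearly \(\|T_p\|=\max\{1,\tfrac12\}=1\). For a polynomial \(q\),
\[
\|q(T_p)\|=\max\{|q(1)|,\ \|q(\tfrac12L)\|_{\ell_p\to\ell_p}\}.
\]
Lemma~\ref{lem:interpolation} applies because \(r_p>1/2\), and it gives \(\|q(\tfrac12L)\|\leqslant\sup_{\overline\D}|q|\). Trivially \(|q(1)|\leqslant\sup_{\overline\D}|q|\), so the contractive calculus on \(\overline\D\) follows.

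\emph{Step 2 (numerical range).} The operator algebra \(\calB(\C)\oplus_\infty\calB(\ell_p)\) embeds unitally and isometrically in \(\calB(X_p)\) as block-diagonal operators. Subalgebra invariance together with \eqref{eq:diagonal-sum} then gives
\[
V(T_p,\calB(X_p))=\conv\bigl(\{1\}\cup V(\tfrac12L,\calB(\ell_p))\bigr)=\conv\bigl(\{1\}\cup\tfrac12V(L,\calB(\ell_p))\bigr).
\]
Next I need \(V(L,\calB(\ell_p))\). Since \(\|L\|=1\), it lies in \(\overline\D\), and it contains \(\sigma(L)=\overline\D\). Hence \(V(L)=\overline\D\), and
\[
V(T_p)=K:=\conv\bigl(\{1\}\cup\tfrac12\overline\D\bigr),
\]
a compact convex set whose only point on the unit circle is \(1\).

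\emph{Step 3 (unboundedness).} I must produce polynomials that are small on \(K\) but whose value at \(\tfrac12L\) on \(\ell_p\) is large. Lemma~\ref{lem:interpolation} shows that the disc sup-norm gives no growth, so the polynomials must exploit that \(K\) is much smaller than \(\overline\D\) away from \(1\).

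First attempt: test polynomials of the form \(g_n(z)=q_n(2\alpha z)\cdot\phi(z)^N\). Here \(\phi\) is a polynomial with \(|\phi|\leqslant1\) on \(K\), \(|\phi(1)|\) small, and \(\phi\) close to a constant of modulus \(1\) on \(\tfrac12\overline\D\); such \(\phi\) is available because \(1\) is exposed in \(K\).

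Cleaner alternative: use that \(p\mapsto p(\tfrac12L)\) on \(\ell_p\) with \(p\neq2\) is not bounded by the sup-norm over \(\tfrac12\overline\D\) scaled by any \(r>1/2\). Concretely, I would show that \(\|q_n(L)\|_{\ell_p}\gtrsim n^{\max(1/p,1-1/p)}\), by testing on \(e_n\) or on the dual side, since the last column of \(q_n(L)\) restricted to the first \(n\) coordinates has \(n\) entries \(\pm1\). This beats the bound \(\sqrt{6n}\) on \(\overline\D\) from Balister's estimate.

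Then set \(P_n(z)=q_n(2z)\cdot h(z)\), where \(h\) is a fixed polynomial with \(h\approx1\) on \(\tfrac12\overline\D\) and \(h\) small near \(1\) relative to \(q_n(2\cdot)\). The hardest step is controlling \(P_n\) on the segment region of \(K\) near \(1\), where \(|q_n(2z)|\) is huge. A plain polynomial \(h\) cannot kill exponential growth \(2^n\), so this needs care.

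Because of this, the likely correct route is the degree-reducing one. Use \(p_n(z)=q_n(z)\) composed with a conformal-type polynomial approximation \(\psi\) mapping \(K\) into \(\overline\D\) with \(\psi(\tfrac12L)\) similar to a rescaled shift of radius \(>1/2\). Then compare \(\|q_n(\psi(\tfrac12L))\|\approx n^{|1/2-1/p|}\sqrt n\) against \(\sup_K|q_n\circ\psi|\leqslant\sqrt{6n}\), so the ratio grows like \(n^{|1/2-1/p|}\to\infty\).

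The main obstacle is establishing the lower bound on \(\|q_n(\cdot)\|_{\ell_p}\) for the resulting weighted-shift-like operator. I would first try exact choices, such as a Möbius map sending \(K\) into \(\overline\D\) and \(\tfrac12\overline\D\) onto a disc of radius near \(1\), approximated by polynomials.
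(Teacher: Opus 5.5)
Your Steps~1 and~2 are correct. Step~3, however, is never carried out, and the obstruction you run into comes from your choice of the norm-one summand. The scalar $1$ has numerical radius equal to its norm, so $V(T_p)=K=\conv(\{1\}\cup\frac12\overline\D)$ contains the segment $[\frac12,1]$. On that segment $q_n(2z)$ is exponentially large in $n$. Since $\|q_n(2T_p)\|=\max\{|q_n(2)|,\|q_n(L)\|\}\leqslant\max\{\sup_K|q_n(2z)|,\,n\}$, the natural Littlewood test only gives a ratio tending to $1$.

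None of your proposed repairs closes this gap.
\begin{enumerate}
\item For the cutoff factors $h$ or $\phi^N$ you give no construction. You also give no control of $\|h(\frac12L)\|_{\ell_p}$. Knowing $h$ on $K$ does not bound it, because Lemma~\ref{lem:interpolation} only controls $\|h(\frac12L)\|$ by $\sup_{\overline\D}|h|$.
\item The M\"obius route cannot work as described. Since $K\supsetneq\frac12\overline\D$, any M\"obius map $\psi$ with $\psi(K)\subseteq\overline\D$ sends $\frac12\overline\D$ onto a \emph{proper} subdisc of $\overline\D$. If this subdisc avoids $\T$, then $w\mapsto q_n(\psi(w/2))$ is bounded by $\Delta\sqrt n$ on a disc of radius $>1$, and Cauchy estimates give $\|(q_n\circ\psi)(\frac12L)\|=O(\sqrt n)$, so there is no gain. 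In the tangent case nothing in the proposal supports the asserted size $n^{|1/2-1/p|}\sqrt n$.
\end{enumerate}
It is not even clear that $\Psi(1\oplus\frac12L)$ is infinite. Proving it would require a genuinely new localisation argument.

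The missing idea is to take a norm-one summand that has a contractive disc calculus and whose numerical range stays inside $\frac12\overline\D$. The square-zero Jordan block $E=\begin{pmatrix}0&1\\0&0\end{pmatrix}$ on $\ell_2^2$ does exactly this:
\begin{enumerate}
\item $\|E\|=1$ and $V(E,\calB(\ell_2^2))=W(E)=\frac12\overline\D$;
\item von Neumann's inequality gives $\|q(E)\|\leqslant\sup_{\overline\D}|q|$.
\end{enumerate}
Now take $X_p=\ell_2^2\oplus_\infty\ell_p$ and $T_p=E\oplus\frac12L$. Your Step~1 goes through, with von Neumann's inequality replacing the trivial bound $|q(1)|\leqslant\sup_{\overline\D}|q|$. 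Formula~\eqref{eq:diagonal-sum} then gives $V(T_p)=\frac12\overline\D$. Your ``cleaner alternative'' becomes the entire proof: $p_n(z)=q_n(2z)$ satisfies $\sup_{V(T_p)}|p_n|\leqslant\Delta\sqrt n$ and $\|p_n(T_p)\|\geqslant\|q_n(L)\|\geqslant n^{\max\{1/p,1/p'\}}$. The ratio is therefore at least $\Delta^{-1}n^{\max\{1/p,1/p'\}-1/2}$, which tends to infinity because $p\neq2$. This is the construction used in the paper.
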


\begin{proof}[Proof of Theorem~\ref{thm:polynomially-bounded}]
Fix $p$ as in the statement and let
\[
       X_p=\ell_2^2\oplus_\infty\ell_p,\qquad
       T_p=E\oplus \frac12L,\qquad
       E=\begin{pmatrix}0&1\\0&0\end{pmatrix}\in\calB(\ell_2^2).
\]
Then $\|T_p\|=1$.  Since $E$ is a Hilbert-space contraction and Lemma \ref{lem:interpolation} applies to $(1/2)L$ on $\ell_p$, we have $\|q(T_p)\|\leqslant\sup_{|z|\leqslant1}|q(z)|$ for every polynomial $q$.

The numerical range of $E$ on $\ell_2^2$ is $\frac12\overline\D$, and $V((1/2)L)\subseteq\frac12\overline\D$.  Formula~\eqref{eq:diagonal-sum}, together with subalgebra invariance of the algebraic numerical range inside $\calB(X_p)$, therefore gives $V(T_p)=\frac12\overline\D$.

Let $N_n$ be the left shift on $\ell_p^n$ and take the fixed Rudin--Shapiro Littlewood polynomial $q_n$ from the preliminaries.  If $p'$ denotes the conjugate exponent of $p$, then
\[
       \|q_n(N_n)\|_{\ell_p^n\to\ell_p^n}
       \geqslant n^{\max\{1/p,1/p'\}}.
\]
Indeed, evaluation at $e_n$ gives $n^{1/p}$, while the transpose evaluated at $e_1^*$ gives $n^{1/p'}$.  Let $Q_n:\ell_p^n\to\ell_p$ be the coordinate embedding onto the first $n$ coordinates and let $P_n:\ell_p\to\ell_p^n$ be the corresponding coordinate projection.  Then $P_nq_n(L)Q_n=q_n(N_n)$, and so $\|q_n(L)\|_{\ell_p\to\ell_p}\geqslant n^{\max\{1/p,1/p'\}}$.  Put $p_n(z)=q_n(2z)$.  Then $p_n((1/2)L)=q_n(L)$ and $\sup_{V(T_p)}|p_n|\leqslant\Delta\sqrt n$.  Since $p\neq2$,
\[
       \frac{\|p_n(T_p)\|}{\sup_{V(T_p)}|p_n|}
       \geqslant \Delta^{-1}n^{\max\{1/p,1/p'\}-1/2}\longrightarrow\infty.
\]
Thus $\Psi(T_p,\calB(X_p))=\infty$.
\end{proof}

\begin{corollary}\label{cor:contractive-polynomial-calculus}
There is an operator which is polynomially bounded with constant one and nevertheless has infinite algebraic numerical-range spectral constant.  Hence Question~2 of \cite[Section~8]{BHDVW} has an affirmative answer.
\end{corollary}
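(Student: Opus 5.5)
The corollary should follow from Theorem~\ref{thm:polynomially-bounded} once we exhibit an admissible exponent. Its hypotheses are $1<p<\infty$, $p\neq2$, and $3^{-|1-2/p|}>1/2$. The last condition is equivalent to $|1-2/p|\log3<\log2$, that is, $|1-2/p|<\log2/\log3\approx0.6309$. This defines a nonempty open interval of exponents around $p=2$. I plan to take $p=3$: then $|1-2/3|=1/3$ and $3^{-1/3}\approx0.693>1/2$. Alternatively, $p=3/2$ gives the same value $|1-4/3|=1/3$.

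With this choice, Theorem~\ref{thm:polynomially-bounded} provides the Banach space $X_3=\ell_2^2\oplus_\infty\ell_3$ and the operator $T_3=E\oplus\frac12L$. These satisfy $\|T_3\|=1$ and $\|q(T_3)\|\leqslant\sup_{|z|\leqslant1}|q(z)|$ for every polynomial $q$. This is exactly polynomial boundedness with constant one. The theorem also gives $\Psi(T_3,\calB(X_3))=\infty$, which is the first sentence of the corollary.

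For the second sentence, I would recall that Question~2 of \cite[Section~8]{BHDVW} asks whether a polynomially bounded operator can have infinite numerical-range spectral constant. The example above answers it affirmatively, even with the optimal polynomial bound one. This improves on the bound $2\sqrt3/3$ obtained in \cite[Example~6.2]{BHDVW}.

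No step is a genuine obstacle here. The only check needed is the numerical verification that some $p\neq2$ meets the interpolation threshold of Lemma~\ref{lem:interpolation}, and $p=3$ does.
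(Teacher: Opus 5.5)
Your proposal is correct and is exactly the paper's proof: take $p=3$ in Theorem~\ref{thm:polynomially-bounded}, using $3^{-1/3}>1/2$. The bound being exactly one is what gives the corollary its substance, since \cite[Example~6.2]{BHDVW} already had a polynomially bounded example with bound $2\sqrt3/3$ and infinite constant; your choice of $p$ delivers that bound.
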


\begin{proof}[Proof of Corollary~\ref{cor:contractive-polynomial-calculus}]
Take $p=3$ in Theorem~\ref{thm:polynomially-bounded}, since $3^{-1/3}>1/2$.
\end{proof}

\begin{remark}
The interpolation lemma naturally suggests the shift Bohr radius
\[
       R_X=\sup\{r\in[0,1]:\|f(rS)\|\leqslant\sup_{|z|\leqslant1}|f(z)|\text{ for all polynomials }f\}
\]
for a Banach sequence space $X$ whose coordinate shift $S$ is bounded.  For an abstract Banach space this definition presupposes a specified coordinate structure, for example a Schauder basis $(e_j)$ and the associated shift $Se_j=e_{j+1}$ (or the corresponding left shift); unconditionality is not required merely to define $R_X$.  On a Banach sequence space the coordinate shift is already specified, but its boundedness must still be assumed.  The lemma says that $R_{\ell_p}\geqslant3^{-|1-2/p|}$.  Determining $R_{\ell_p}$ exactly would refine the distinction between polynomial boundedness and finite algebraic numerical-range spectral constant.
\end{remark}

\subsection{The canonical shift on combinatorial spaces}\label{sec:shift-obstructions}

\begin{theorem}\label{thm:combinatorial}
Let $\calF$ be a family of finite subsets of $\N$, containing sets of arbitrarily large cardinality and covering $\N$, and suppose it is spreading in the sense that $\{l_1<\cdots<l_m\}\in\calF$ and $l_j\leqslant k_j$ for all $j$ imply $\{k_1<\cdots<k_m\}\in\calF$.  Let $\calS$ be the completion of $c_{00}$ for
\[
       \|x\|_{\calS}=\sup_{F\in\calF}\sum_{i\in F}|x_i|.
\]
Then the left shift $L(x_1,x_2,\ldots)=(x_2,x_3,\ldots)$ is a contraction on $\calS$ and $\Psi(L,\calB(\calS))=\infty$.
\end{theorem}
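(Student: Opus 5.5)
The plan is to verify contractivity directly from the spreading hypothesis. Then I will test the Rudin--Shapiro Littlewood polynomials \(q_n\) from the preliminaries on suitable unit vectors, exactly as in the proof of Theorem~\ref{thm:polynomially-bounded}. The only information about the numerical range I need is the trivial inclusion \(V(L,\calB(\calS))\subseteq\|L\|\,\overline\D\subseteq\overline\D\), which holds because \(|\langle L,\varphi\rangle|\leqslant\|L\|\) for every state \(\varphi\).

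\emph{Contractivity.} Let \(x\in c_{00}\) and \(F\in\calF\). Then
\[
  \sum_{i\in F}|(Lx)_i|=\sum_{i\in F}|x_{i+1}|=\sum_{j\in F+1}|x_j|.
\]
If \(F=\{l_1<\cdots<l_m\}\), then \(F+1=\{l_1+1<\cdots<l_m+1\}\) dominates \(F\) coordinatewise, so \(F+1\in\calF\) by spreading. Hence \(\|Lx\|_{\calS}\leqslant\|x\|_{\calS}\). By density \(L\) extends to a contraction on \(\calS\), which coincides with the left shift on the completion.

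\emph{Lower bound for \(\|q_n(L)\|\).} Fix \(n\) and choose \(F=\{l_1<\cdots<l_n\}\in\calF\) of cardinality \(n\); if \(\calF\) only contains larger sets, I first need to produce an \(n\)-element member, and otherwise take \(n\) to be a cardinality that occurs, which is enough since arbitrarily large cardinalities occur. Pick \(N\geqslant l_n+n\) and put \(k_j=N-n+j\). Since \(l_j\leqslant l_n-(n-j)\leqslant N-n+j=k_j\), spreading gives the interval \(I=\{N-n+1,\ldots,N\}\in\calF\).

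Now test on \(e_N\). Because \(\calF\) covers \(\N\), some member contains \(N\), so \(\|e_N\|_{\calS}=1\). Moreover
\[
  q_n(L)e_N=\sum_{k=0}^{n-1}\eps_k e_{N-k}
\]
has \(n\) entries of modulus one supported on \(I\). Therefore \(\|q_n(L)\|\geqslant\|q_n(L)e_N\|_{\calS}\geqslant n\).

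\emph{Conclusion.} Since \(V(L)\subseteq\overline\D\), Balister's estimate gives \(\sup_{V(L)}|q_n|\leqslant\Delta\sqrt n\). The denominator is non-zero because \(V(L)\) is non-empty and \(|q_n(0)|=1\), while \(\sigma(L)\subseteq V(L)\) need not contain \(0\). Hence \(\Psi(L,\calB(\calS))\geqslant n/(\Delta\sqrt n)=\sqrt n/\Delta\to\infty\).

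\emph{Main obstacle.} The analysis is elementary, so the step needing most care is bookkeeping. I must check that the spreading property, which only lets us push elements upward, really produces the consecutive block \(I\) and the shifted sets \(F+1\). I must also check that the norm of a vector in the completion is computed by the same supremum formula; this holds for \(c_{00}\) vectors, which is all the argument uses.
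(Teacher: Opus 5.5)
Your proposal is correct and is essentially the paper's proof. The paper pushes an $n$-element member of $\calF$ up to a consecutive block $B_n\in\calF$, compresses to $P_nq_n(L)Q_n=q_n(N_n)$ on $\ell_1^n$, and evaluates at $e_n$; that is exactly your evaluation of $q_n(L)$ at the top vector $e_N$ of the block $I$.

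One small correction. Your stated reason that the denominator is non-zero does not work: $V(L)\neq\varnothing$ together with $|q_n(0)|=1$ gives nothing unless $0\in V(L)$. Your aside that $\sigma(L)$ need not contain $0$ is also false here. Since $Le_1=0$ and $\|e_1\|_{\calS}=1$ (by covering), $0\in\sigma(L)\subseteq V(L)$, and hence $\sup_{V(L)}|q_n|\geqslant|q_n(0)|=1$.
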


Theorem~\ref{thm:combinatorial} strengthens \cite[Theorem~6.3]{BHDVW}.  That result proves $\Psi_{\calB(\calS)}=\infty$ by using a family of cut shifts, whereas the theorem above exhibits one fixed, canonical operator---the left shift---with infinite constant.  It therefore gives a negative answer to Question~3 of \cite[Section~8]{BHDVW}; the following corollary records the classical Schreier space explicitly.

\begin{corollary}\label{cor:schreier-shift}
Let
\[
       \calF_1=\{\varnothing\}\cup\{F\subseteq\N: F\neq\varnothing,\ |F|\leqslant \min F\}
\]
be the classical Schreier family, and let $\calS_1$ be the corresponding Schreier space.  Then the left shift is a contraction on $\calS_1$ and
$\Psi(L,\calB(\calS_1))=\infty$.
\end{corollary}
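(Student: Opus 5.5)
The corollary should follow directly from Theorem~\ref{thm:combinatorial}. The only task is to check that the classical Schreier family \(\calF_1\) meets the three hypotheses: it covers \(\N\), it contains sets of arbitrarily large cardinality, and it is spreading. The space \(\calS_1\) is exactly the space \(\calS\) built from \(\calF_1\) in the theorem. Including \(\varnothing\) in \(\calF_1\) does not change the norm, since it contributes an empty sum. Once the hypotheses are checked, the theorem gives both conclusions: \(L\) is a contraction on \(\calS_1\), and \(\Psi(L,\calB(\calS_1))=\infty\).

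\emph{Covering and large sets.} For each \(k\in\N\), the singleton \(\{k\}\) satisfies \(|\{k\}|=1\leqslant k=\min\{k\}\). So \(\{k\}\in\calF_1\), and \(\calF_1\) covers \(\N\). For each \(m\), the set \(\{m,m+1,\ldots,2m-1\}\) has cardinality \(m\) and minimum \(m\), so it lies in \(\calF_1\). Hence \(\calF_1\) contains sets of every finite cardinality.

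\emph{Spreading.} Suppose \(\{l_1<\cdots<l_m\}\in\calF_1\) and \(l_j\leqslant k_j\) with \(k_1<\cdots<k_m\). Then
\[
       m\leqslant l_1\leqslant k_1=\min\{k_1,\ldots,k_m\},
\]
so \(\{k_1<\cdots<k_m\}\in\calF_1\). The empty set is handled trivially, since only \(\varnothing\) spreads to \(\varnothing\).

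No step here is a genuine obstacle; all the analytic work sits inside Theorem~\ref{thm:combinatorial}. The one point to keep explicit is that the norm \(\sup_{F\in\calF_1}\sum_{i\in F}|x_i|\) is the standard Schreier norm, so the completion of \(c_{00}\) under it is the classical Schreier space \(\calS_1\).
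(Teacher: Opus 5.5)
Your proposal is correct and follows the paper's own proof. The paper likewise checks that $\calF_1$ is spreading, covers $\N$, and contains the sets $\{n,n+1,\ldots,2n-1\}$, and then applies Theorem~\ref{thm:combinatorial} directly; your write-up simply makes these three verifications explicit.
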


\begin{proof}[Proof of Theorem~\ref{thm:combinatorial}]
If $F\in\calF$, then $F+1\in\calF$ by spreading; hence the left shift is contractive on $c_{00}$ for the norm of $\calS$ and therefore extends to a contraction on $\calS$.

Call $n$ an admissible cardinality if some member of $\calF$ has cardinality $n$.  Let $n$ be admissible and choose $\{k_1<\cdots<k_n\}\in\calF$.  Since $k_j\leqslant k_n+j-1$, spreading gives the consecutive block $B_n=\{k_n,k_n+1,\ldots,k_n+n-1\}\in\calF$.  Let $Q_n:\ell_1^n\to\calS$ embed $\ell_1^n$ onto the coordinates in $B_n$, and let $P_n:\calS\to\ell_1^n$ be the coordinate projection onto $B_n$.  Then $\|Q_n\|=1$, because $B_n\in\calF$, and $\|P_n\|\leqslant1$ by definition of the norm.  If $N_n$ denotes the left shift on $\ell_1^n$, then $P_nL^kQ_n=N_n^k$ for every $k\geqslant0$, and consequently $P_nq_n(L)Q_n=q_n(N_n)$.

For the fixed polynomial $q_n$ from the preliminaries,
\[
       q_n(N_n)e_n=\sum_{k=0}^{n-1}\eps_ke_{n-k},
       \qquad
       \|q_n(N_n)\|_{\ell_1^n\to\ell_1^n}\geqslant n.
\]
Hence $\|q_n(L)\|_{\calS\to\calS}\geqslant n$.  Since $\|L\|\leqslant1$,
we have $V(L)\subseteq\overline\D$ and
$\sup_{V(L)}|q_n|\leqslant\Delta\sqrt n$.  As the admissible cardinalities
$n$ are unbounded,
\[
       \frac{\|q_n(L)\|}{\sup_{V(L)}|q_n|}
       \geqslant\Delta^{-1}\sqrt n
\]
is unbounded.  Therefore $\Psi(L,\calB(\calS))=\infty$.
\end{proof}

\begin{proof}[Proof of Corollary~\ref{cor:schreier-shift}]
The family $\calF_1$ is spreading, covers $\N$, and contains sets of arbitrarily large cardinality; for instance, $\{n,n+1,\ldots,2n-1\}\in\calF_1$.  The assertion is therefore an immediate consequence of Theorem~\ref{thm:combinatorial}.
\end{proof}

\section{Further shift consequences}\label{sec:further-shifts}

\subsection{Finite-dimensional, Calkin and subsymmetric tests}

We finish by recording several further forms of the shift obstruction.  The finite-dimensional version is often the most transparent.

\begin{proposition}\label{prop:finite-shift-test}
Let $X$ have a normalised $1$-subsymmetric basis $(e_j)_{j\geqslant1}$, put $E_n=[e_1,\ldots,e_n]$, and let $J_n\in\calB(E_n)$ be the nilpotent right shift $J_ne_j=e_{j+1}$ $(j<n)$ and $J_ne_n=0$.  If
\[
       \phi_X(n)=\left\|\sum_{j=1}^n e_j\right\|,
\]
then
\[
       \Psi(J_n,\calB(E_n))\geqslant \Delta^{-1}
       \max\left\{\frac{\phi_X(n)}{\sqrt n},\frac{\sqrt n}{\phi_X(n)}\right\}.
\]
\end{proposition}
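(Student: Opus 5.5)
Since $(e_j)$ is $1$-subsymmetric, it is in particular $1$-spreading and $1$-unconditional. Spreading gives $\|J_nx\|=\|x\|$ whenever $x\in[e_1,\ldots,e_{n-1}]$, and unconditionality lets us drop the coordinate killed by $J_n$. Hence $\|J_n\|\leqslant1$, so $V(J_n,\calB(E_n))\subseteq\overline\D$. For the Rudin--Shapiro polynomial $q_n$ fixed in the preliminaries, this gives
\[
       \sup_{z\in V(J_n)}|q_n(z)|\leqslant\Delta\sqrt n .
\]
It therefore suffices to prove
\[
       \|q_n(J_n)\|\geqslant\max\{\phi_X(n),\,n/\phi_X(n)\},
\]
because dividing by $\Delta\sqrt n$ yields exactly the two terms in the maximum.

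For the first bound, evaluate at $e_1$:
\[
       q_n(J_n)e_1=\sum_{k=0}^{n-1}\eps_ke_{k+1}.
\]
By $1$-unconditionality the signs may be removed without changing the norm, since the basis constant one permits sign changes isometrically. This gives $\|q_n(J_n)e_1\|=\phi_X(n)$, and so $\|q_n(J_n)\|\geqslant\phi_X(n)$.

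For the second bound, I use duality. Let $(e_j^*)$ be the biorthogonal functionals restricted to $E_n$. The adjoint $J_n^*$ acts as the left shift on these coordinates, so
\[
       q_n(J_n)^*e_n^*=\sum_{k=0}^{n-1}\eps_ke_{n-k}^*.
\]
The dual basis of a $1$-unconditional basis is again $1$-unconditional, so this functional has norm $\|\sum_{j=1}^ne_j^*\|$ computed in $E_n^*$. Testing it on the vector $\sum_{j=1}^n e_j$ gives
\[
       \Bigl\|\sum_{j=1}^n e_j^*\Bigr\|\geqslant\frac{n}{\phi_X(n)}.
\]
Since $\|e_n^*\|=1$ (normalisation together with the basis constant one), we get $\|q_n(J_n)\|=\|q_n(J_n)^*\|\geqslant n/\phi_X(n)$.

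Combining the two bounds with the upper estimate for $\sup_{V(J_n)}|q_n|$ gives the assertion. The only delicate point I expect is justifying that $1$-subsymmetry, in whatever convention the paper uses (spreading plus unconditional with constant one), gives the isometric sign invariance and $\|J_n\|\leqslant1$. If the definition only provides spreading and unconditionality separately with constant one, both facts follow directly from those two properties.
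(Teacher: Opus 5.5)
Your proof is correct and follows the paper's argument essentially step for step: $\|J_n\|\leqslant1$ from spreading plus suppression unconditionality, the Rudin--Shapiro polynomial $q_n$, the bound $\phi_X(n)$ from evaluating at $e_1$, and the bound $n/\phi_X(n)$ from $q_n(J_n)^*e_n^*=\sum_{k=0}^{n-1}\eps_ke_{n-k}^*$. The only difference is cosmetic: you use $1$-unconditionality of the dual basis to identify the norm of this functional with $\|\sum_{j=1}^ne_j^*\|$, whereas the paper pairs it directly with the signed vector $\sum_{k=0}^{n-1}\eps_ke_{n-k}$, which has norm $\phi_X(n)$ and gives the value $n$ without that extra step.
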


\begin{proof}
The operator $J_n$ is a contraction: for scalars $a_j$,
\[
       \left\|\sum_{j=1}^{n-1}a_je_{j+1}\right\|
       =\left\|\sum_{j=1}^{n-1}a_je_j\right\|
       \leqslant \left\|\sum_{j=1}^{n}a_je_j\right\|,
\]
using subsymmetry and then suppression unconditionality.  Thus $V(J_n)\subseteq\overline\D$.  Use the fixed Rudin--Shapiro polynomial $q_n(z)=\sum_{k=0}^{n-1}\eps_kz^k$ from the preliminaries, for which $\|q_n\|_{\overline\D}\leqslant\Delta\sqrt n$.  By $1$-unconditionality,
\[
       \|q_n(J_n)e_1\|=\left\|\sum_{k=0}^{n-1}\eps_ke_{k+1}\right\|=\phi_X(n).
\]
Moreover, the coordinate functionals have norm one, and
$q_n(J_n)^*e_n^*=\sum_{k=0}^{n-1}\eps_ke_{n-k}^*$.  Evaluating this functional on $\sum_{k=0}^{n-1}\eps_ke_{n-k}$ gives $n$, while the latter vector has norm $\phi_X(n)$.  Hence $\|q_n(J_n)\|\geqslant\max\{\phi_X(n),n/\phi_X(n)\}$.  Dividing by $\sup_{V(J_n)}|q_n|\leqslant\Delta\sqrt n$ gives the assertion.
\end{proof}

The next proposition records the same obstruction in the Calkin algebra.  It is a Banach-space analogue of the fact that, for Hilbert-space contractions, passing to the Calkin algebra does not create a Crouzeix obstruction.

\begin{proposition}\label{prop:calkin-shift}
Let $1<p<\infty$, $p\neq2$, let $S$ be the unilateral right shift on $\ell_p$, and let $\pi:\calB(\ell_p)\to\calQ_p:=\calB(\ell_p)/\calK(\ell_p)$ be the quotient map.  Then
\[
       V(\pi(S),\calQ_p)=\overline\D
       \qquad\text{and}\qquad
       \Psi(\pi(S),\calQ_p)=\infty.
\]
\end{proposition}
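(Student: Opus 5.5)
The plan has two parts: first identify the numerical range as \(\overline\D\), then show that the Rudin--Shapiro polynomials \(q_n\) make \(\|q_n(\pi(S))\|\) grow faster than \(\sqrt n\).

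\emph{Numerical range.} Since \(\|\pi(S)\|\leqslant\|S\|=1\), we have \(V(\pi(S),\calQ_p)\subseteq\overline\D\). For the reverse inclusion, recall that the essential spectrum of \(S\) is \(\T\), so \(\T\subseteq\sigma(\pi(S))\subseteq V(\pi(S),\calQ_p)\). Convexity then gives \(V(\pi(S),\calQ_p)=\overline\D\). As a consequence, for every polynomial \(q\),
\[
       \sup_{V(\pi(S))}|q|=\|q\|_{\infty,\overline\D}.
\]
For \(q=q_n\) this supremum is at most \(\Delta\sqrt n\).

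\emph{Lower bound for the essential norm.} It suffices to show that
\[
       \|q_n(S)+K\|\geqslant c\,n^{\max\{1/p,1/p'\}}
\]
for every compact \(K\) and some \(c>0\) independent of \(n\) and \(K\). Then, since \(p\neq2\),
\[
       \frac{\|q_n(\pi(S))\|}{\sup_{V}|q_n|}\geqslant \Delta^{-1}c\,n^{\max\{1/p,1/p'\}-1/2}\to\infty.
\]

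For the \(n^{1/p}\) bound, I would test on the translated unit vectors \(e_N\) and let \(N\to\infty\). We have \(q_n(S)e_N=\sum_k\eps_ke_{N+k}\), whose norm is \(n^{1/p}\). Since \(e_N\to0\) weakly and \(1<p<\infty\), compactness gives \(\|Ke_N\|\to0\). Hence \(\|q_n(S)+K\|\geqslant n^{1/p}\).

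For the \(n^{1/p'}\) bound, I would use the vectors
\[
       x_N=n^{-1/p}\sum_{k=0}^{n-1}\eps_{n-1-k}e_{N+k}.
\]
These are unit vectors and tend to \(0\) weakly as \(N\to\infty\). The coordinate of \(q_n(S)x_N\) at position \(N+n-1\) equals \(n^{-1/p}\sum_k\eps_k^2=n^{1-1/p}=n^{1/p'}\). Again \(\|Kx_N\|\to0\), so \(\|q_n(S)+K\|\geqslant n^{1/p'}\). Equivalently, one can argue through the transpose, the left shift on \(\ell_{p'}\), together with Schauder's theorem.

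\emph{Main obstacle.} The only delicate step is justifying that compact operators kill weakly null sequences. This is standard: \(K\) maps weakly null sequences to norm null ones, and reflexivity for \(1<p<\infty\) ensures the block vectors are weakly null. With that in hand, the argument reduces to the proof of Theorem~\ref{thm:polynomially-bounded} carried over to the Calkin algebra.
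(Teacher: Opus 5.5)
Your proof is correct and follows essentially the same route as the paper. You get \(V(\pi(S))=\overline\D\) from \(\|\pi(S)\|\leqslant1\), \(\T\subseteq\sigma(\pi(S))\) and convexity, and then bound the essential norm of \(q_n(S)\) below by \(\max\{n^{1/p},n^{1/p'}\}\) using weakly null test sequences. The differences are minor: you cite \(\sigma_{\mathrm e}(S)=\T\) where the paper derives \(\T\subseteq\sigma(\pi(S))\) from approximate eigenvectors and Atkinson's theorem, and you get the \(n^{1/p'}\) bound from explicit unit vectors \(x_N\) rather than by applying \((q_n(S)-K)^*\) to \(e^*_{m+n-1}\).
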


\begin{proof}
Let $L$ denote the left shift.  Since $LS=I$ and $SL=I-P_1$, where $P_1$ has rank one, $\pi(S)$ is invertible with inverse $\pi(L)$.  Also $\|\pi(S)\|\leqslant1$ and $\|\pi(L)\|\leqslant1$.  Thus, if $\mu\in\sigma(\pi(S))$, then $|\mu|\leqslant1$ and $|\mu|^{-1}\leqslant\|\pi(S)^{-1}\|\leqslant1$, so $|\mu|=1$.  Hence $\sigma(\pi(S))\subseteq\T$.  Conversely, if $\lambda\in\T$, the vectors
\[
       x_N=N^{-1/p}\sum_{j=1}^N\lambda^{-(j-1)}e_j
\]
satisfy $\|(S-\lambda I)x_N\|\to0$.  The sequence $(x_N)$ is bounded and coordinatewise null, hence weakly null in the reflexive space $\ell_p$.  If $S-\lambda I$ were Fredholm, Atkinson's theorem \cite[Chapter~XI]{Conway} would give $B\in\calB(\ell_p)$ and compact $K$ with $B(S-\lambda I)=I-K$.  Compactness then gives $Kx_N\to0$, and hence $x_N=B(S-\lambda I)x_N+Kx_N\to0$, a contradiction.  Thus $\lambda\in\sigma(\pi(S))$.  Hence $\sigma(\pi(S))=\T$.  Since algebraic numerical ranges are convex and contain the spectrum, while $\|\pi(S)\|\leqslant1$, we have the complete chain
\[
       \overline\D=\conv\T
       \subseteq V(\pi(S),\calQ_p)
       \subseteq\|\pi(S)\|\overline\D
       \subseteq\overline\D.
\]
Therefore $V(\pi(S),\calQ_p)=\overline\D$.

Use the fixed Rudin--Shapiro polynomials $q_n(z)=\sum_{k=0}^{n-1}\eps_kz^k$ from the preliminaries.  We claim that the essential norm of $q_n(S)$ is at least $\max\{n^{1/p},n^{1/p'}\}$, where $p'$ is the conjugate exponent.  Indeed, $(e_m)$ and $(e_m^*)$ are weakly null in $\ell_p$ and $\ell_{p'}$, respectively, and compact operators send weakly null sequences to norm-null sequences.  Hence, for every compact $K$, one has $Ke_m\to0$ and $K^*e_m^*\to0$.  Applying $q_n(S)-K$ to $e_m$ and $(q_n(S)-K)^*$ to $e_{m+n-1}^*$ and letting $m\to\infty$ gives the two bounds $n^{1/p}$ and $n^{1/p'}$.  Therefore
\[
       \|q_n(\pi(S))\|=\|q_n(S)\|_{\rm e}
       \geqslant\max\{n^{1/p},n^{1/p'}\}.
\]
Since $V(\pi(S))=\overline\D$ and $\|q_n\|_{\overline\D}\leqslant\Delta\sqrt n$, the quotient defining $\Psi(\pi(S))$ is bounded below by $\Delta^{-1}n^{\max\{1/p,1/p'\}-1/2}$, which tends to infinity because $p\neq2$.
\end{proof}

We also record a general infinite-dimensional form.    If $X$ is a Banach space and $T\in\calB(X)$, let
\[
       W_X(T)=\{\langle Tx,x^*\rangle:\|x\|=\|x^*\|=1=\langle x,x^*\rangle\}
\]
be the spatial numerical range.  We shall use only the elementary inclusion
$W_X(T)\subseteq V(T,\calB(X))$.  The following proposition is in the spirit of the unilateral-shift argument in \cite[Proposition~4.15]{MT}; it gives a stronger algebraic-numerical-range conclusion.

\begin{proposition}\label{prop:subsym-shift}
Let $X$ have a normalised $1$-subsymmetric basis $(e_j)_{j\geqslant1}$, and let $S e_j=e_{j+1}$ be the right shift.  Put
\[
       \phi_X(n)=\left\|\sum_{j=1}^n e_j\right\|\qquad(n\geqslant1).
\]
If
\[
       \sup_{n\geqslant1}\max\left\{\frac{\phi_X(n)}{\sqrt n},\frac{\sqrt n}{\phi_X(n)}\right\}=\infty,
\]
then $\Psi(S,\calB(X))=\infty$.  Consequently, there is no constant $C$ such that
\[
       \|p(T)\|\leqslant C\sup_{z\in V(T,\calB(X))}|p(z)|\qquad(p\in\C[z],\ T\in\calB(X)),
\]
not even with $V(T,\calB(X))$ replaced by the spatial numerical range $W_X(T)$.
\end{proposition}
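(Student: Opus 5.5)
The plan is to transfer the finite-dimensional estimate of Proposition~\ref{prop:finite-shift-test} to the infinite right shift \(S\) by compressing to the initial segments \(E_n=[e_1,\ldots,e_n]\).

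First, \(S\) is an isometry. For finitely supported \(x\), \(1\)-subsymmetry gives \(\|Sx\|=\|x\|\). Hence \(\|S\|=1\) and \(V(S,\calB(X))\subseteq\overline\D\). For the Rudin--Shapiro polynomial \(q_n\) from the preliminaries this yields
\[
\sup_{z\in V(S)}|q_n(z)|\leqslant\Delta\sqrt n .
\]

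Next we bound \(\|q_n(S)\|\) from below by the same two tests used for \(J_n\). Applied to \(e_1\),
\[
q_n(S)e_1=\sum_{k=0}^{n-1}\eps_ke_{k+1},
\]
and \(1\)-unconditionality (which follows from \(1\)-subsymmetry) gives norm \(\phi_X(n)\). For the dual test, let \(P_n\) be the basis projection onto \(E_n\); it has norm one by suppression unconditionality. Then \(P_nS^kP_n=J_n^k\) on \(E_n\), so \(P_nq_n(S)|_{E_n}=q_n(J_n)\), and therefore \(\|q_n(S)\|\geqslant\|q_n(J_n)\|\). The proof of Proposition~\ref{prop:finite-shift-test} already gives \(\|q_n(J_n)\|\geqslant\max\{\phi_X(n),n/\phi_X(n)\}\). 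This yields
\[
\frac{\|q_n(S)\|}{\sup_{V(S)}|q_n|}\geqslant\Delta^{-1}\max\left\{\frac{\phi_X(n)}{\sqrt n},\frac{\sqrt n}{\phi_X(n)}\right\},
\]
and the hypothesis makes the right-hand side unbounded, so \(\Psi(S,\calB(X))=\infty\).

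For the final assertion we need the spatial version. Since \(W_X(T)\subseteq V(T,\calB(X))\), any constant \(C\) for \(W_X\) would hold with \(\sup_{W_X(T)}|p|\leqslant\sup_{V(T)}|p|\). At first sight this is the wrong direction: a bound relative to the smaller set \(W_X\) is a stronger statement than one relative to \(V\). I will therefore check directly that \(\overline{\conv}\,W_X(S)\supseteq\overline\D\), or simply argue with \(\sup_{W_X(S)}|q_n|\leqslant\sup_{\overline\D}|q_n|\leqslant\Delta\sqrt n\). The latter works, because \(W_X(S)\subseteq V(S)\subseteq\overline\D\) and we only need an upper bound for the denominator. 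So the same polynomials \(q_n\) and the same operator \(S\) defeat any constant \(C\) for both numerical ranges.

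The only point needing care is the compression identity \(P_nS^kP_n=J_n^k\). It holds because \(S\) raises indices, so \(P_nS^ke_j=e_{j+k}\) when \(j+k\leqslant n\) and \(0\) otherwise. Since the argument is a short transfer from the finite case, I do not expect a genuine obstacle.
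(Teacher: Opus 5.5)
Your proposal is correct and is essentially the paper's proof. You use that $S$ is an isometry, so $V(S)\subseteq\overline\D$; you test the Rudin--Shapiro polynomials $q_n$ on $e_1$ and dually on $e_n^*$ to get $\|q_n(S)\|\geqslant\max\{\phi_X(n),n/\phi_X(n)\}$; and you obtain the spatial statement from $W_X(S)\subseteq V(S)$, since only an upper bound on the denominator is needed. The one cosmetic difference is that you get the dual bound through the compression $P_nq_n(S)|_{E_n}=q_n(J_n)$ and Proposition~\ref{prop:finite-shift-test}, whereas the paper computes $q_n(S)^*e_n^*=\sum_{k=0}^{n-1}\eps_ke_{n-k}^*$ directly on $X$.
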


\begin{proof}
The shift is an isometry, so $V(S,\calB(X))\subseteq\overline\D$.  Use the fixed Rudin--Shapiro polynomials $q_n(z)=\sum_{k=0}^{n-1}\eps_kz^k$ from the preliminaries, with $\|q_n\|_{\overline\D}\leqslant\Delta\sqrt n$.  Since the basis is $1$-unconditional,
\[
       \|q_n(S)e_1\|=\left\|\sum_{k=0}^{n-1}\eps_ke_{k+1}\right\|=\phi_X(n).
\]
On the other hand, the coordinate functionals have norm one and
\[
       q_n(S)^*e_n^*=\sum_{k=0}^{n-1}\eps_ke_{n-k}^*,
\]
and its duality pairing with $\sum_{k=0}^{n-1}\eps_ke_{n-k}$ is $n$, while the latter vector has norm $\phi_X(n)$.  Hence
\[
       \|q_n(S)\|\geqslant \max\left\{\phi_X(n),\frac{n}{\phi_X(n)}\right\}.
\]
Therefore
\[
       \frac{\|q_n(S)\|}{\sup_{z\in V(S,\calB(X))}|q_n(z)|}
       \geqslant \Delta^{-1}\max\left\{\frac{\phi_X(n)}{\sqrt n},\frac{\sqrt n}{\phi_X(n)}\right\},
\]
which is unbounded by hypothesis.  This proves $\Psi(S,\calB(X))=\infty$.  Since $W_X(S)\subseteq V(S,\calB(X))$, the same polynomials also rule out the spatial-numerical-range estimate.
\end{proof}

We use the following non-degenerate convention.  An Orlicz function is a
convex non-decreasing function \(M:[0,\infty)\to[0,\infty)\) such that
\(M(0)=0\), \(M(t)>0\) for \(t>0\), and \(M(t)\to\infty\) as
\(t\to\infty\).  Its generalised inverse is
\[
       M^{-1}(s)=\inf\{t\geqslant0:M(t)\geqslant s\}.
\]

\begin{lemma}\label{lem:orlicz-inverse}
Let \(M\) be an Orlicz function and let \(\ell_M\) carry the Luxemburg norm.
Then
\[
       \left\|\sum_{j=1}^n e_j\right\|_{\ell_M}
       =\frac1{M^{-1}(1/n)}.
\]
Moreover, the following conditions are equivalent:
\begin{enumerate}
\item[(i)] \(M^{-1}(1/n)\asymp n^{-1/2}\) as \(n\to\infty\);
\item[(ii)] \(M^{-1}(s)\asymp s^{1/2}\) as \(s\downarrow0\);
\item[(iii)] \(M(t)\asymp t^2\) as \(t\downarrow0\).
\end{enumerate}
\end{lemma}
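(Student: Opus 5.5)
The norm formula comes straight from the definition of the Luxemburg norm, and the equivalences are elementary comparisons of functions. For \(x=\sum_{j=1}^n e_j\) we have
\[
       \|x\|_{\ell_M}=\inf\{\lambda>0:\ nM(1/\lambda)\leqslant1\}.
\]
Writing \(t=1/\lambda\), this becomes \(1/\sup\{t>0:M(t)\leqslant1/n\}\). A continuity remark then identifies \(\sup\{t:M(t)\leqslant s\}\) with \(M^{-1}(s)\). Indeed, \(M\) is convex and finite, hence continuous, and it is non-decreasing. The set \(\{t:M(t)\leqslant s\}\) is therefore a closed interval \([0,t_s]\) with \(M(t_s)=s\), since \(M(t)\to\infty\) and \(s>0\). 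Strict positivity plus convexity with \(M(0)=0\) makes \(M\) strictly increasing once it is positive, because \(M(t)/t\) is non-decreasing and positive. Hence \(\inf\{t:M(t)\geqslant s\}=t_s\) as well. This yields \(\|x\|=1/M^{-1}(1/n)\).

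For (ii)\(\Rightarrow\)(i), restrict to \(s=1/n\). For (i)\(\Rightarrow\)(ii), interpolate. Given small \(s\), choose \(n\) with \(1/(n+1)<s\leqslant1/n\). Monotonicity of \(M^{-1}\) gives
\[
       M^{-1}(1/(n+1))\leqslant M^{-1}(s)\leqslant M^{-1}(1/n),
\]
and both ends are \(\asymp n^{-1/2}\asymp s^{1/2}\), since \(n\) and \(n+1\) are comparable.

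For (ii)\(\Leftrightarrow\)(iii), use that on a neighbourhood of \(0\) the map \(M\) is a strictly increasing continuous bijection onto \([0,\varepsilon)\), with inverse \(M^{-1}\). Suppose \(c s^{1/2}\leqslant M^{-1}(s)\leqslant C s^{1/2}\) for small \(s\). Substituting \(s=M(t)\) gives \(c M(t)^{1/2}\leqslant t\leqslant CM(t)^{1/2}\), that is \(t^2/C^2\leqslant M(t)\leqslant t^2/c^2\) for small \(t\). The converse runs the same substitution backwards with \(t=M^{-1}(s)\).

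The only step needing care is the identification of the generalised inverse with the genuine inverse. This requires strict monotonicity of \(M\) near \(0\), which follows from the non-degenerate convention (\(M(t)>0\) for \(t>0\), together with convexity). Everything else is routine.
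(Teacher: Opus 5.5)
Your proof is correct and follows the paper's argument essentially step for step. The shared steps are: strict monotonicity from convexity and \(M(0)=0<M(t)\), identification of the generalised inverse with the ordinary inverse, the Luxemburg computation \(nM(1/\lambda)\leqslant1\), the sandwich \(1/(n+1)<s\leqslant1/n\), and the substitutions \(s=M(t)\) and \(t=M^{-1}(s)\). One small slip: finiteness and convexity alone give continuity only on \((0,\infty)\). Continuity at \(0\), which you need so that \(\{t:M(t)\leqslant s\}\) contains a neighbourhood of \(0\), comes from \(M(t)\leqslant tM(1)\) for \(0\leqslant t\leqslant1\), as the paper observes.
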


\begin{proof}
Being finite-valued and convex, $M$ is continuous on $(0,\infty)$; moreover,
convexity and $M(0)=0$ give $M(t)\leqslant tM(1)$ for $0\leqslant t\leqslant1$,
so $M$ is continuous on $[0,\infty)$.  Convexity and the condition $M(t)>0$
for $t>0$ also imply that $M$ is strictly increasing: if $0<s<t$, then
$M(s)\leqslant(s/t)M(t)<M(t)$.  Thus $M^{-1}$ is the ordinary continuous
inverse, although the generalised-inverse notation remains convenient.  For \(x_n=\sum_{j=1}^n e_j\), the defining inequality for the Luxemburg norm is
\[
       nM(1/\lambda)\leqslant1.
\]
The least admissible \(\lambda\) is therefore \(1/M^{-1}(1/n)\), which proves the first assertion.

The implication \textup{(ii)}\(\Rightarrow\)\textup{(i)} is immediate.  Conversely,
choose \(n\) so that \(1/(n+1)<s\leqslant1/n\).  Monotonicity gives
\[
       M^{-1}(1/(n+1))\leqslant M^{-1}(s)
       \leqslant M^{-1}(1/n),
\]
and
\((n+1)^{-1/2}\asymp s^{1/2}\asymp n^{-1/2}\); hence
\textup{(i)} implies \textup{(ii)}.  Finally, suppose that
\[
       c s^{1/2}\leqslant M^{-1}(s)\leqslant C s^{1/2}
\]
for small \(s\).  Taking \(s=M(t)\) and using
\(M^{-1}(M(t))=t\) gives
\[
       C^{-2}t^2\leqslant M(t)\leqslant c^{-2}t^2
\]
for small \(t\).  Conversely, if
\(a t^2\leqslant M(t)\leqslant b t^2\) near zero, put
\(t=M^{-1}(s)\) to obtain
\[
       b^{-1/2}s^{1/2}\leqslant M^{-1}(s)
       \leqslant a^{-1/2}s^{1/2}.
\]
This proves the equivalence of \textup{(ii)} and \textup{(iii)}.
\end{proof}

\begin{corollary}\label{cor:orlicz}
Let \(M\) be an Orlicz function and let \(\ell_M\) be the corresponding
Orlicz sequence space, endowed with the Luxemburg norm
\[
       \|x\|_M=
       \inf\left\{\lambda>0:
       \sum_{j=1}^{\infty}M\!\left(\frac{|x_j|}{\lambda}\right)
       \leqslant1\right\};
\]
see \cite{Musielak} for the standard background.  If \(M\) is not equivalent
to \(t^2\) at zero, then the right shift \(S\) on \(\ell_M\) satisfies
\(\Psi(S,\calB(\ell_M))=\infty\).  In particular, no Crouzeix-type estimate
with a constant independent of \(T\in\calB(\ell_M)\) can hold, either for the
algebraic or for the spatial numerical range.
\end{corollary}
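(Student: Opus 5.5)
The plan is to reduce the corollary to Proposition~\ref{prop:subsym-shift}, applied to the unit vector basis \((e_j)\) of \(\ell_M\).

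First I check the structural hypotheses. The Luxemburg norm depends only on the moduli of the coordinates, and it is invariant under any increasing re-indexing of the support. So \((e_j)\) is \(1\)-unconditional and \(1\)-spreading, hence \(1\)-subsymmetric. It is also normalised, by rescaling \(M\) if necessary so that \(M(1)=1\).

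One subtlety must be handled here. If \(M\) is not of type \(\Delta_2\) at zero, \((e_j)\) need not span \(\ell_M\); it spans only the closed subspace \(h_M\). In that case I still work with the right shift \(S\) on \(\ell_M\), which is an isometry of \(\ell_M\) since the norm is shift invariant. The proof of Proposition~\ref{prop:subsym-shift} uses only three facts:
\begin{itemize}
\item \(\|S\|\leqslant1\), which gives \(V(S)\subseteq\overline\D\);
\item the evaluation \(\|q_n(S)e_1\|=\phi(n)\), a computation inside \(c_{00}\);
\item the duality bound \(\|q_n(S)\|\geqslant n/\phi(n)\), obtained by testing \(q_n(S)\) on the finitely supported vector \(\sum_k\eps_ke_{n-k}\) against the coordinate functional \(e_n^*\), which has norm one on \(\ell_M\).
\end{itemize}
All three survive verbatim, so the estimate
\[
\Psi(S,\calB(\ell_M))\geqslant\Delta^{-1}\max\{\phi(n)/\sqrt n,\sqrt n/\phi(n)\}
\]
holds in either case.

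Next, Lemma~\ref{lem:orlicz-inverse} gives \(\phi(n)=1/M^{-1}(1/n)\). It also shows that ``\(M\) not equivalent to \(t^2\) at zero'' (the negation of (iii)) is equivalent to the negation of (i): \(M^{-1}(1/n)\not\asymp n^{-1/2}\). The latter says exactly that \(\phi(n)/\sqrt n\) is unbounded above or bounded away from zero fails, which means
\[
\sup_n\max\{\phi(n)/\sqrt n,\sqrt n/\phi(n)\}=\infty.
\]
This is precisely the hypothesis of Proposition~\ref{prop:subsym-shift}, so \(\Psi(S,\calB(\ell_M))=\infty\).

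The final sentence of the corollary then follows as in that proposition. A uniform Crouzeix-type estimate over \(\calB(\ell_M)\) would bound \(\Psi(S)\). Since \(W(S)\subseteq V(S)\), the spatial version fails as well.

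The only step needing care is the \(h_M\) versus \(\ell_M\) issue. The fix is to rerun the finitely supported computations rather than invoke the proposition as a black box.
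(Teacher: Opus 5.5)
Your proposal is correct and is essentially the paper's proof: the paper also runs the Rudin--Shapiro shift test directly on \(\ell_M\), explicitly so that density of \(c_{00}\) is not needed, and obtains \(\Psi(S,\calB(\ell_M))\geqslant\Delta^{-1}\max\{\phi_M(n)/(a_1\sqrt n),\,a_1\sqrt n/\phi_M(n)\}\); it then concludes with Lemma~\ref{lem:orlicz-inverse}. The only difference is that the paper carries \(a_1=\phi_M(1)\), with \(\|e_j^*\|=1/a_1\), rather than normalising; if you do normalise, pass from \(M(t)\) to \(M(M^{-1}(1)\,t)\), which multiplies the Luxemburg norm by a constant and therefore changes neither \(\Psi(S,\calB(\ell_M))\) nor the hypothesis (dividing \(M\) by \(M(1)\) would in general give only an equivalent norm).
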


\begin{proof}
We give the direct shift test, so no density of \(c_{00}\) in \(\ell_M\) is
required.  The right shift is an isometry and therefore
\(V(S)\subseteq\overline\D\).  Put
\[
       \phi_M(n)=\left\|\sum_{j=1}^n e_j\right\|_{\ell_M},
       \qquad
       a_1=\phi_M(1)>0.
\]
Lemma~\ref{lem:orlicz-inverse} gives
\[
       \phi_M(n)=\frac1{M^{-1}(1/n)}.
\]
For the fixed Rudin--Shapiro polynomial
\(q_n(z)=\sum_{k=0}^{n-1}\eps_kz^k\), symmetry of the Luxemburg norm gives
\[
       \|q_n(S)\|
       \geqslant\frac{\phi_M(n)}{a_1}.
\]
The norm of every coordinate functional is exactly \(1/a_1\).  Indeed, the
coordinate projection is contractive, so
\[
       |x_j|a_1=\|x_je_j\|_M\leqslant\|x\|_M,
\]
which gives \(\|e_j^*\|\leqslant1/a_1\), while equality follows by evaluating
\(e_j^*\) at \(e_j/a_1\).  Since
\[
       q_n(S)^*e_n^*=\sum_{k=0}^{n-1}\eps_ke_{n-k}^*,
\]
its duality pairing with \(\sum_{k=0}^{n-1}\eps_ke_{n-k}\) is \(n\), and the
latter vector has norm \(\phi_M(n)\).  Consequently,
\[
       \|q_n(S)\|\geqslant a_1\frac{n}{\phi_M(n)}.
\]
It follows that
\[
       \frac{\|q_n(S)\|}{\sup_{V(S)}|q_n|}
       \geqslant \Delta^{-1}
       \max\left\{
       \frac{\phi_M(n)}{a_1\sqrt n},
       \frac{a_1\sqrt n}{\phi_M(n)}
       \right\}.
\]
If this expression were bounded, then
\(M^{-1}(1/n)\asymp n^{-1/2}\).  Lemma~\ref{lem:orlicz-inverse} would imply
\(M(t)\asymp t^2\) near zero, contrary to the hypothesis.  Hence the displayed
quotient is unbounded and \(\Psi(S,\calB(\ell_M))=\infty\).  Since the spatial
numerical range is contained in the algebraic numerical range, the same
polynomials also rule out the spatial estimate.
\end{proof}

\begin{remark}
The same criterion applies to any normalised $1$-symmetric basis and, more generally, to any normalised $1$-subsymmetric basis.  It provides a common sufficient obstruction whenever the fundamental function is not of Hilbertian order $\sqrt n$ up to constants.  It is not a characterisation: when $\phi_X(n)\asymp\sqrt n$, the shift test is silent, including for sequence spaces whose fundamental function is Hilbertian but whose norm is not induced by an inner product.  Additional multiplier information would be needed to treat that borderline class.
\end{remark}

\appendix
\section{Quantitative and bounded-order complements}\label{sec:appendix-quantitative}

The results in this appendix are not needed for the qualitative fixed-degree
theorem.  They record the explicit cubic bookkeeping and a direct estimate for
root-of-unity relations.

\subsection{The cubic constant}

The recursive bound in Theorem~\ref{thm:all-degrees} is explicit but extremely large.  In degree three the three possible root-multiplicity patterns can be tracked much more efficiently.

\begin{theorem}\label{thm:degree-three}
For the degree-three constant one has
\[
       5\leqslant\Gamma_3<184\,267.
\]
\end{theorem}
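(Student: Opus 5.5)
The lower bound \(5\leqslant\Gamma_3\) is Proposition~\ref{prop:gamma-n-lower} with \(n=3\). For the upper bound I would not run the general recursion \eqref{eq:recursive-upper-bound}, which is astronomically large. Instead I split according to the root-multiplicity pattern of the cubic minimal polynomial: one triple root, one double and one simple root, or three distinct roots. In each case I bound the constant separately and take the maximum. Lemma~\ref{lem:left-regular} lets me work with \(T\in\calB(X)\) throughout.

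\emph{Triple root.} After an affine change \(T^3=0\), and Proposition~\ref{prop:nilpotents} gives \(N_3=1+\delta_3^{-1}+\delta_3^{-2}\) with \(\delta_3=(\eta/2)(\eta/4)\), \(\eta=(\log2)/5\). Numerically \(\delta_3^{-1}\approx 1/(0.0693\cdot0.0347)\approx 416\), so \(N_3\approx1.7\times10^5\). This is below \(184\,267\), and I expect it to be the dominant case, so the stated constant is presumably this number or close to it. I would first compute \(N_3\) exactly and check that it is less than \(184\,267\).

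\emph{Two or three distinct roots.} Normalise the two farthest roots to \(0\) and \(1\), so all roots lie in \(\overline\D\).

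\emph{Large norm, \(M>R_3\).} Lemma~\ref{lem:general-large-calculus} applies, but \(E_3=1+2(4/\delta_3)+2(4/\delta_3)^2\) is about \(5.5\times10^6\), which is too large. So I would sharpen it with a better large-norm estimate. Either I use the disc \(\delta_3 M\overline\D\) with the exact Cauchy bound \(|d_j|\leqslant K\varrho/(\varrho-1)^{j+1}\) and \(\|Q_j(T)\|\leqslant (M+1)^j\), letting \(M\to\infty\) so that the ratio \((M+1)/(\varrho-1)\) tends to \(\delta_3^{-1}\) rather than \(4/\delta_3\). Or I handle moderate \(M\) by a Pick/Schwarz-type interpolation estimate, as in the degree-two proof.

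\emph{Bounded norm.} Split off a simple root with a Bézout projection, reducing to degree two, where Theorem~\ref{thm:degree-two} supplies \(G_2<4.55\). The splitting cost can be computed explicitly for a cubic: the idempotent is \(\prod(T-\mu)/\prod(\lambda-\mu)\) and the separation is at least \(1/2\) by Lemma~\ref{lem:root-partition}. This gives an explicit bound of the form \((1+\text{const}\cdot M^2)\cdot G_2\). The threshold between the bounded and large regimes must then be optimised.

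\emph{Main obstacle.} The hard step is the bookkeeping that keeps every non-nilpotent case below \(N_3\). Chiefly this means making the large-norm constant scale like \(\delta_3^{-2}\) with coefficient at most \(1\), matching the nilpotent case, instead of \(32\delta_3^{-2}\). I would try to extract the top divided-difference term exactly in the limit regime, and choose the cut-off radius so that both the splitting bound and the improved Cauchy bound fall under \(184\,267\).
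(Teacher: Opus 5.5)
Your case split by multiplicity pattern and your treatment of the triple root (Proposition~\ref{prop:nilpotents}, \(N_3=1+\delta^{-1}+\delta^{-2}<174\,029\)) agree with the paper. The gap is the bounded-norm step: as you propose it, it cannot reach \(184\,267\), and no choice of cut-off rescues it.

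Splitting off a simple root by the idempotent \(P=r(T)/r(\lambda)\) costs \(\|P\|\leqslant(M+1)^2/|r(\lambda)|\leqslant4(M+1)^2\). In the pattern \(T^2(T-I)=0\) one has \(\|P\|=\|T^2\|\leqslant M^2\). Either way your bound is quadratic in \(M\), roughly \(4(1+C_2)M^2K\approx22M^2K\) for distinct roots.

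The large-norm regime cannot start early enough to compensate. Lemma~\ref{lem:general-large-disc} only applies for \(M>B_3=8\gamma^{-2}\approx6\,660\). For three distinct roots, even the sharpest available Cauchy bound \(1+M/(\delta M-1)+M(M+1)/(\delta M-1)^2\) decreases to \(N_3\) slowly and stays above \(184\,267\) until \(M\approx1.4\times10^4\). At any admissible threshold your bounded-norm estimate is therefore at least of order \(10^8\).

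Your guess about which case dominates is also off. The stated constant is the bound \(1+D(2M_*+1)<184\,267\), with \(D=C_2+1<5.55\) and \(M_*=16\,600\); it comes from the bounded-norm non-nilpotent regime. The paper's large-norm bounds at \(M_*\) (below \(178\,499\) and \(183\,106\)) already exceed \(N_3\). So keeping every non-nilpotent case below \(N_3\) is neither needed nor achieved, and the ``main obstacle'' you single out is not the real one.

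The missing idea is a bounded-norm estimate that is linear in \(M\). It comes from never bounding a projection norm on its own, only the product of a projection norm with its coefficient. For \(T^2(T-I)=0\), write \(p(T)=p(0)I+(p(1)-p(0))P+p'(0)N\) with \(P=T^2\) and \(N=T-T^2\). Take \(x\) almost norming \(P\), and put \(y=Px/\|Px\|\), \(v=Tx\). Then \([v,y]\) is \(T\)-invariant, \(E=T|_{[v,y]}\) is idempotent, and \((p(E)-p(0)I)v=(p(1)-p(0))\|Px\|\,y\). Lemma~\ref{lem:invariant-restriction} and Theorem~\ref{thm:degree-two} then give \(|p(1)-p(0)|\,\|P\|\leqslant DKM\). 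Likewise \(v=(I-T)x\), \(y=Nx/\|Nx\|\) gives a square-zero restriction and \(|p'(0)|\,\|N\|\leqslant DK(M+1)\). Together these yield \(\|p(T)\|\leqslant[1+D(2M+1)]K\).

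For three distinct roots the same device works with \(p(T)=p(\lambda)I+(p(0)-p(\lambda))P_0+(p(1)-p(\lambda))P_1\). It uses the identities \((T-\lambda I)(T-I)=\lambda P_0\) and \(T(T-\lambda I)=(1-\lambda)P_1\), and no root separation enters at all.

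A smaller point concerns your large-norm refinement. With the generic bounds \(\|Q_j(T)\|\leqslant(M+1)^j\) and \(|d_j|\leqslant K\varrho/(\varrho-1)^{j+1}\), it still gives about \(1.87\times10^5\) at \(M=16\,600\). The paper orders the Newton nodes as \((0,0,1)\), resp.\ \((0,\lambda,1)\), so that \(Q_1(T)=T\) and \(|z|=\varrho\) exactly on the contour. That is what lets the two regimes meet below \(184\,267\).
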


\begin{proof}
The lower bound is Proposition~\ref{prop:gamma-n-lower}.  Put
\[
       C_2=\sqrt{1+(2\mathrm e-1)^2},
       \qquad
       D=C_2+1.
\]
The constant \(D\) has a simple bookkeeping meaning.  On each invariant
quadratic restriction Theorem~\ref{thm:degree-two} costs at most \(C_2K\),
and isolating the relevant interpolation coefficient requires subtracting one
scalar value of modulus at most \(K\).  Thus \(D=C_2+1\) is the natural
one-step loss when the degree-two calculus is inserted into the cubic proof;
no optimisation of this loss is intended.  Put
\[
       \eta=\frac{\log2}{5},
       \qquad
       \gamma=\frac{\eta}{4},
       \qquad
       \delta=\frac{\eta\gamma}{2}
       =\frac{(\log2)^2}{200}.
\]
Let \(M_*=16\,600\).  We work with an operator \(T\), obtained from the original algebra element by left multiplication and an affine change of variables, and put
\[
       K=\sup_{z\in V(T)}|p(z)|>0,
       \qquad
       M=\|T\|.
\]
Elements of degree at most two are already controlled by \(C_2\), so only the three cubic multiplicity patterns need consideration.

\smallskip
\noindent\emph{Triple root.}
After a translation, \(T^3=0\).  The proof of Proposition~\ref{prop:nilpotents}, with \(m=3\), gives
\[
       \delta M\overline\D\subseteq V(T).
\]
Since
\[
       p(T)=p(0)I+p'(0)T+\frac{p''(0)}2T^2,
\]
Cauchy's estimates on this disc yield
\[
       \|p(T)\|
       \leqslant
       \left(1+\delta^{-1}+\delta^{-2}\right)K.
\]

\smallskip
\noindent\emph{One double root and one simple root.}
After an affine change,
\[
       T^2(T-I)=0.
\]
Set
\[
       P=T^2,
       \qquad
       N=T-T^2.
\]
Then \(P^2=P\), \(N^2=0\), and \(PN=NP=0\), while
\begin{equation}\label{eq:cubic-double-functional}
       p(T)=p(0)I+\bigl(p(1)-p(0)\bigr)P+p'(0)N.
\end{equation}
Assume first that \(M\leqslant M_*\).  To estimate the projection term, choose \(x\) with \(\|x\|=1\) and \(\mu=\|Px\|>0\), put
\[
       v=Tx,
       \qquad
       y=\frac{Px}{\mu},
\]
and let \(E=T|_{[v,y]}\).  Since \(T^3=T^2\),
\[
       Tv=Px=\mu y,
       \qquad
       Ty=y.
\]
Thus \([v,y]\) is invariant and \(E^2=E\).  Consequently,
\[
       (p(E)-p(0)I)v=(p(1)-p(0))\mu y.
\]
Lemma~\ref{lem:invariant-restriction} and Theorem~\ref{thm:degree-two} give
\(\|p(E)\|\leqslant C_2K\).  Since \(|p(0)|\leqslant K\) and
\(\|v\|\leqslant M\),
\[
       |p(1)-p(0)|\mu
       \leqslant (\|p(E)\|+|p(0)|)\|v\|
       \leqslant DKM.
\]
Letting \(\mu\uparrow\|P\|\) gives
\begin{equation}\label{eq:cubic-P-bound}
       |p(1)-p(0)|\,\|P\|\leqslant DKM.
\end{equation}
If \(N\neq0\), choose \(x\) almost norming \(N\), put
\[
       v=(I-T)x,
       \qquad
       y=\frac{Nx}{\|Nx\|},
\]
and restrict \(T\) to \([v,y]\).  Here
\[
       Tv=Nx=\|Nx\|y,
       \qquad
       Ty=0,
\]
because \(TN=T^2-T^3=0\).  Thus the restriction \(E\) is square-zero and
\[
       (p(E)-p(0)I)v=p'(0)\|Nx\|y.
\]
Since \(\|v\|\leqslant M+1\), the same degree-two estimate gives
\begin{equation}\label{eq:cubic-N-bound}
       |p'(0)|\,\|N\|\leqslant DK(M+1).
\end{equation}
Equations \eqref{eq:cubic-double-functional}--\eqref{eq:cubic-N-bound} imply
\begin{equation}\label{eq:cubic-bounded-common}
       \|p(T)\|\leqslant\bigl[1+D(2M+1)\bigr]K.
\end{equation}

Suppose now that \(M>M_*\).  Since \(B_3=8\gamma^{-2}<6700\), Lemma~\ref{lem:general-large-disc} gives \(\varrho\overline\D\subseteq V(T)\), where \(\varrho=\delta M>1\).  Let
\[
       h(z)=p(0)+p'(0)z+p[0,0,1]z^2
\]
be the Hermite remainder of \(p\) modulo \(z^2(z-1)\).  Then \(p-h\) is
divisible by \(z^2(z-1)\), so \(h(T)=p(T)\).  On \(|z|=\varrho\),
formula~\eqref{eq:cauchy-divided-difference} gives
\[
\begin{split}
       |p'(0)|
       &\leqslant\frac{1}{2\pi}(2\pi\varrho)
       \frac{K}{\varrho^2}=\frac K\varrho,\\
       |p[0,0,1]|
       &\leqslant\frac{1}{2\pi}(2\pi\varrho)
       \frac{K}{\varrho^2(\varrho-1)}
       =\frac{K}{\varrho(\varrho-1)}.
\end{split}
\]
Consequently,
\begin{equation}\label{eq:cubic-double-large}
       \frac{\|p(T)\|}{K}
       \leqslant
       1+\frac1\delta+
       \frac{M}{\delta(\delta M-1)}.
\end{equation}

\smallskip
\noindent\emph{Three distinct roots.}
Choose a pair of roots of maximal distance and normalise them to \(0\) and \(1\).  The third root \(\lambda\) then satisfies
\[
       |\lambda|\leqslant1,
       \qquad
       |1-\lambda|\leqslant1,
\]
and
\[
       T(T-I)(T-\lambda I)=0.
\]
Let \(P_0,P_1,P_\lambda\) be the spectral projections.  We use
\begin{equation}\label{eq:cubic-distinct-functional}
       p(T)=p(\lambda)I+
       \bigl(p(0)-p(\lambda)\bigr)P_0+
       \bigl(p(1)-p(\lambda)\bigr)P_1.
\end{equation}
Assume first that \(M\leqslant M_*\).  Choose \(x\) almost norming \(P_0\), put \(\mu=\|P_0x\|\),
\[
       v=(T-I)x,
       \qquad
       y=\frac{P_0x}{\mu}.
\]
Since
\[
       (T-\lambda I)(T-I)=\lambda P_0,
\]
we have
\[
       Ty=0,
       \qquad
       Tv=\lambda v+\lambda\mu y.
\]
Thus \([v,y]\) is invariant, \(v+\mu y\) is a \(\lambda\)-eigenvector, and the
restriction \(E=T|_{[v,y]}\) has degree at most two.  Decomposing
\(v=(v+\mu y)-\mu y\) into its \(\lambda\)- and \(0\)-eigencomponents gives
\[
       \bigl(p(E)-p(\lambda)I\bigr)v
       =-\bigl(p(0)-p(\lambda)\bigr)\mu y.
\]
Lemma~\ref{lem:invariant-restriction} and Theorem~\ref{thm:degree-two} therefore yield
\begin{equation}\label{eq:cubic-P0-bound}
       |p(0)-p(\lambda)|\,\|P_0\|
       \leqslant DK(M+1).
\end{equation}
For \(P_1\), choose \(x\) almost norming \(P_1\), put
\(\mu=\|P_1x\|\), \(y=P_1x/\mu\), and \(v=Tx\), and let
\(E=T|_{[v,y]}\).  The identity
\[
       T(T-\lambda I)=(1-\lambda)P_1
\]
gives
\[
       Ty=y,
       \qquad
       Tv=\lambda v+(1-\lambda)\mu y.
\]
Hence \(v-\mu y\) is a \(\lambda\)-eigenvector and
\[
       \bigl(p(E)-p(\lambda)I\bigr)v
       =\bigl(p(1)-p(\lambda)\bigr)\mu y.
\]
Applying the invariant-restriction and degree-two estimates as above, and
then letting \(\mu\uparrow\|P_1\|\), yields
\begin{equation}\label{eq:cubic-P1-bound}
       |p(1)-p(\lambda)|\,\|P_1\|
       \leqslant DKM.
\end{equation}
Together with \eqref{eq:cubic-distinct-functional}, these inequalities again give \eqref{eq:cubic-bounded-common}.

If \(M>M_*\), Lemma~\ref{lem:general-large-disc} gives \(\varrho\overline\D\subseteq V(T)\), where \(\varrho=\delta M\).  The Newton interpolant
\[
       h(z)=p(0)+p[0,\lambda]z+p[0,\lambda,1]z(z-\lambda)
\]
matches \(p\) at \(0,\lambda,1\); hence \(p-h\) is divisible by
\(z(z-\lambda)(z-1)\) and \(h(T)=p(T)\).  On \(|z|=\varrho\),
formula~\eqref{eq:cauchy-divided-difference} gives
\[
\begin{split}
       |p[0,\lambda]|
       &\leqslant\frac{1}{2\pi}(2\pi\varrho)
       \frac K{\varrho(\varrho-1)}
       =\frac K{\varrho-1},\\
       |p[0,\lambda,1]|
       &\leqslant\frac{1}{2\pi}(2\pi\varrho)
       \frac K{\varrho(\varrho-1)^2}
       =\frac K{(\varrho-1)^2}.
\end{split}
\]
Since \(\|T(T-\lambda I)\|\leqslant M(M+1)\),
\begin{equation}\label{eq:cubic-distinct-large}
       \frac{\|p(T)\|}{K}
       \leqslant
       1+\frac{M}{\delta M-1}
       +\frac{M(M+1)}{(\delta M-1)^2}.
\end{equation}

It remains only to record the numerical bounds.  We have \(D<5.55\) and \(\delta>0.0024\).  Hence, for \(M\leqslant M_*\),
\[
       1+D(2M+1)<184\,267.
\]
The triple-root estimate is smaller than \(174\,029\).  The relevant
large-norm expressions are decreasing.  Indeed,
\[
 \frac{d}{dM}\frac{M}{\delta(\delta M-1)}
 =-\frac{1}{\delta(\delta M-1)^2}<0,
\]
and
\[
 \frac{d}{dM}\left(
       \frac{M}{\delta M-1}
       +\frac{M(M+1)}{(\delta M-1)^2}
       \right)
 =-\frac{1}{(\delta M-1)^2}
  -\frac{(\delta+2)M+1}{(\delta M-1)^3}<0
\]
whenever \(\delta M>1\).  Evaluating the right-hand sides of
\eqref{eq:cubic-double-large} and \eqref{eq:cubic-distinct-large} at
\(M=M_*\), respectively, and using \(\delta>0.0024\), gives
\[
\begin{aligned}
  1+\frac1\delta+
  \frac{M_*}{\delta(\delta M_*-1)}
  &<178\,499,\\
  1+\frac{M_*}{\delta M_*-1}
  +\frac{M_*(M_*+1)}{(\delta M_*-1)^2}
  &<183\,106.
\end{aligned}
\]
Thus \eqref{eq:cubic-double-large} gives the first bound in the case of
one double and one simple root, whereas
\eqref{eq:cubic-distinct-large} gives the second bound in the case of
three distinct roots.  Consequently every quotient in the definition of
\(\Gamma_3\) is strictly smaller than \(184\,267\), as required.
\end{proof}

\subsection{Roots of unity}

The cyclic analogue of the ratio-drop argument gives a more explicit estimate for every fixed root-of-unity relation.

\begin{proposition}\label{prop:roots}
For every integer $m\geqslant2$ there is a constant $D_m$ such that
\[
       \Psi(U,\calA)\leqslant D_m
\]
whenever $U^m=1$ in a unital Banach algebra $\calA$.
\end{proposition}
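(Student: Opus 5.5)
Since \(U^m=1\), the minimal polynomial of \(U\) divides \(z^m-1\), so \(U\) is algebraic of degree at most \(m\) and all its roots lie on \(\T\subseteq\overline\D\).  The quickest route is therefore to invoke Theorem~\ref{thm:all-degrees} and take \(D_m=G_m\).  The affine normalisation in that proof is harmless, because \(\Psi\) is affine invariant.  The point of the proposition, however, is a more explicit constant obtained from a cyclic version of the ratio-drop argument, and that is the plan I will follow.  By Lemma~\ref{lem:left-regular} we work with \(U\in\calB(X)\) and put \(M=\|U\|\) and \(K=\sup_{V(U)}|p|>0\).

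\emph{Bounded-norm case \(M\leqslant B\),} with \(B=B_m\) the constant of Lemma~\ref{lem:general-large-disc}.  The relation \(U^m=1\) gives the discrete Fourier decomposition
\[
       p(U)=\sum_{\omega^m=1}p(\omega)P_\omega,
       \qquad
       P_\omega=\frac1m\sum_{k=0}^{m-1}\overline\omega^{\,k}U^k .
\]
Every \(\omega\) occurring with \(P_\omega\neq0\) lies in \(\sigma(U)\subseteq V(U)\), so \(|p(\omega)|\leqslant K\).  Since \(\|U^k\|\leqslant M^k\), we have \(\|P_\omega\|\leqslant S_m(M)/m\), and hence
\[
       \|p(U)\|\leqslant S_m(B)\,K .
\]
This replaces the B\'ezout induction by the explicit cyclic projections, with no loss from cluster separation.

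\emph{Large-norm case \(M>B\).}  Here I apply Lemma~\ref{lem:general-large-disc} with \(q(z)=z^m-1\) (or the minimal polynomial), whose roots lie in \(\overline\D\).  This gives \(\delta_mM\,\overline\D\subseteq V(U)\).  Lemma~\ref{lem:general-large-calculus} then bounds \(\Psi\) by \(E_m\), once \(M\) also exceeds \(2/\delta_m\).  Taking the threshold to be \(R_m\) in place of \(B_m\) in the first case, I obtain
\[
       D_m=\max\{S_m(R_m),\,E_m\}.
\]

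The cyclic analogue, as suggested in the text, would sharpen the first case as follows.  Take \(x_0\) almost norming \(U\) and consider the orbit ratios \(R_j=\|U^{j+1}x_0\|/\|U^jx_0\|\).  Their product over one period equals
\[
       \frac{\|U^mx_0\|}{\|x_0\|}=1,
\]
so a large \(R_0\approx M\) forces a drop \(R_{k+1}\leqslant\gamma R_k\) with \(R_k\gtrsim\gamma^{m-2}M\).  The two-vector argument of Proposition~\ref{prop:nilpotents} then applies verbatim, with \(v=x_k/\|x_k\|\) and \(w=x_{k+1}/\|x_{k+1}\|\).  The main obstacle is only checking that \(v\) and \(w\) are linearly independent, which holds because \(R_k>1\) while \(\sigma(U)\subseteq\T\).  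It also requires that the threshold be chosen so that \(\gamma^{m-2}M>1\).  Both points mirror Lemma~\ref{lem:general-large-disc}, so I expect no genuine difficulty; the remaining work is bookkeeping of the constants.
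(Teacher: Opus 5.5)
Your proposal is correct. Your opening remark already proves the statement: $U^m=1$ gives algebraic degree at most $m$, hence $\Psi(U,\calA)\leqslant\Gamma_m\leqslant G_m$ by Theorems~\ref{thm:degree-two} and~\ref{thm:all-degrees}, with no circularity. Your bounded-norm case coincides with the paper's. It uses the Fourier projections $P_\omega$, the implication $P_\omega\neq0\Rightarrow\omega\in\sigma(U)\subseteq V(U)$, and the bound $K\sum_{j<m}M^j$.

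The routes differ in the large-norm case. You feed $z^m-1$ into Lemma~\ref{lem:general-large-disc} and then quote Lemma~\ref{lem:general-large-calculus}. This is valid: if the minimal polynomial has degree $m'<m$, use that lemma with $m'$, noting that $R_{m'}\leqslant R_m$, $E_{m'}\leqslant E_m$, and that $m'=1$ forces $\|U\|=1$. But it costs the threshold $R_m=B_m=2^m\cdot 2/\delta_m$ and gives the constant $\max\{S_m(R_m),E_m\}$.

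The paper instead uses the cyclic identity $R_0\cdots R_{m-1}=1$ from your last paragraph. This yields the drop without the coefficient bound $\sum_j|c_j|<2^m$, so the threshold falls by the factor $2^m$ to $M_m=\max\{\gamma^{-(m-1)/2},2/\delta_m\}$. In place of the Newton--Hermite calculus, it folds the Taylor coefficients modulo $m$, setting $d_\ell=\sum_s c_{\ell+sm}$. It then applies Cauchy's estimate $|c_j|\leqslant K\varrho^{-j}$ on $\varrho\overline\D$, with $\varrho=\delta_m\|U\|\geqslant2$. This gives $\|p(U)\|\leqslant(1-2^{-m})^{-1}K\sum_{\ell<m}\delta_m^{-\ell}$, much smaller than $E_m=1+2\sum_j(4/\delta_m)^j$.

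Your version buys economy by recycling the general lemmas of Section~3. The paper's version buys the explicit, substantially smaller constant, which is the only reason the proposition is recorded, since it is qualitatively implied by Theorem~\ref{thm:all-degrees}.

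Finally, you flag the linear independence of $v$ and $w$ as the main obstacle, but it is not needed. The triangle inequality gives $f(r)\geqslant r-1>0$ on $[2,7]$, which is all the convexity argument uses, and the paper does not check it.
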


\begin{proof}
We again work in $\calB(X)$.  Put $M=\|U\|$ and fix
\[
       \eta=\frac{\log2}{5},\qquad \gamma=\frac{\eta}{4},\qquad
       \delta_m=\frac{\eta}{2}\gamma^{m-2},
\]
and set
\[
       M_m=\max\left\{\gamma^{-(m-1)/2},\frac{2}{\delta_m}\right\}.
\]
We first prove the disc inclusion for $M>M_m$.  Choose $\eps>0$ so small that $(1-\eps)M>\gamma^{-(m-1)/2}$.  Pick $x_0\in X$ with $\|x_0\|=1$ and $\|Ux_0\|>(1-\eps)M$, put $x_j=U^jx_0$, and write
$R_j=\|x_{j+1}\|/\|x_j\|$ for $0\leqslant j\leqslant m-1$, where $x_m=x_0$.  Then $R_0\cdots R_{m-1}=1$.  If $R_{j+1}>\gamma R_j$ for all $0\leqslant j\leqslant m-2$, then
\[
       1=R_0\cdots R_{m-1}>R_0^m\gamma^{m(m-1)/2}>1,
\]
a contradiction.  Hence there is a first $k\leqslant m-2$ with $R_{k+1}\leqslant\gamma R_k$, and then $R_k\geqslant\gamma^{m-2}(1-\eps)M$.

Set $v=x_k/\|x_k\|$ and $w=x_{k+1}/\|x_{k+1}\|$.  Then $Uv=R_kw$ and $Uw=R_{k+1}u$ for some unit vector $u$.  For $\zeta\in\T$ let $f(r)=\|v+r\zeta w\|$.  On $[2,7]$ we have $f(r)\geqslant r-1>0$, $f(2)\leqslant3$, and $f(7)\geqslant6$.  Since $f$ is convex,
\[
       \int_2^7\frac{f'_+(s)}{f(s)}\,ds
       =\log f(7)-\log f(2)\geqslant\log2,
\]
so for some $r\in[2,7]$ one has $f'_+(r)/f(r)\geqslant\eta$.  Also $r/f(r)\leqslant2$.  Since
\[
       (I+t\zeta U)(v+r\zeta w)=v+(r+tR_k)\zeta w+tr\zeta^2R_{k+1}u,
\]
the triangle inequality gives
\[
       \|I+t\zeta U\|\geqslant
       \frac{f(r+tR_k)-trR_{k+1}}{f(r)}.
\]
Formula \eqref{eq:support} therefore gives
\[
       \sup_{z\in V(U)}\operatorname{Re}(\zeta z)
       \geqslant R_k\frac{f'_+(r)}{f(r)}-R_{k+1}\frac{r}{f(r)}
       \geqslant \eta R_k-2R_{k+1}
       \geqslant \frac{\eta}{2}R_k.
\]
Letting $\eps\downarrow0$ yields $\delta_mM\overline\D\subseteq V(U)$ whenever $M>M_m$.  For these $M$ we also have $\varrho=\delta_mM\geqslant2$.

Put $K=\sup_{V(U)}|p|>0$.  If $p(z)=\sum c_jz^j$, Cauchy's estimate on $\varrho\overline\D$ gives $|c_j|\leqslant K\varrho^{-j}$.  Since $U^m=1$,
\[
       p(U)=\sum_{\ell=0}^{m-1}d_\ell U^\ell,
       \qquad d_\ell=\sum_{s\geqslant0}c_{\ell+sm},
\]
where the sums are finite.  Hence
\[
       \|p(U)\|\leqslant \frac{K}{1-2^{-m}}
       \sum_{\ell=0}^{m-1}\delta_m^{-\ell}.
\]
This proves the desired estimate for $M>M_m$.

It remains to treat $M\leqslant M_m$.  Let $\Lambda_m=\{\lambda\in\C:\lambda^m=1\}$ and, for $\lambda\in\Lambda_m$, put
\[
       P_\lambda=\frac{1}{m}\sum_{j=0}^{m-1}\lambda^{-j}U^j .
\]
These are the spectral projections of $U$ corresponding to the roots in $\Lambda_m$; some of them may be zero.  If $M\leqslant M_m$, then
$\|P_\lambda\|\leqslant m^{-1}\sum_{j=0}^{m-1}M_m^j$.  The identity
$p(U)=\sum_{\lambda\in\Lambda_m}p(\lambda)P_\lambda$ follows from the discrete Fourier formula for the remainder modulo $z^m-1$.  Whenever $P_\lambda\neq0$, we have $(U-\lambda I)P_\lambda=0$, so $U-\lambda I$ is not invertible and $\lambda\in\sigma(U)\subseteq V(U)$.  Therefore $|p(\lambda)|\leqslant K$ for all non-zero projections.  Hence
\[
       \|p(U)\|\leqslant K\sum_{j=0}^{m-1}M_m^j .
\]
Combining the bounded and large cases gives a constant $D_m$ depending only on $m$.
\end{proof}

\begin{remark}
Theorem~\ref{thm:all-degrees} settles the fixed-degree problem qualitatively, but the constants produced by the spectral-splitting induction grow very rapidly.  Propositions~\ref{prop:nilpotents} and~\ref{prop:roots} remain useful because they give direct and substantially more explicit bounds for two important classes of relations.  No optimisation of the general constants is attempted here.
\end{remark}

\end{document}